\title{\bf{Grauert's Approximation Theorem in any Characteristic and Applications}}
\author{
       \bf{Gert-Martin Greuel and Gerhard Pfister}\\
 }
\date{}

\documentclass[11pt]{article}
\usepackage{amsmath, amsthm, amssymb, mathrsfs, amscd, amsthm, wasysym, amscd,amsfonts,graphicx}
\usepackage{indentfirst,url,xypic, xcolor}
\usepackage{lipsum} 
\usepackage[all]{xy}
\usepackage{url}
\usepackage[colorlinks,plainpages]{hyperref}
\usepackage{verbatim}
\usepackage{fancyvrb}
\usepackage{listings}
\usepackage{eucal}				% for fonts \mathcal
\usepackage[T1]{fontenc}
\usepackage[utf8]{inputenc}
\usepackage{indentfirst,url,xypic}
\usepackage{lipsum}
\usepackage{faktor}
\usepackage{url}
\usepackage[colorlinks,plainpages]{hyperref}
\usepackage{verbatim}
\usepackage{fancyvrb}
\usepackage{listings}
\usepackage{mathtools}
\usepackage{algorithm}
\usepackage[noend]{algorithmic}
\usepackage{listings}
\usepackage{enumitem}  		% um itemize auszurichten, etc.
\usepackage{bm} 			% for bold equations (load bm after all other font packages)
\usepackage{blindtext}
\usepackage{multicol}
\usepackage{}				% Include other referenced packages here.

\lstset{
	basicstyle=\ttfamily,
	mathescape
} 
\usepackage[colorlinks,plainpages]{hyperref}
\setlength{\parindent}{15pt}
\hypersetup{
	colorlinks=true,
	linkcolor=blue,
	filecolor=magenta,      
	urlcolor=cyan,
	citecolor=blue
}

\DeclareMathOperator{\rank}{rank}

\DeclareMathOperator{\id}{id}

\DeclareMathOperator{\modular}{mod}

\DeclareMathOperator{\ord}{ord}
\DeclareMathOperator{\order}{ord}

\DeclareMathOperator{\rsim}{\stackrel{r}{\sim}}
\DeclareMathOperator{\csim}{\stackrel{c}{\sim}}
\DeclareMathOperator{\Id}{{\mathbf 1}}
\DeclareMathOperator{\LM}{LM}
\DeclareMathOperator{\LC}{LC}
\DeclareMathOperator{\LT}{LT}
\DeclareMathOperator{\tail}{tail}
\DeclareMathOperator{\NF}{NF}
\DeclareMathOperator{\Spec}{Spec}

\DeclareMathOperator{\Sing}{Sing}

\newtheorem{Definition}{Definition}[section]
\newtheorem{Theorem}[Definition]{Theorem}
\newtheorem{Remark}[Definition]{Remark}
\newtheorem{Proposition}[Definition]{Proposition }
\newtheorem{Corollary}[Definition]{Corollary}

\newtheorem{Lemma}[Definition]{Lemma}

\newcommand{\N}{{\mathbb N}}

\newcommand{\Z}{{\mathbb Z}}
\newcommand{\R}{{\mathbb R}}
\newcommand{\C}{{\mathbb C}}
\newcommand{\Q}{{\mathbb Q}}

   %----> ground field
   %----> ground field
%\newcommand{\LM}{\text{LM}}
%\newcommand{\LC}{\text{LC}}
%\newcommand{\tail}{\text{tail}}
%%%%%
%%% new commands for calligraphic characters
%%%%%

\newcommand{\kq}{{\mathcal Q}}
\newcommand{\kr}{{\mathcal R}}

%%%%
\newcommand{\fm}{\mathfrak{m}}

%\newcommand{\fc}{\mathfrak{c}}

%%%%

\newcommand{\bx}{\boldsymbol{x}}

\newcommand{\beps}{\boldsymbol{\varepsilon}}
\newcommand{\eps}{\varepsilon}

\def\<bx{\langle \bx \rangle}

\hoffset= -0.4 in
\voffset= -0.4 in
\textwidth=426pt

\begin{document}
\maketitle
\section*{Abstract}
In his seminal Inventiones paper from 1972 Grauert proved the existence of a semiuniversal deformation of an arbitrary complex analytic isolated singularity. 
For the proof he invented an approximation theorem for solving a system of "nested" analytic equations, which is now called Grauert's approximation theorem.
To prove this, Grauert introduced standard bases for ideals in power series rings and proved a generalized Weiertra{\ss}  division theorem.  All this was done for convergent power series over the complex numbers.

The purpose of this article is to extend Grauert's division and approximation theorem to convergent power series over arbitrary real valued fields of any characteristic. As an application, which was actually the motivation for this article, we derive  the existence of a semiuniversal deformation for an isolated singularity and a splitting lemma for not necessarily isolated hypersurface singularities over a real valued field in any characteristic. 

\tableofcontents

\section{Introduction} \label{sec:1}
Grauert's paper  \cite{Gr}, "\" Uber die Deformation isolierter Singularit\"aten analytischer Mengen" was a landmark in singularity theory. Not only because of the proof of the existence of a semiuniversal deformation of an isolated singularity but also because of the methods.  Grauert introduced what is now called 'standard bases' for ideals in power series rings and he proved a generalized Weiertra{\ss}  division theorem (division by an ideal). He used this to prove an approximation theorem for solving a system of ”nested” analytic equations, the fundamental result to prove the existence of a semiuniversal deformation.
Semiuniversal deformations of an isolated singularity are a cornerstone in singularity theory and have been studied in numerous cases. Standard bases for power series ideals\,\footnote{Standard bases were independently discovered by Hironaka in his work on the resolution of singularities.} are a local analog of Gr\"obner bases for polynomial ideals, both of which are now indispensable tools for the theoretical and computational study of systems of equations in algebraic and analytic local rings resp. in polynomial rings.  In this article we
extend Grauert's results to power series over a real valued field of any characteristic.
Note that power series over a real valued field include {\em convergent power series over $\R$ and $\C$ and over many other fields} of interest (see Remark \ref{rm.rvf}), as well as {\em formal power series over arbitrary fields}.

The motivation for extending Grauert's division and approximation theorem as well as the existence theorem for a semiuniversal deformation resulted from our work on the classification of singularities in positive characteristic. For this we needed a splitting lemma (originally due to Thom for differentiable functions) that splits a power series of order two in its quadratic part and a part of order three in independent variables, which is  indispensable for the classification of singularities in more than two variables. In \cite{GP25} we proved the splitting lemma, also for non-isolated singularities, for formal  and for convergent real or complex power series.  
The case of convergent power series in positive characteristic was left open, because we needed for the proof the semiuniversal unfolding of a hypersurface, which was only available  for real or complex analytic singularities. 
In this article we provide a proof of Grauert's  approximation theorem over a real valued field and derive from that the existence of a convergent semiuniversal deformation for every isolated singularity.  As a consequence we prove a  general splitting lemma that encompasses all known cases and goes beyond them.

 Grauert invented his approximation theorem, since Artin's "nested" approximation theorem does not hold in the {\em analytic} case.
 There are various Artin approximation theorems, and they are a keystone in algebraic geometry and singularity theory. Here we mention only two of them: the analytic approximation theorem and the 
 nested {\em algebraic} approximation theorem.
 For a detailed discussion on Artin approximation theorems see, for example, \cite{Ha17} and \cite{Ro18}.
 
The methods used to prove Grauert's division and approximation theorems do not differ substantially from the classical methods used in the complex analytic case; indeed, parts of Sections 2, 3, and 5 follow the arguments presented in \cite{GG} and \cite{dJP00}. However, it is not immediately obvious that these methods remain valid in arbitrary characteristic over real valued fields. Establishing this carefully—thereby providing a reliable reference—is one of the primary purposes of our work. For the sake of completeness and clarity, we include the necessary auxiliary lemmas, occasionally referring to \cite{GG} and \cite{dJP00} for proofs where appropriate. In contrast, the applications presented in Section 4 are novel; they build upon the results of Sections 2 and 3 and constitute the primary motivation for our work.\\

Let us first fix some notations. Let $K$ be a field of arbitrary characteristic with a real valuation $| \ | :K \to \R_{\ge 0}$\,\footnote{This means: $ |a| = 0 \Leftrightarrow a=0$, $ |a||b| = |ab|$, and $|a+b| \le |a|+|b|$.}. Let $x=x_1,\ldots,x_n$.
For each \mbox{${\eps}\in
  (\R_{>0})^n$}, we
define a map 
$\|\phantom{f}\|_{\eps}:K[[x]]\to \R_{>0}\cup  \{\infty\}$
   by setting for a formal power series
   \mbox{$f  =  \sum_{{\alpha}\in\N^n}c_{\alpha} {x}^{\alpha} $}, $c_\alpha \in K$, 
   $\alpha=(\alpha_1,...,\alpha_n)$, 
   $x^\alpha=x_1^{\alpha_1}\cdot...\cdot x_n^{\alpha_n}$,
$$\|f\|_{\eps}:= \sum_{{\alpha} \in \N^n} |c_{\alpha}|\cdot
{\eps}^{{\alpha}}\in \R_{\ge 0}\cup \{\infty\}\,.$$
The power series
\mbox{$f  =  \sum_{{\alpha}\in\N^n}c_{\alpha} {x}^{\alpha} $}
is called {\em
convergent}
iff there exists a real vector \mbox{$\eps\in(\R_{>0})^n$}
such that \mbox{$ \| f \|_{\eps} < \infty $}.
We denote by 
$$K\{x\}=K\{x_1,...,x_n\}$$
the {\em convergent (or analytic) power series ring} over $K$. The analytic $K$-algebra 
$K\{ x\}$ is a Noetherian, integral and factorial local ring  with maximal ideal $\fm =  \langle x_1,...,x_n \rangle$, and morphisms being morphisms of local $K$-algebras (see \cite[Section I.1.2]{GLS25}). 
We denote  the {\em order} of $f \in K\{x\}$ 
by 
$$\ord(f) := \max \{l \mid f \in \fm^l\}\text{ if }f \ne 0\text{ and }\ord(0)=\infty.$$

 If  $K$  is complete w.r.t. $| \ |$ (every Cauchy sequence with respect to $| \ |$ converges in $K$) then $K$ is called a {\em complete real valued field}.
Note that $\bar K\{x\}\cap K[[x]]=K\{x\}$, where $\bar K$ denotes the completion of $K$. See Remark \ref{rm.rvf} for examples of real valued fields.

For a geometric interpretation of the results of this paper we mention, that if the valuation $| \ |$ on $K$ is  complete, then any convergent power series 
 $f= \sum c_\alpha x^\alpha \in K\{x\}$, $\|f\|_{\beps}< \infty$, defines an analytic function $P^n_\eps \to K, \ a \mapsto \sum c_\alpha a^\alpha$, with $P^n_\eps$ the policylinder
 $$P^n_\eps: =
\{a=(a_1,...,a_n) \in K^n \mid |a_i| < \eps_i, i =1,...,n\}$$
(see \cite[§1.1, Bemerkung]{GR71}). If $K$ is not complete we can  pass to the algebraic closure $\tilde K$ of the completion $\bar K$ and all statements of this article could be formulated geometrically over $\tilde K$ as in the case $K=\C$ and $| \ |$ the absolute value.\\

We extend $\| \ \|_{\eps}$ to $f=(f_1,\ldots,f_N)\in K[[x ]]^N=
\sum_{i=1}^N K[[x]]e_i$
by 
$$\|f\|_{\eps}:= \sum_{i=1}^N\|f_i\|_{\eps}.$$
Let $B_{\eps}^N=\{f\in K[[x]]^N \text{  } | \text{ } \|f\|_{\eps}<\infty\}$. Then $K\{x\}^N=\cup_{\eps\in\R^n_{>0}} B_{\eps}^N$ and $\| \ \|_{\eps}$ is a norm on $K\{x\}^N$,  satisfying $ \|fg\|_{\eps}\le  \|f\|_{\eps} \|g\|_{\eps}$.
 If $\rho=(\rho_1,\ldots,\rho_n)\leq\eps=(\eps_1,\ldots,\eps_n)$ then\,\footnote{$\rho\leq\eps$ means $\rho_i\leq\eps_i$ for all $i$.} $B_\eps^N\subset B_\rho^N$.\\
  
  In 1968, M. Artin proved that any formal power series solution of a system of {\em analytic} equations can be approximated by convergent power series solutions.

\begin{Theorem}[Analytic Artin Approximation]
\label{thm.approx}
Let $K$ be a real valued field. If the characteristic of $K$ is positive, we assume additionally that $K$ is quasi-complete\,\footnote{$K$ is called  quasi-complete if the completion $\bar K$  of $K$ is a separable field extension of $K$. Note that in characteristic 0 every real valued field is already quasi-complete.
}. 
Let $x=(x_1,\ldots,x_n)$, $y=(y_1,\ldots,y_m)$ and $f=(f_1,\ldots,f_k)\in K\{ x,y\}^k$. 
Let $\bar y(x)\in K[[x]]^m$
be a formal solution of $f=0$,
$$f(x,\bar y(x))=0.$$
Then there exists for any integer $c>0$ a convergent solution $y_c(x)\in K\{ x\}^m$,
$$f(x,y_c(x))=0,$$
such that $$\bar y(x)\equiv y_c(x) \text{ mod } \langle x\rangle^c.$$
\end{Theorem}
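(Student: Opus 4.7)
The plan is to proceed by induction on the number $n$ of $x$-variables, following Artin's classical strategy with the generalized Weierstra{\ss} division theorem established earlier in this paper as the key reduction tool. After the translation $y \mapsto y - \bar y(0)$ we may assume $\bar y(0) = 0$. For the base case $n=1$, the ring $K\{x_1\}$ is a Henselian local ring whose completion is $K[[x_1]]$. When a maximal minor of the Jacobian $(\partial f_i/\partial y_j)$ is a unit at the origin, the analytic implicit function theorem produces a convergent $y_c$, and uniqueness then forces agreement with $\bar y$ to all orders. In the singular case one invokes Artin's \emph{Jacobian trick}: multiply the equations $f=0$ by appropriate minors of the Jacobian $J$, introduce auxiliary variables encoding the corrections $\bar y - y_c$ divided by these minors, and solve the enlarged system by Hensel lifting in $K\{x_1\}$, which succeeds by henselianity.

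For the induction step, split $\bar y(x) = y^{(c)}(x) + \bar r(x)$ with $y^{(c)} \in K[x]^m$ of degree $<c$ and $\bar r(x) \in \langle x\rangle^c$, and substitute $y = y^{(c)}(x) + z$ to obtain an analytic system $\tilde f(x,z) = 0$ whose formal solution $\bar r$ already lies in $\langle x\rangle^c$. After a generic $K$-linear change in the $x$-variables---permissible after a finite extension of $K$ inside $\bar K$ and descent via the identity $\bar K\{x\}\cap K[[x]]=K\{x\}$ recalled in the introduction---we may assume some distinguished element derived from a standard basis of $\langle \tilde f\rangle$ becomes $x_n$-regular. The generalized Weierstra{\ss} preparation and division theorems then let one rewrite each component of the unknown $z$ as a polynomial in $x_n$ of bounded degree whose coefficients are new unknowns over $K\{x_1,\ldots,x_{n-1}\}$; after eliminating $x_n$, the problem becomes an analytic system in $n-1$ variables admitting a formal solution, to which the induction hypothesis applies and yields the desired $y_c(x)$ after back-substitution.

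The main obstacle is the positive-characteristic case, which is exactly what forces the quasi-completeness assumption on $K$. Both the generic coordinate change and the Weierstra{\ss} preparation may demand coefficients in $\bar K$ rather than $K$, so one initially produces a convergent solution only over $\bar K\{x\}^m$; the descent back to $K\{x\}^m$ uses faithfully flat separability of $\bar K/K$, which is precisely quasi-completeness. In characteristic zero separability is automatic, and the argument specializes to Artin's original proof. A routine bookkeeping point is that each application of Weierstra{\ss} division shrinks the admissible polyradius $\eps$; only finitely many shrinkings occur during the induction, and the submultiplicativity $\|fg\|_{\eps} \le \|f\|_{\eps}\|g\|_{\eps}$ ensures that the final $y_c$ actually lies in $K\{x\}^m$.
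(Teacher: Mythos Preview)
The paper does not prove Theorem~\ref{thm.approx} at all: it is quoted as a known result, with references given in Remark~\ref{rm.nestapprox}(i) to Artin \cite{Ar68} for characteristic~$0$, Andr\'e \cite{An75} for complete valued fields in positive characteristic, and Schemmel \cite{Sc82} for quasi-complete fields. So there is no ``paper's own proof'' to compare against; the theorem is background, and the paper's new contributions are the Division Theorem~\ref{thm.div} and Grauert's Approximation Theorem~\ref{grauertapprox}, which are logically independent of Theorem~\ref{thm.approx}.

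That said, your sketch has a couple of issues worth flagging. First, the phrase ``the generalized Weierstra{\ss} division theorem established earlier in this paper'' is misplaced: Theorem~\ref{thm.div} appears \emph{after} Theorem~\ref{thm.approx} and is not used for it; Artin's argument relies on the classical Weierstra{\ss} preparation theorem, not on Grauert's division by an ideal. Second, your descent step in positive characteristic is too glib. You write that a convergent solution over $\bar K\{x\}$ descends to $K\{x\}$ ``via the identity $\bar K\{x\}\cap K[[x]]=K\{x\}$'' combined with separability of $\bar K/K$. But the convergent solution you build over $\bar K$ need not coincide with the given formal solution over $K$, so that intersection identity does not apply directly. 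The actual content of Schemmel's work is precisely to control this: quasi-completeness is used to ensure that the intermediate constructions (coordinate changes, preparation data, Hensel lifts) can be carried out with coefficients in $K$ rather than only in $\bar K$, and Schemmel shows the hypothesis is both necessary and sufficient. Your outline captures the shape of Artin's induction correctly, but the positive-characteristic paragraph hides the genuine difficulty rather than addressing it.
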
 
The following nested approximation theorem strengthens this when the equations are {\em algebraic power series}\,\footnote{An algebraic power series is a formal power series $f(x), x = x_1,...,x_n,$ for which a non-zero polynomial $P(x, t)$ exists, such that $P(x, f(x)) = 0$ holds. For any field $K$  the ring of algebraic power series is denoted by $K\langle x \rangle$.}, which involve variables that are “nested,” i.e., each block of unknowns depends only on a subset of parameters.
Then for every $c$, there exists an algebraic solution with the same nesting condition, which agrees with the formal one modulo $\langle x\rangle^c$.
\begin{Theorem}[Nested Algebraic Artin Approximation]\label{thm.nestapprox}
Let $K$ be any field. 
Let $x=(x_1,\ldots,x_n)$ and $y=(y_1,\ldots,y_m)$ and $f=(f_1,\ldots,f_k)\in K\langle x,y\rangle^k$ be a system of algebraic power series. 
Let $\bar y(x)=(\bar y_1(x),\ldots,\bar y_m(x)),$ with $ \bar y_i(x) \in K[[x_1,\ldots,x_{s_i}]]$ for some $s_i\leq n$, $i=1,...,m$,
be a formal solution of $f=0$,
$$f(x,\bar y(x))=0.$$
Then there exists for any integer $c>0$ an algebraic solution $y_c(x)=(y_{c,1}(x),\ldots,y_{c,n}(x)),$  with $y_{c,i}\in K\langle x_1,\ldots,x_{s_i}\rangle$,
$$f(x,y_c(x))=0,$$
such that $$\bar y(x)\equiv y_c(x) \text{ mod } \langle x\rangle^c.$$
\end{Theorem}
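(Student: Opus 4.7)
The plan is to derive the nested algebraic approximation from Popescu's general N\'eron desingularization together with the strong (quantitative) approximation theorem it implies. The non-nested version over any algebraic power series ring $K\langle x_1,\ldots,x_s\rangle$ follows from Popescu applied to the regular homomorphism $K\langle x_1,\ldots,x_s\rangle \hookrightarrow K[[x_1,\ldots,x_s]]$: a formal solution factors through a smooth $K\langle x_1,\ldots,x_s\rangle$-algebra, and the Henselian property of $K\langle x_1,\ldots,x_s\rangle$ lifts a mod-$\langle x\rangle^c$ section to a genuine algebraic solution. The strong form moreover provides a function $\beta(c) \geq c$ such that any algebraic approximate solution agreeing with the formal one modulo $\langle x\rangle^{\beta(c)}$ corrects to an exact algebraic solution agreeing modulo $\langle x\rangle^c$.

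After reordering so that $s_1 \leq \cdots \leq s_m$, I would induct on $m$. For the inductive step, separate the component $y_m$ with maximal nesting value $s_m$ from the rest, and treat $y_1,\ldots,y_{m-1}$ temporarily as additional parameters. Apply the non-nested theorem over the excellent Henselian ring $K\langle x_1,\ldots,x_{s_m}, y_1,\ldots,y_{m-1}\rangle$ to obtain an algebraic parametric solution $\widehat y_m(x_1,\ldots,x_{s_m}, y_1,\ldots,y_{m-1})$; the reduction from the full $n$ variables down to $x_1,\ldots,x_{s_m}$ is achieved by expanding $f$ in monomials in $x_{s_m+1},\ldots,x_n$ and observing that the independence of each $\bar y_i$ from these extra variables forces the corresponding coefficients of $f(x, \bar y)$ to vanish separately, after which Noetherianity provides a finite equivalent system. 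Substituting the inductively obtained $(y_{c,1},\ldots,y_{c,m-1})$ into $\widehat y_m$ yields an algebraic $y_{c,m} \in K\langle x_1,\ldots,x_{s_m}\rangle$ by closure of algebraic power series under substitution of series vanishing at the origin, and the base case $m=1$ is treated directly by this same expansion trick.

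The principal obstacle is quantitative bookkeeping across the two levels: while the composition $y_{c,m} := \widehat y_m(x, y_{c,1},\ldots,y_{c,m-1})$ automatically satisfies $f(x, y_c) = 0$ because $\widehat y_m$ was constructed as an exact parametric solution, the condition $y_{c,m} \equiv \bar y_m \bmod \langle x\rangle^c$ requires both that $\widehat y_m(x, \bar y_1,\ldots,\bar y_{m-1}) \equiv \bar y_m \bmod \langle x\rangle^c$ and that the inductively supplied $(y_{c,1},\ldots,y_{c,m-1})$ are close enough to $(\bar y_1,\ldots,\bar y_{m-1})$ to control the substitution error. This is where the strong form with its function $\beta$ is indispensable: one chooses the induction's target precision large enough that, after propagation through $\widehat y_m$, precision $c$ is retained. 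Arranging these nested precision estimates, together with excellence of all intermediate rings and closure of algebraic series under substitution, constitutes the technical heart of the proof.
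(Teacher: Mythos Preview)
The paper does not prove this theorem; it is quoted as a result of Popescu \cite{Po86} (see Remark~\ref{rm.nestapprox}(ii)), stated only to contrast with the failure of nested approximation in the analytic category. So there is no in-paper argument to compare against, and I assess your sketch on its own.

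Your overall architecture---reduce to $s_m$ variables by expanding in the extra ones, peel off the unknown of largest nesting, recurse with strong-approximation bookkeeping---is in the spirit of the known arguments, but the central step has a genuine gap. You propose to ``apply the non-nested theorem over $K\langle x_1,\ldots,x_{s_m}, y_1,\ldots,y_{m-1}\rangle$ to obtain an algebraic parametric solution $\widehat y_m$'' satisfying $f(x,y_1,\ldots,y_{m-1},\widehat y_m)=0$ identically in the $y_i$. But approximation over that ring requires a \emph{formal} solution in $K[[x_1,\ldots,x_{s_m}, y_1,\ldots,y_{m-1}]]$ to approximate, and none is given: $\bar y_m$ lies only in $K[[x_1,\ldots,x_{s_m}]]$ and satisfies $f$ only after the \emph{specific} substitution $y_i=\bar y_i(x)$, not for indeterminate $y_i$. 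In general no such parametric solution exists. Take $n=m=2$, $f=Y_1Y_2 - x_1x_2$, $\bar y_1=x_1\in K[[x_1]]$ (so $s_1=1$), $\bar y_2=x_2\in K[[x_1,x_2]]$ (so $s_2=2$): over $K\langle x_1,x_2,y_1\rangle$ the equation $y_1Y_2=x_1x_2$ has no solution in $K[[x_1,x_2,y_1]]$, since $y_1$ is a nonunit there, and your inductive engine never starts.

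What supplies the elimination of one unknown in the actual proofs is not a second invocation of approximation but the \emph{structural} content of Popescu's theorem: one factors the formal solution through a smooth, finitely presented $K\langle x\rangle$-algebra, and smoothness together with the Henselian property is what permits (via the implicit function theorem for \'etale maps) either a parametric expression of one unknown in terms of the others, or equivalently a replacement of the lowest-level nesting constraints by finitely many polynomial equations that can be carried along into the induction. Your strong-approximation bookkeeping is correct and necessary, but it enters only after this reduction, not in place of it.
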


\begin{Remark}\label{rm.nestapprox}
{\em
\begin{enumerate}
\item[(i)]The analytic approximation theorem was proved in characteristic $0$ by M. Artin \cite{Ar68}. In characteristic $p>0$ for complete valued fields it was proved 
by M. Andr\'e \cite{An75} and for quasi-complete fields by K.-P. Schemmel \cite{Sc82}.\footnote{Schemmel proved that the property of $K$ being quasi-complete is necessary and sufficient for the Approximation Theorem to be true.}
\item[(ii)]The nested algebraic approximation theorem was proved by D. Popescu \cite{Po86} solving a problem of M. Artin posed in \cite{Ar69}. 
That the variables are "nested" becomes clear if we sort the $s_i$ such that $s_1\le s_2\le  ...$. Then $\bar y_1$ depends only on $x_1,...,x_{s_1}$,  $\bar y_2$ only on $x_1,...,x_{s_2}$ etc., and the same holds for $y_{c,1} $, $y_{c,2}$, etc..
\item[(iii)] A similar nested theorem in the analytic case is wrong, see \cite{Ga71}. 
H. Grauert  proved in \cite{Gr} under additional assumptions
 a nested complex analytic approximation theorem, that was used to construct an analytic semiuniversal deformation of a complex analytic germ with an isolated singularity. In his proof Grauert developed (in today's language) a theory of standard bases  using his division theorem to get a normal form of a function in the complex analytic case.\\
In this paper we generalize Grauert's division and approximation theorem (Theorem \ref{thm.div} and Theorem \ref{grauertapprox}) to any real valued field and deduce the existence of a semiuniversal deformation (Theorem \ref{thm.sudef}) for convergent power series over such fields.
\item[(iv)] The statements in Theorem \ref{thm.approx} (resp. Theorem \ref{thm.nestapprox}) can be generalized to solutions modulo an ideal.\\
Let $I=\langle h_1,\ldots,h_l\rangle$ be an ideal in $K\{x\}$ (resp. in $K\langle x\rangle$). If we replace 
in Theorem \ref{thm.approx} (resp. Theorem \ref{thm.nestapprox}) the condition
$$f(x,\bar y(x))=0 \ \text{ by } \ f(x,\bar y(x))\equiv 0 \mod IK[[x]]$$ 
we can find an analytic (resp. nested algebraic) solution $\mod IK\{x\}$ (resp. $\mod IK\langle x\rangle$).\\
This is in fact equivalent to the statements in Theorems \ref{thm.approx} and \ref{thm.nestapprox}:  Replace the $f_j$ by $f_j+\sum_{i=1}^lz_{ij}h_j$, $(z_{ij})$ new variables, and then apply the theorems to the new system.
 \end{enumerate}}
\end{Remark}

\begin{Remark}\label{rm.rvf}
{\em
Some examples of real valued fields.
\begin{enumerate}
\item [(1)] We allow the trivial valuation  ($|0| =0, |a| =1$ if $a\ne 0$) and then $K\{x\}=K[[x]]$, but the results of this article are interesting (and new) only if the valuation is non-trivial. Finite fields permit only the trivial valuation. The valuation may be non-Archimedian, satisfying the stronger triangle inequality  $|x+y|\le \max(|x|,|y|)$.
\item  [(2)] Examples of a complete real valued field are $\C$  
resp. $\R$ with the usual absolute value as valuation. The absolute value on the field of rational numbers makes $\Q$ a real valued field, which is not complete (but it is quasi-complete since its characteristic is 0).
\item  [(3)] Another example is given by the field of $p$-adic numbers.
Let \mbox{$p$} be a prime number, then the map
$ v:\Z\setminus\{0\} \to \R_{>0}\,,\quad a \mapsto p^{-m} \text{ with }
m:=\max\{k\in \N  \mid p^k \text{ divides }a\} $
extends to a unique real valuation of $\Q$. With this
valuation, $\Q$ is a real valued field that is not complete but quasi-complete.
 The completion $\bar \Q$ with resp. to  $v$ is called the
field of {\em $p$-adic numbers}.
\item  [(4)]  Let $F$ be a field and $K$ the quotient field of $F[x]$. If $ f/g \in K\smallsetminus 0$ then
$\ord(f/g) = \ord(f)-\ord(g) \in \Z$. We set
$|f/g| = e^{-\ord(f/g)}$ and $|0|=0$. Then $| \ |$ is a real valuation on $K$. The same applies to $K$ the quotient field of $F\{x\}$ (with $F$ real valued). 
\item  [(5)] The field of {\em formal Laurent series} $K=F((T))$ is the quotient field of $F[[T]]$. With valuation $|c| = e^{-m}$ if $c=\sum_{i=m}^\infty a_iT^i, m \in \Z, a_m\ne0$, $K$ is a real valued field. Another examples of a real valued field  is the field of {\em Puiseux series}  $K=\cup_{n=1}^\infty F((T^{\frac{1}{n}}))$,  with valuation $|c| = e^{-m/n}$ 
if $c=\sum_{i=m}^\infty a_iT^{\frac{i}{n}}, a_m\ne0$.
\end{enumerate}}
\end{Remark}

\section{Grauert's Division Theorem} \label{sec:2}
In this section we generalize Grauert's Division theorem, as well as standard bases, to convergent power series over real valued fields.

We use the notations of the theory of standard bases,  as developed in \cite[Section 1.6, 2.3, 6.4]{GG}. For example, for a fixed monomial ordering $>$
on (the set of monomials $\{x^{\alpha}\}$ of) $K[[x]]$, we write 
$f \in K[[x]], f \ne 0 $, in a unique way as sum of non-zero terms
$$f  = c_{\alpha} {x}^{\alpha} +  c_{\beta} {x}^{\beta} + c_{\gamma} {x}^{\gamma} + ... ,$$
 with ${x}^{\alpha} > {x}^{\beta} >  {x}^{\gamma} > ... ,$
and $c_{\alpha},  c_{\beta}, c_{\gamma}, ...  \in K$.
$\LM (f) := x^{\alpha}$
 is called the {\em leading monomial}, $\LC (f):= c_{\alpha}$ the {\em leading coefficient},  $\LT (f):= c_{\alpha}{x}^{\alpha}$ the {\em leading term}, and $\tail (f):= f - \LT (f)$ the {\em tail} of $f$. 
 
From now on we fix the {\em local degree ordering} deglex
 $>$ on  $K[[x]]$. This means that for $\alpha=(\alpha_1,\ldots,\alpha_n)$ and $\beta=(\beta_1,\ldots,\beta_n)$ we have (with  $|\alpha | =\alpha_1+...+\alpha_n$)
 $$
 \begin{array}{lll}
 x^\alpha > x^\beta \text{ iff } & |\alpha | <|\beta  \text{ or }\\
 &|\alpha |=|\beta | \text{ and } \alpha_1=\beta_1,\ldots,\alpha_{l-1}=\beta_{l-1} \text{ and } \alpha_l < \beta_l \text{ for some } l.
 \end{array}
 $$
We extend $>$ to (the set of {\em monomials} $\{x^{\alpha}e_i\}$ of)
$K[[x]]^N$ by setting
$$x^{\alpha}e_i > x^{\beta} e_j \text{ iff } x^{\alpha} > x^{\beta} \text{ or } (  x^{\alpha} = x^{\beta} \text{ and } i<j),$$
where $\{e_1,...,e_N\}$ is the canonical basis of $K[[x]]^N$.
Any vector
$f \in K[[x]]^N \smallsetminus \{0\}$ can be written uniquely as
$$f = c_\alpha x^\alpha e_i + f^*$$
with $c_\alpha \in K \smallsetminus \{0\}$ and $x^\alpha e_i > x^{\alpha^*} e_j$ for any non-zero term $c^*x^{\alpha^*} e_j$ of $f^*$. We define
$$LM(f) := x^\alpha e_i, \
LC(f):= c_\alpha, \ LT(f) := c_\alpha  x^\alpha e_i \
\tail(f) := f - LT(f)$$
and call it the {\em leading monomial, leading coefficient, leading term} and {\em  tail}, respectively, of $f$. 
Note  that for a sequence $w_j \in K[[x]]^N$ with $LM(w_j) > LM(w_{j+1})$ we have $\sum_j w_j \in  K[[x]]^N$.\\

The following two lemmas are needed for the proof of Grauert's division theorem. The proofs are postponed to the end of this section. $K$ denotes an arbitrary real valued field.

\begin{Lemma}\label{L1}
If $K$ is a complete real valued field then $B_{\eps}^N$ is complete\,\footnote{$K$ resp. $B_{\eps}^N$ is complete, iff every  Cauchy sequence w.r.t. $| \  |$ in $K$ resp. w.r.t. $\| \ \|$ in $B_{\eps}^N$ converges.} for $\eps >0$.
\end{Lemma}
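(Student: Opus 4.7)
The plan is to imitate the standard proof that a weighted $\ell^1$-space over a complete valued field is complete: the norm $\|\,\cdot\,\|_\eps$ is, up to reindexing the monomials and components, an $\ell^1$-norm on the scalar coefficients $c_{i,\alpha}$ with strictly positive weights $\eps^\alpha$. So morally I just need ``$\ell^1(K)$ is complete when $K$ is'', transcribed to our setting.

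Concretely, I would take a Cauchy sequence $(f^{(k)})_{k\in\N}$ in $B_\eps^N$, write $f^{(k)} = \sum_{i=1}^N \sum_{\alpha \in \N^n} c_{i,\alpha}^{(k)} x^\alpha e_i$, and first extract coefficient-wise limits. Since every individual summand of $\|\,\cdot\,\|_\eps$ is bounded by the whole sum,
$$|c_{i,\alpha}^{(k)} - c_{i,\alpha}^{(l)}| \cdot \eps^\alpha \;\le\; \|f^{(k)} - f^{(l)}\|_\eps\,,$$
and because $\eps^\alpha > 0$ each scalar sequence $(c_{i,\alpha}^{(k)})_k$ is Cauchy in $K$. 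By completeness of $K$ it converges to some $c_{i,\alpha} \in K$, and I would define the candidate limit as the formal series $f := \sum_{i,\alpha} c_{i,\alpha} x^\alpha e_i \in K[[x]]^N$. To upgrade coefficient-wise convergence to $\|\,\cdot\,\|_\eps$-convergence and simultaneously verify $f \in B_\eps^N$, I use the finite-subsum trick: given $\delta > 0$, choose $K_0$ with $\|f^{(k)} - f^{(l)}\|_\eps < \delta$ for all $k,l \ge K_0$; for any finite index set $F \subset \{1,\dots,N\} \times \N^n$,
$$\sum_{(i,\alpha) \in F} |c_{i,\alpha}^{(k)} - c_{i,\alpha}^{(l)}| \cdot \eps^\alpha \;\le\; \|f^{(k)} - f^{(l)}\|_\eps \;<\; \delta\,.$$
Since $F$ is finite I can let $l \to \infty$ inside the sum, obtaining $\sum_F |c_{i,\alpha}^{(k)} - c_{i,\alpha}| \cdot \eps^\alpha \le \delta$, and taking the supremum over all finite $F$ yields $\|f^{(k)} - f\|_\eps \le \delta$ for $k \ge K_0$. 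In particular $f - f^{(K_0)} \in B_\eps^N$, hence $f \in B_\eps^N$, and $f^{(k)} \to f$ in $\|\,\cdot\,\|_\eps$.

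The main (and essentially only) subtlety is the interchange of a pointwise-in-$K$ limit with the a priori infinite sum defining $\|\,\cdot\,\|_\eps$; the finite-subsum argument disposes of this cleanly, without any need for a dominated-convergence-style input. Notably, no property of $|\,\cdot\,|$ beyond completeness is used, so the argument applies uniformly to archimedean and non-archimedean valuations alike.
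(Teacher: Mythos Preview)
Your proof is correct and follows essentially the same approach as the paper's: extract coefficient-wise limits using completeness of $K$, define the candidate $f$ formally, then upgrade to norm convergence by passing the limit through finite partial sums and taking the supremum. The paper phrases the last step as an $\eps/2$ argument over the truncations $|\alpha|\le s$ rather than your sup-over-finite-$F$ formulation, but this is the same standard $\ell^1$-completeness argument.
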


\begin{Lemma}\label{L2}
Let $f_1,\ldots,f_m\in K\{x\}^N$.
Given $\eps >0$, there exists a $\delta\in (\R_{>0})^n$ such that $\|{\tail}(f_i)\|_{\delta} < \eps\|\LM(f_i)\|_{\delta}$ for all $i$.
\end{Lemma}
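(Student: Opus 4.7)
The plan is to choose $\delta$ in two stages: first to shape it by weighted scaling so that it dominates the terms of $\tail(f_i)$ sharing the total degree of $\LM(f_i)$, and then to shrink it uniformly to kill the strictly higher-degree part. As preparation I would fix once and for all some $\eps^{*}\in(\R_{>0})^n$ with $\|f_i\|_{\eps^{*}}<\infty$ for every $i=1,\ldots,m$, which exists because $K\{x\}^N=\bigcup_\rho B^N_\rho$ and there are only finitely many $f_i$. Writing $\LM(f_i)=x^{\alpha_i}e_{j_i}$ and $d_i:=|\alpha_i|$, I split $\tail(f_i)=\sigma_i+\tau_i$ into the part $\sigma_i$ of total degree exactly $d_i$ (a finite sum, since in $n$ variables only finitely many monomials have any fixed total degree) and the part $\tau_i$ of strictly higher total degree.

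The combinatorial heart of the argument is to construct positive weights $w_1,\ldots,w_n$ such that $w\cdot(\beta-\alpha_i)>0$ for every exponent $\beta$ occurring in some $\sigma_i$. In the local deglex order restricted to a fixed degree, $x^\beta\prec x^{\alpha_i}$ means $\alpha_{i,l}<\beta_l$ at the first index $l$ of disagreement, so taking $w_l:=M^{-l}$ with $M$ sufficiently large (depending only on $n$ and $\max_i d_i$) makes the contribution at the first differing coordinate dominate and yields the required positivity uniformly over the finite set of pairs $(\alpha_i,\beta)$. I would then set $\eps^{(0)}_l:=\eps^{*}_l\cdot t^{w_l}$ and $\delta_l:=s\cdot\eps^{(0)}_l$, with $t,s\in(0,1]$ parameters to be fixed below.

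For a same-degree tail term $c_\beta x^\beta e_k$ of $f_i$ the computation gives
\[
\frac{|c_\beta|\,\delta^\beta}{\|\LM(f_i)\|_\delta}=|c_\beta|\,(\eps^{*})^{\beta-\alpha_i}\,t^{w\cdot(\beta-\alpha_i)},
\]
the $s$-powers cancelling since $|\beta|=d_i$; by positivity of the weight exponent, this finite sum tends to $0$ as $t\to 0^+$, so I would fix $t$ small enough that $\|\sigma_i\|_\delta<(\eps/2)\|\LM(f_i)\|_\delta$ for every $i$. With $t$ now frozen, every term of $\tau_i$ contributes an extra $s$-power of at least $1$, so $\|\tau_i\|_\delta\leq s^{d_i+1}\|f_i\|_{\eps^{(0)}}\leq s^{d_i+1}\|f_i\|_{\eps^{*}}$, while $\|\LM(f_i)\|_\delta=s^{d_i}(\eps^{(0)})^{\alpha_i}$; the resulting ratio is $O(s)$ and can be driven below $\eps/2$ by shrinking $s$. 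Adding the two estimates yields $\|\tail(f_i)\|_\delta<\eps\|\LM(f_i)\|_\delta$ as required.

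The main obstacle will be the weight construction in the second paragraph: one has to verify carefully that $w_l=M^{-l}$ with $M$ large does realise the restriction of deglex to each relevant degree as a strict linear inequality uniformly across the finite collection of pairs $(\alpha_i,\beta)$, producing an explicit lower bound on $w\cdot(\beta-\alpha_i)$ that converts into a usable power of $t$. Once that combinatorial input is secured, the remaining estimates reduce to elementary manipulations of the submultiplicative norm $\|\cdot\|_\delta$ together with the standard trick of separating the same-degree from the strictly higher-degree contributions of the tail.
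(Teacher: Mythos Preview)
Your proposal is correct and follows essentially the same strategy as the paper: split $\tail(f_i)$ into its same-degree part and strictly higher-degree part, control the latter by a uniform scaling $\delta\mapsto s\cdot\delta$, and control the former by shaping $\delta$ so as to encode the lexicographic comparison at fixed total degree. The only difference is in the execution of the same-degree step: the paper constructs $\rho$ by a descending induction over the coordinates, shrinking one $\rho_s$ at a time according to the first index of disagreement, whereas you package the same combinatorics into a single weight vector $w_l=M^{-l}$ and a power $t^{w_l}$; both realize the deglex order on a fixed degree as a strict linear inequality over the finitely many relevant pairs, and the resulting estimates are identical.
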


The set $M=\{x^{\alpha}e_i \text{ } | \text{ } \alpha\in \N^n , i\in \{1,\ldots,N\}\}$ of {\em monomials of $K[[x]]^N$}
can be identified with  $\N^n\times \{1,\ldots,N\}$. This set is a $\N^n$ semi-module by the canonical action
$\alpha + (\beta,k)=(\alpha+\beta,k)$.

The following theorem (”division by an ideal", a generalization of the Weierstra$\ss$  division theorem) is proved for $K=\C$ by Grauert \cite{Gr}, see also \cite{dJP00}. The formal case is proved in \cite{GG}.
\begin{Theorem}[Division Theorem for real valued fields]\label{thm.div}
Let  $f_1, \dots, f_m$ be elements of $K\{x\}^N$. Then, 
  for every $f \in K\{x\}^N$, there exist
$q_1, \ldots, q_m \in K\{x\}$ and an element $r \in K\{x\}^N$
such that
\[
f = \sum^m_{j=1} q_jf_j + r,
\]
having the following two  properties:
\begin{enumerate}
\item No monomial of $r$ is divisible by $\LM(f_j)$ for $j = 1, \dots, m$; $r$ is uniquely determined with this property  by  the ordered set  $f_1, \dots, f_m$;
\item $\LM(f) \ge \LM(q_j f_j)$ for  $j = 1, \dots, m$.
\end{enumerate}
\end{Theorem}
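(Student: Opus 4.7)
The plan is to build the division in two stages: first run the formal reduction algorithm to produce $q_j \in K[[x]]$ and $r \in K[[x]]^N$ with the stated combinatorial properties, and then use Lemma \ref{L2} together with completeness (Lemma \ref{L1}) to upgrade these formal series to convergent ones.

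For the formal step, I would iterate a simultaneous Mora-style reduction. Setting $f^{(0)} := f$, $q_j^{(0)} := 0$, $r^{(0)} := 0$, at step $k$ I partition the monomials of $f^{(k)}$ into those $x^\alpha e_i$ divisible by some $\LM(f_j)$ (assigning each such monomial to the smallest such $j$) and the rest, denoted $\rho^{(k)}$. Writing the divisible part uniquely as $\sum_j a_j^{(k)}\LM(f_j)$ with $a_j^{(k)} \in K[[x]]$, set
\[
q_j^{(k+1)} := q_j^{(k)} + a_j^{(k)}, \quad r^{(k+1)} := r^{(k)} + \rho^{(k)}, \quad f^{(k+1)} := -\sum_{j=1}^m a_j^{(k)}\tail(f_j),
\]
so that $f = \sum_j q_j^{(k)}f_j + r^{(k)} + f^{(k+1)}$ is maintained. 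Since $>$ is the local degree ordering, $\LM(f_j)$ is the term of smallest order of $f_j$, so $\ord(\tail(f_j)) > \ord(\LM(f_j))$, which forces $\ord(f^{(k+1)}) > \ord(f^{(k)})$. Hence $q_j := \sum_k a_j^{(k)} \in K[[x]]$ and $r := \sum_k \rho^{(k)} \in K[[x]]^N$ are well defined; property (1) holds by construction, and property (2) follows because every $x^\gamma$ appearing in some $a_j^{(k)}$ satisfies $x^\gamma\LM(f_j) \le \LM(f^{(k)}) \le \LM(f)$.

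For convergence, I apply Lemma \ref{L2} with $\varepsilon = 1/2$ to $f_1,\ldots,f_m$ to obtain $\delta_0 \in (\R_{>0})^n$ with $\|\tail(f_j)\|_{\delta_0} < \tfrac{1}{2}\|\LM(f_j)\|_{\delta_0}$ for every $j$. A uniform shrinking $\delta := t\delta_0$, $t \in (0,1]$, preserves this inequality: all tail monomials of $f_j$ have total degree $\ge \ord(\LM(f_j))$, so scaling by $t$ scales $\|\tail(f_j)\|_\delta$ by at most the factor $t^{\ord(\LM(f_j))}$ applied to $\|\LM(f_j)\|_\delta$. Taking $t$ small enough to ensure $\|f\|_\delta < \infty$, I combine the two conditions at one $\delta$. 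The key observation is that $f^{(k)} = \sum_j a_j^{(k)}\LM(f_j) + \rho^{(k)}$ is a disjoint partition of the monomials of $f^{(k)}$, so
\[
\sum_{j=1}^m \|a_j^{(k)}\|_\delta\cdot\|\LM(f_j)\|_\delta + \|\rho^{(k)}\|_\delta = \|f^{(k)}\|_\delta.
\]
Combining with Lemma \ref{L2} yields the contraction
\[
\|f^{(k+1)}\|_\delta \le \sum_j \|a_j^{(k)}\|_\delta\|\tail(f_j)\|_\delta \le \tfrac{1}{2}\sum_j \|a_j^{(k)}\|_\delta\|\LM(f_j)\|_\delta \le \tfrac{1}{2}\|f^{(k)}\|_\delta,
\]
whence $\|f^{(k)}\|_\delta \le 2^{-k}\|f\|_\delta$ and the partial sums for $q_j$ and $r$ are Cauchy in $B_\delta^1$ resp.\ $B_\delta^N$ over $\bar K$. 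Lemma \ref{L1} produces limits in $\bar K\{x\}$ resp.\ $\bar K\{x\}^N$; since the algorithm keeps all coefficients in $K$, the identity $\bar K\{x\}\cap K[[x]] = K\{x\}$ from the introduction forces $q_j \in K\{x\}$ and $r \in K\{x\}^N$.

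Uniqueness of $r$ given the ordered list $(f_1,\ldots,f_m)$ is the standard standard-basis argument: a second representation would give a relation $\sum_j(q_j-q_j')f_j = r'-r$ whose left-hand side has leading monomial divisible by some $\LM(f_j)$, contradicting the reducedness of $r'-r$ unless both sides vanish. The main technical obstacle, I expect, is calibrating the three compatibility conditions—Lemma \ref{L2}'s inequality for every $j$, finiteness of $\|f\|_\delta$, and the disjoint-monomial bookkeeping—at a single $\delta$ so that the contraction factor is exactly $1/2$ and not polluted by combinatorial constants depending on $m$ or the shape of the stairs; once this calibration is in place, the geometric decay above makes the convergence and the passage through $\bar K$ essentially automatic.
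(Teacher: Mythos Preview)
Your approach is essentially the paper's: the same staircase partition $\Gamma_1,\ldots,\Gamma_m$, the same recursion $f^{(k+1)}=-\sum_j a_j^{(k)}\tail(f_j)$, and the same appeal to Lemmas~\ref{L1} and~\ref{L2} for convergence via passage to the completion $\bar K$. Your norm estimate is in fact slightly sharper than the paper's: you use the disjoint-monomial \emph{equality} $\sum_j\|a_j^{(k)}\|_\delta\|\LM(f_j)\|_\delta+\|\rho^{(k)}\|_\delta=\|f^{(k)}\|_\delta$, whereas the paper uses only the cruder bound $\|q_i(w)\|_\delta\le\|w\|_\delta/\delta^{\alpha(i)}$ and must compensate by taking $\varepsilon$ of order $1/m$ to absorb the sum over $i$.

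Two slips need fixing. First, the assertion $\ord(\tail(f_j))>\ord(\LM(f_j))$ is false: under the local degree ordering deglex the tail may contain monomials of the \emph{same} total degree as $\LM(f_j)$ (e.g.\ $f_j=x_1+x_2$). What holds is the monomial-ordering inequality $\LM(f^{(k+1)})<\LM(f^{(k)})$; since only finitely many monomials live in each degree, this still forces $\ord(f^{(k)})\to\infty$ and hence formal well-definedness of $q_j$ and $r$, but not the one-step order jump you claim. Second, your uniqueness argument is the standard-basis argument and does not apply here: if $f_1,\ldots,f_m$ is not a standard basis of the submodule it generates, a nonzero element of $\langle f_1,\ldots,f_m\rangle$ need not have leading monomial divisible by any $\LM(f_j)$. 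For instance with $f_1=x_1-x_2^2$, $f_2=x_1x_2$ one has $x_2^3=f_2-x_2 f_1\in\langle f_1,f_2\rangle$, so $f=x_2^3$ admits both $r=x_2^3$ (the algorithm's output) and $r=0$, each satisfying (1). The uniqueness intended in the theorem is uniqueness of the construction's output for the given ordered list---what the paper calls ``uniquely determined by construction''---not uniqueness among all remainders satisfying (1).
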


\begin{proof}
We first give a construction of $q_i\in K[[x]]$ and $r\in K[[x]]^N$ having the properties (1) and (2). Then we pass to the completion $\bar K$ of $K$ to prove the convergence. Using $\bar K\{x\}\cap K[[x]]=K\{x\}$ we obtain the convergence
over $K$.\\

We may assume that $\LC(f_i)=1$ for all $i$.  Let
$\LM(f_i)=x^{\alpha(i)}e_{k(i)}$. We define the $N^n$ semi-module 
$$\Gamma:=\langle (\alpha(1),k(1)),\ldots,(\alpha(m),k(m))\rangle_{\N^n}$$
and a partition $\Gamma_1,...,\Gamma_m$ of $\Gamma$ by $\Gamma_1:=\langle (\alpha(1),k(1))\rangle_{\N^n}$ and for $i=2,\ldots m$
$$\Gamma_i:=\langle (\alpha(i),k(i))\rangle_{\N^n}\cap (\Gamma\setminus \langle(\alpha(1),k(1)),\ldots,(\alpha(i-1),k(i-1))\rangle_{\N^n}).$$
Now we define a map $r:K[[x]]^N\longrightarrow K[[x]]^N$ and maps $q_i:K[[x]]^N\longrightarrow K[[x]]$ as follows:
Given $w=\sum_{\alpha,k}w_{\alpha,k}x^{\alpha}e_k\in K[[x]]^N$ we set
$$r(w):=\sum_{(\alpha,k)\notin \Gamma}w_{\alpha,k}x^{\alpha}e_k \text{ and } q_i(w):=\frac{1}{x^{\alpha(i)}}\sum_{(\alpha,k(i))\in\Gamma_i}w_{\alpha,k(i)}x^{\alpha}.$$
Note that if $(\alpha,k(i))\in \Gamma_i$ and $w_{\alpha,k(i)}\ne 0$, then $x^{\alpha(i)} |w_{\alpha,k(i)} x^\alpha$. Hence
$r(K\{x\}^N)\subset K\{x\}^N$ and $q_i(K\{x\}^N)\subset K\{x\}$.
We obtain 
\begin{equation}\label{xx}
w=\sum_{i=1}^mq_i(w)x^{\alpha(i)}e_{k(i)}+r(w)=\sum_{i=1}^mq_i(w)\LM(f_i)+r(w).\hspace{2.5cm} 
\end{equation}
We define now a sequence $\{w_j\}_{j\ge 0}$ inductively by
\begin{equation}\label{yy}
w_0:=f \text{ and } w_{j+1}:=w_j-\sum_{i=1}^m q_i(w_j)f_i-r(w_j)=-\sum_{i=1}^mq_i(w_j)\text{tail}(f_i). 
\end{equation}
Since the ordering is a local degree ordering and 
$\LM(w_j)> \LM(w_{j+1})$ 
the element $w:=\sum_{j=0}^\infty w_j$ is a well-defined element in $K[[x]]^N$.  Now using (\ref{yy})  we obtain
$$f=\sum_{j=0}^\infty(w_j-w_{j+1})=\sum_{j=0}^\infty(w_j+\sum_{i=0}^mq_i(w_j)\text{tail}(f_i))=\sum_{j=0}^\infty(\sum_{i=0}^mq_i(w_j)f_i+r(w_j)).$$
Since $\sum_{j=0}^\infty q_l(w_j)=q_l(w)$ and  $\sum_{j=0}^\infty r(w_j)=r(w)$ we obtain
$$f=\sum_{i=0}^m q_i(w)f_i+r(w).$$
By definition of $r$ we have that no monomial of $r(w)$ is divisible by $\LM(f_j)$ for $j = 1, \dots, m$. $r$ is uniquely determined by  construction. This proves statement 1. \\

\noindent To see 2. note that $\LM(w_j)>\LM(w_{j+1})$. This implies
$$\LM(f)\geq \LM(q_i(f)f_i)=\LM(q_i(w_0)f_i)\geq\LM(\sum_{j=0}^\infty q_i(w_j)f_i)=\LM(q_i(w)f_i),$$
which proves the formal case.\\

It remains to prove that $w =\sum_{j=0}^\infty w_j \in K\{x\}^N$, which implies $r(w)\in K\{x\}^N$ and $q_i(w)\in K\{x\}$ for all $i$. For this part of the proof we may assume that
$K$ is a complete valued field (see beginning of the proof). Let $\eps>0$ be given. Using Lemma \ref{L2} we find $\delta\in R^n_{>0}$ such that for all $i$
$$\|\text{tail}(f_i)\|_\delta < \eps\|\LM(f_i)\|_\delta.$$
Now by definition of $r$ and $q_i$ we have
\begin{equation}\label{zz}
\|r(w)\|_\delta\leq \|w\|_\delta \text{ and }  \|q_i(w)\|_\delta \leq\frac{\|w\|_\delta}{\delta^{\alpha(i)}}=\frac{\|w\|_\delta}{\|\LM(f_i)\|_\delta}
\end{equation}
We obtain
$$\|w_{j+1}\|_\delta=\|-\sum_{i=1}^mq_i(w_j)\text{tail}(f_i)\|_\delta\leq\sum_{i=1}^m\frac{1}{\delta^{\alpha(i)}}\|w_j\|_\delta\frac{\eps}{m}\delta^{\alpha(i)}=\eps\|w_j\|_\delta.$$
We obtain $\|w_{j+1}\|_\delta\leq\eps^{j+1}\|f\|_\delta$. This implies that $\sum_{j=0}^k w_j$ is a Cauchy sequence in
$B_\delta^N$. Since $B_\delta^N$ is complete (Lemma \ref{L1}), it follows that $w=\sum_{j=0}^\infty w_j\in B_\delta^N$.
\end{proof}

We note that the proof Theorem \ref{thm.div} works not only for 
deglex, but for any local degree ordering $>$ on $K[[x]]$ (see \cite{GG}), and any extension on $K[[x]]^N$.

\begin{Remark}\label{normNF} {\em The inequality 
 $\|w_{j+1}\|_\delta\leq\eps^{j+1}\|f\|_\delta$ implies $$\|w\|_\delta\leq\sum_{j=0}^\infty\|w_j\|_\delta\leq \|f\|_\delta\sum_{j=0}^\infty\eps^j=\frac{1}{1-\eps}\|f\|_\delta.$$
Equation {\em (\ref{zz})} implies 
 $$\|r(w)\|_\delta\leq \frac{1}{1-\eps}\|f\|_\delta  \text{ and } \|q_j(w)\|_\delta\leq \frac{1}{1-\eps}\frac{\|f\|_\delta}{\|\LM(f_j)\|_\delta}.$$
 Moreover, if $f_1,\ldots,f_m\in K\{x_{s+1},\ldots,x_n\}$ for some $s\geq 0$, then the proof of Theorem \ref{thm.div} implies
  $$\|r(w)\|_{\lambda\circ\delta}\leq \frac{1}{1-\eps}\|f\|_{\lambda\circ\delta}  \text{ and } \|q_j(w)\|_{\lambda\circ\delta}\leq \frac{1}{1-\eps}\frac{\|f\|_{\lambda\circ\delta}}{\|\LM(f_j)\|_{\lambda\circ\delta}}$$
 with $0<\lambda<1$ and $\lambda\circ\delta=(\delta_1,\ldots,\delta_s,\lambda\delta_{s+1},\ldots,\lambda\delta_n)$.
 }
\end{Remark}

We define now a standard basis for a fixed monomial ordering $>$ on $K[[x]]^N$.

\begin{Definition}\label{def.SB} 
Let $S$ be a subset of $K\{x\}^N$ and $I \subset K\{x\}^N$ a submodule.
\begin{enumerate}
\item The $K[x]$-submodule of  $K[x]^N$
$$L(S) := \langle LM(g) \mid g \in S \smallsetminus \{0\}\rangle _{K[x]}\subset  K[x]^N$$
is called the {\em leading module} of $\langle S \rangle_{K\{x\}}$. In particular,
$L(I)$ is called the {\em leading module} of $I$.
\item A finite set $S \subset K\{x\}^N$ is called a {\em standard basis} of $I$ if 
\begin{enumerate}
\item $S\subset I$;
\item $L(S) = L(I)$, that is, for any $f \in I \smallsetminus \{0\}$ there exists
a $g \in S$ satisfying $LM(g) \mid LM(f)$.\footnote{We say that $x^\beta e_i$ is divisible by $x^\alpha e_j$ (notation $x^\alpha e_i \mid x^\beta e_j$) if $i = j$ and $x^\alpha \mid x^\beta$.}
\end{enumerate}
\end{enumerate}
\end{Definition}

\begin{Definition}\label{def.NF} 
With the notations of Theorem \ref{thm.div} we define the {\em normal form} of $f$ with respect to the ordered set
 $S:=\{f_1, \dots, f_m\}$ by 
 $$\NF(f | S):=r.$$
 Let $I\subset K\{x\}^N$ be a submodule and $S:=\{f_1, \dots, f_m\}$  a standard basis of $I$. 
 We define the {\em normal form of $f$ with respect to $I$} as
 $$\NF(f | I):=\NF(f | S).$$ 
\end{Definition}
By Lemma \ref{lem.NF} below,  $\NF(f | S)$ does not depend on the ordering of $S$ and also not on the choice of a standard basis, but only on the ideal $I$ generated by $S$.  Hence, the notation $\NF(f | I)$ makes sense.\\

We recall the basic properties of standard bases and normal forms for submodules of $K\{x\}^N$. We fix a local degree ordering $>$  on $K[[x]]$, for example deglex, and an extension to  $K[[x]]^N$.\\

\begin{Lemma}\label{lem.NF}
Let $I\subset K\{x\}^N$ be a submodule and $S:=\{f_1, \dots, f_m\}$ a standard basis of $I$.
\begin{enumerate}
\item[(1)] For any $f\in k\{x\}^N$ we have $f \in I$ if and only if $NF(f | S) = 0$.
\item[(2)] If $J\subset K\{x\}^N$ is a submodule with $I \subset J$, then $L(I) = L(J)$ implies $I= J$.
\item[(3)] $I = \langle S \rangle_{K\{x\}}$, that is, the standard basis $S$ generates the submodule $I$.
\item[(4)] $NF(f | S)$ does not depend on the ordering of $S$ and also not on the choice of a standard basis of $I$.
\end{enumerate}
\end{Lemma}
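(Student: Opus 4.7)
The plan is to prove the four statements in order, since each one leans on its predecessors.

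For \textup{(1)}, I would apply the division theorem (Theorem~\ref{thm.div}) to write $f = \sum_{j=1}^m q_j f_j + r$ with $r = \NF(f|S)$. The direction $\NF(f|S)=0 \Rightarrow f\in I$ is immediate since each $f_j \in I$. For the converse, if $f \in I$ then $r = f - \sum q_j f_j \in I$. Assuming $r \neq 0$, we would have $\LM(r) \in L(I) = L(S)$, so some $\LM(f_j)$ would divide $\LM(r)$; this contradicts the uniqueness property in Theorem~\ref{thm.div}, which says that no monomial of $r$ is divisible by any $\LM(f_j)$. Hence $r = 0$.

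For \textup{(2)}, given $I \subset J$ with $L(I)=L(J)$, I would fix a standard basis $S$ of $I$, pick $f \in J$, and set $r = \NF(f|S)$. Then $r = f - \sum q_j f_j \in J$ because $f \in J$ and the $f_j \in I \subset J$. If $r \neq 0$, then $\LM(r) \in L(J) = L(I) = L(S)$, contradicting the defining property of $r$ as in \textup{(1)}. So $r = 0$, and \textup{(1)} yields $f \in I$.

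For \textup{(3)}, set $J := \langle S \rangle_{K\{x\}}$; since $S \subset I$, we have $J \subset I$. The chain $L(S) \subset L(J) \subset L(I) = L(S)$ forces $L(J) = L(I)$, and then \textup{(2)} gives $J = I$.

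For \textup{(4)}, let $S$ and $S'$ be two standard bases of $I$ (including the case where $S'$ is merely a reordering of $S$), and write $r = \NF(f|S)$ and $r' = \NF(f|S')$. By \textup{(3)} both sets generate $I$, so $f - r$ and $f - r'$ lie in $I$, whence $r - r' \in I$. No monomial of $r$ belongs to $L(S) = L(I)$, and the same holds for $r'$ w.r.t.\ $L(S') = L(I)$; therefore no monomial of $r - r'$ lies in $L(I)$. If $r \neq r'$, then $\LM(r - r') \in L(I)$, a contradiction, so $r = r'$.

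I do not expect a serious obstacle here: once Theorem~\ref{thm.div} is in hand, the argument is routine combinatorics on leading monomials. The only point requiring attention is translating the phrase ``no monomial of $r$ is divisible by any $\LM(f_j)$'' into the cleaner statement ``no monomial of $r$ lies in $L(S)$'', which is what makes the four implications fall into place.
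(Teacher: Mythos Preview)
Your proof is correct and follows essentially the same route as the paper: each part is deduced from the division theorem via the leading-monomial argument, with (2) used to obtain (3) and (3) feeding into (4). The only cosmetic difference is that in (4) the paper observes $\LM(r-r')$ must be a monomial of either $r$ or $r'$, whereas you note more generally that no monomial of $r-r'$ lies in $L(I)$; both are immediate and lead to the same contradiction.
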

\begin{proof}
(1) If $NF(f | S) = 0$ then $f \in I$ by Theorem \ref{thm.div}.
 If $NF(f | S) \ne 0$,
then $LM(NF(f | S)) \notin L(S) = L(I)$ again by Theorem \ref{thm.div}. Hence $NF(f | S) \notin I$, which implies $f \notin I$, since $\langle S\rangle \subset I$. \\
To prove (2), let $f \in J$ and assume that $NF(f|S) \ne 0$.
Then $LM(NF(f | S)) \notin L(S) = L(I) = L(J)$, contradicting (by Theorem \ref{thm.div})
$NF(f | S) \in J$.
Hence, $f \in I$ by (1).\\
(3) follows from (2), since $L(I) = L(S) \subset L((S)_{K\{x\}}) \subset L(I)$. \\
Finally, to prove (4), let $f \in K\{x\}^N$ and assume
that $r$ resp.  $r'$ are two normal forms of $f$ with respect to the standard bases $S$ resp. $S'$ of $I$. Then no monomial of $r$ resp. $r'$ is divisible by any monomial of $L(S)$ resp. $L(S')$. Moreover, 
$r - r' = (f - r') - (f - r) \in \langle S' \rangle_{K\{x\}} +\langle S \rangle_{K\{x\}} =I$ (by (3)). If 
$r - r' \ne 0$, then $LM(r - r') \in L(I) = L(S)=L(S')$, a contradiction (by Theorem 2.3), since $LM(r - r')$ is a monomial of either $r$ or $r'$.
\end{proof}

\begin{Remark}{\em
The theory of standard bases for ideals ($N=1$) and submodules $I$ of $(K[x]_{\langle x\rangle})^N$ w.r.t. arbitrary local orderings was developed in \cite{GG} with emphasis on algorithms to compute them. Although we do not need it, we like to mention that the following holds (the proof in \cite[Theorem 6.4.3]{GG} for ideals is basically the same for modules):}
\end{Remark}

\begin{Theorem}
If $S \subset K[x]^N$ is standard basis for a local degree ordering of the submodule $I\subset K[x]^N$, then $S$ is a standard basis $I K\{x\}^N$.
\end{Theorem}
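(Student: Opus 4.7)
The plan is to reduce the convergent statement to the purely formal version already established in \cite[Theorem 6.4.3]{GG} (and its module variant, which is the content of the preceding remark). This reduction works because the leading monomial of any $f \in K[[x]]^N$ with respect to our fixed local degree ordering is, by construction, an element of $K[x]^N$, and therefore does not depend on whether $f$ is regarded as an element of $(K[x]_{\langle x\rangle})^N$, of $K\{x\}^N$, or of $K[[x]]^N$. In particular, the leading modules computed in these three ambient rings can only grow, while $L(S)$ stays the same.

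The two inclusions to be verified are straightforward. First, $S \subset I \subset IK\{x\}^N$ gives $L(S) \subset L(IK\{x\}^N)$ at once. For the reverse inclusion, I would fix a nonzero $f \in IK\{x\}^N$; through the natural embedding $K\{x\} \hookrightarrow K[[x]]$ we obtain $f \in IK[[x]]^N$. The formal analog of the theorem, i.e. the module version of \cite[Theorem 6.4.3]{GG}, tells us that $S$ is a standard basis of $IK[[x]]^N$, whence $LM(f) \in L(IK[[x]]^N) = L(S)$. Combined with the trivial inclusion $S \subset IK\{x\}^N$, this shows that $S$ is a standard basis of $IK\{x\}^N$.

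The main substantive work thus lies in the formal case, which is handled in \cite{GG}. A self-contained alternative based on Grauert's Division Theorem (Theorem \ref{thm.div}) would apply the division theorem to $f$ with respect to $S$ to obtain $f = \sum q_i f_i + r$ with the stated properties, and then have to conclude $r = 0$. Since $S$ generates $I$ over $K[x]_{\langle x\rangle}$, one has $IK\{x\}^N = \langle S \rangle_{K\{x\}}$, so the remainder $r \in \langle S \rangle_{K\{x\}}$ would have no monomial divisible by any $LM(f_i)$. Excluding $r \ne 0$ directly within $K\{x\}^N$, without passing to the formal ring, would essentially reproduce the S-polynomial or syzygy argument of \cite{GG}, which is the genuine obstacle; the reduction to the formal case described above therefore remains the most efficient route.
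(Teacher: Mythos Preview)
Your proposal is correct and matches the paper's treatment: the paper gives no argument of its own beyond the parenthetical remark that ``the proof in \cite[Theorem 6.4.3]{GG} for ideals is basically the same for modules,'' and your write-up likewise reduces the substantive work to that reference. Your extra observation---that one may sandwich $L(IK\{x\}^N)$ between $L(S)$ and $L(IK[[x]]^N)$ via the embedding $K\{x\}\hookrightarrow K[[x]]$, and then invoke the formal result---is a clean way to organize the reduction, but it is not a genuinely different route: it just makes explicit why the formal statement already implies the convergent one.
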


This allows to compute a standard basis for submodules $I$ of $K\{x\}^N$ if the generators of $I$ have polynomial components.\\

We prove now Lemma \ref{L1} and Lemma \ref{L2}, which were used in the proof of Theorem \ref{thm.div}.

\begin{proof}[Proof of Lemma \ref{L1}] (See also  \cite[Satz 1]{Gr} for $N=1$.)
Let  $\{f_e = \sum_{\alpha,k} f_{e,\alpha,k}x^\alpha e_k\}_{e\in \N}$ be a Cauchy sequence in $B^N_\rho$.
By definition of the norm we get
 $\rho^\alpha|f_{e,\alpha,k}-f_{m,\alpha,k}| \leq \|f_e -f_m\|_\rho$.
It follows that  for {\em fixed} $(\alpha,k)$ the sequence
$\{f_{e,\alpha,k}\}_{e\in\N}$
is a Cauchy sequence in $K$.
 As
$K$ is complete we can define 
\[
f^{(\alpha,k)} := \lim_{e \to \infty}f_{e, \alpha,k}; \;  \; f: = \sum_{\alpha,k}
f^{(\alpha,k)} x^\alpha e_k \in K[[x]]^N.
\]
We claim that  $f \in B_\rho^N$ and $\lim_{e \to \infty} f_e = f$.
Let $\eps > 0$ be given. Since $f_1, \, f_2, \ldots $ is a Cauchy sequence there exists an $l_0 = l_0(\eps)$ such
that for all $l \ge l_0 $ and all $ e \ge 0$ we have the inequality
\begin{equation}\label{a}
\sum_{\alpha,k} |f_{l+e,\alpha,k} -f_{l,\alpha,k}|\rho^\alpha = \|f_{l+e}-f_l\|_\rho \leq 
\frac{\eps}{2}.
\end{equation}
Furthermore, as $\lim_{e \to \infty}f_{e,\alpha,k} = f^{(\alpha,k)}$
we have that for all $s \in \N$ there exists an $e_0 = e_0(s)$ such that
\begin{equation}\label{b}
\sum_{\alpha: |\alpha|\leq s} \left|f^{(\alpha,k)} -f_{e+l,\alpha,k}\right|\rho^\alpha 
< \frac{\eps}{2N} \quad {\rm for \; all}\; l \ge l_0\; {\rm and}\; e \ge e_0.
\end{equation}
Putting (\ref{a}) and  (\ref{b}) together we get
for all $l \ge l_0$
\[
\begin{array}{lll}
\sum_{\alpha,k: |\alpha|\leq s} \left|f^{(\alpha,k)} -f_{l,\alpha,k}\right|\rho^\alpha 
&\leq 
\sum_{\alpha,k: |\alpha|\leq s} \left|f^{(\alpha,k)} -f_{e+l,\alpha,k}\right|\rho^\alpha \\
&\quad +
\sum_{\alpha,k: |\alpha|\leq s} \left|f_{e+l,\alpha,k} -f_{l,\alpha,k}\right|\rho^\alpha \\
&< \frac{\eps}{2} +  \frac{\eps}{2} = \eps.
\end{array}
\]
This holds for all $s$ so that 
\begin{equation}\label{c}
\|f -f_l\|_\rho = \sum_{\alpha,k} \left| f^{(\alpha,k)} -f_{l,\alpha,k}\right|\rho^\alpha <\eps
\quad {\rm for \; all\; } l \ge l_0.
\end{equation}
As $\|f\|_\rho \leq \|f-f_{l_0}\|_\rho +\|f_{l_0}\|_\rho \leq \eps + \|f_{l_0}\|_\rho
< \infty$, it follows $f \in B^N_\rho$. From (\ref{c}) 
it follows that $\lim_{e\to \infty}f_e = f$. Thus  $B^N_\rho$ is complete.
\end{proof}

\begin{proof}[Proof of Lemma \ref{L2}]
We may assume that $\LC(f_i)=1$ for all $i$.
We write $x^{\alpha(i)}e_{k(i)}$ for the leading term of $f_i$, and
we write $f_i = \sum_{\alpha,k} f_{\alpha,k}^{(i)} x^\alpha e_k$, $i=1,...,m$.
 Moreover
we define
\[
f_i^{(1)} = \sum_{\substack{(\alpha,k): |\alpha| = |\alpha(i)|\\ \alpha \neq \alpha(i)}} f_{\alpha,k}^{(i)} x^\alpha e_k, \quad f_i^{(2)} = \sum_{(\alpha,k): |\alpha| >
|\alpha(i)|} f_{\alpha,k}^{(i)} x^\alpha e_k.
\]
Obviously $\tail(f_i) = f_i^{(1)} + f_i^{(2)}$.

Given $\eps >0$, we will choose $\rho=(\rho_1,\ldots,\rho_n)$ so small that 
\begin{equation}\label{d}
\|f_i^{(1)}\|_\rho < \frac{\eps}{2}\|\LM(f_i)\|_\rho=\frac{\eps}{2}\|x^{\alpha(i)}\|_\rho.
\end{equation}
Let 
\[
M_i:= \{(\alpha,k): |\alpha| = |\alpha^{(i)}|, \; 
\alpha \neq \alpha^{(i)}\; {\rm and} \; f_{\alpha,k}^{(i)} \neq 0\}, \quad N_i := \# M_i.
\]
Then $f_i^{(1)}=\sum_{(\alpha,k)\in M_i} f_{\alpha,k}^{(i)} x^\alpha e_k$.
 and the inequality (\ref{d}) will follow from 
\begin{equation}\label{e}
\|f_{\alpha,k}^{(i)}x^\alpha e_k\|_\rho < \frac{\eps}{2N_i}\|\LM(f_i)\|_\rho=\frac{\eps}{2N_i}\|x^{\alpha(i)}\|_\rho
\quad \text{for all} \; (\alpha,k) \in M_i.
\end{equation}
To prove (\ref{e}) we construct $\rho_1,\ldots, \rho_{n}$ by descending induction.
We start with $\rho = (\tfrac{1}{2}, \ldots, \tfrac{1}{2})$.
 Write $\alpha = (\alpha_1,\ldots, \alpha_{n})$
and $\alpha(i) =  (\alpha_1(i),\ldots, \alpha_{n}(i))$.
The inequality $x^\alpha < x^{\alpha(i)}$  means
that there exists a $l = l(i,\alpha)$ such that
 \begin{equation}\label{f}
\alpha_1 = \alpha_1(i), \dots, \alpha_{l-1} = \alpha_{l-1}(i) \text{ and
  } \alpha_l > \alpha_l(i).
\end{equation}
Suppose that $\rho_\ell, \ldots, \rho_{n}$ are already defined 
such that (\ref{e}) holds for all $\alpha, i$ with $l(i,\alpha) \ge \ell$.
Because of (\ref{f}) the formula
(\ref{e}) will still hold
 after modifying $\rho_1,
\ldots, \rho_{\ell -1}$. 
Let $s \in \{1,\ldots, \ell -1\}
$ be the maximum of all $l(i,\alpha)$ for all $i$
and $(\alpha,k) \in M_i$, such that $l(i,\alpha) \leq \ell -1$. 
Consider  $i$ and $\alpha$ such that $s = l(i,\alpha)$.
We do not modify $\rho_{s+1}, \ldots, \rho_{n}$. We 
 can choose ${\rho}_s $ such that 
\begin{equation}\label{g}
{\rho}_s^{\alpha_s-\alpha_s(i)} < \dfrac{\varepsilon}{2N_i|f_{\alpha,k}^{(i)}\|}
    \cdot {\rho}_{s+1}^{\alpha_{s+1}(i)-\alpha_{s+1}}\cdot \cdots 
    {\rho}_{n}^{\alpha_{n}(i) - \alpha_{n}}.
\end{equation}
This we can do as $\alpha_s > \alpha_s(i)$.
So we may assume that 
 (\ref{e}) holds for all $i, \alpha$ 
with $s = l(i,\alpha)$. This gives the induction step. 
It remains to prove that for a suitable defined $\rho$
$$\|f_i^{(2)}\|_\rho < \frac{\eps}{2}\|\LM(f_i)\|_\rho=\frac{\eps}{2}\|x^{\alpha(i)}\|_\rho.$$
For $\lambda $ with $0 < \lambda \leq 1$ let $\lambda \circ \rho=(\lambda\rho_1,\ldots,\lambda\rho_n)$.
By definition of the norm  we have the equalities
$\|x^{\alpha(i)}\|_{\lambda \circ \rho} = \lambda^{|\alpha(i)|}\|x^{\alpha(i)}\|_\rho$,
 $\|f_i^{(1)}\|_{\lambda \circ \rho} = \lambda^{|\alpha(i)|}\|f_i^{(1)}\|_\rho$
and 
the inequality
 $\|f_i^{(2)}\|_{\lambda \circ \rho} \leq \lambda^{|\alpha(i)|+1}\|f_i^{(2)}\|_\rho$. Therefore, if we take
\[
\lambda <\frac{\eps\|x^{\alpha(i)}\|_\rho} {2\|f_i^{(2)}\|_\rho} \quad {\rm for \; all}\; i,
\]
we get 
\[
\|\tail(f_i)\|_{\lambda \circ \rho} \leq
\|f_i^{(1)}\|_{\lambda \circ \rho}+ \|f_i^{(2)}\|_{\lambda \circ \rho}
\leq
 \lambda^{|\alpha(i)|}\left( \frac{\eps}{2}\|x^{\alpha(i)}\|_\rho 
+\lambda \|f_2^{(i)}\|_\rho \right) \leq
\eps \|L(f_i)\|_{\lambda \circ \rho}.
\]
Thus the lemma holds with $\lambda \circ \rho$. 
\end{proof}

%%%%%%%%%%%%%%%%%%%%%%%%%%%%%%%%%
\section{Grauert's Approximation Theorem} \label{sec:4}
We start with "Cartan's lemma", which is an important step in Grauert's proof of his approximation theorem. As it generalizes straightforwardly to convergent power series over any real valued field (cf. \cite[Lemma 8.2.4]{dJP00}), the proof is omitted here.
It says (roughly speaking) the following. Given  a linear system of equations
$$\overset{r}{\underset{i=1}{\sum}} a_i  Z_i + 
\overset{r}{\underset{i=1}{\sum}} b_j Y_j = C$$
with given
$a_1, \dots, a_r$, $b_1, \dots, b_t \in K\{x\}^N$, $C\in  (K\{x\}[s])^N$ (with components homogeneous polynomials of fixed degree in $s$) and $Z_i, Y_j$ indeterminates. 
Assume that there exists a solution ${\sum} a_i  \bar{z}_i + {\sum} b_j \bar{y}_j = C$ with  $\bar{z}_i \in K\{x\}[s]$ and  $\bar{y}_j \in K[s]$. Then there exist a solution 
 ${\sum} a_i  {z}_i + {\sum} b_j {y}_j = C$ such that the norms of 
${z}_i \in K\{x\}[s]$ and ${y}_j \in K[s]$ can be bounded by the norm of $C$.\\

For $\rho \in (\R_{>0})^n$ and $\tau\in (\R_{>0})^l$ let $$B_{(\rho,\tau)}= \{f\in K[[x,s]]\text{  } | \text{  } \|f\|_{(\rho,\tau)}<\infty\},$$
with $x=(x_1,\ldots,x_n)$ and $s=(s_1,\ldots,s_l)$. 

\begin{Lemma}[Cartan's Lemma]\label{car}
 Fix $\bar\rho\in  (\R_{>0})^n$ and $\bar\tau\in (\R_{>0})^l$.  Let $a_1, \dots, a_r$, $b_1, \dots, b_t \in B^N_{\bar\rho}\subset K\{x\}^N$.  There exists a $\rho \in(\R_{>0})^n$, $\rho
 \le \bar{\rho}$, and $L = L(\rho) \in \R_{>0}$ with the following property:

For all $\tau \in(\R_{>0})^l$ with $\tau \leq \bar{\tau}$ and 
for all  $C \in B^N_{(\rho,\tau)}$, $C = \sum_{|\alpha|= e,k} C^{(\alpha,k)}
  s^\alpha e_k$ with $C^{(\alpha,k)} \in K\{x\}$ and for all homogeneous polynomials  $\bar{y}_1, \dots,
\bar{y}_t\in K[s]$ and $\bar{z}_1,\ldots \bar{z}_r \in K\{x\}[s]$ of degree $e$ in $s$ such that
$$\overset{r}{\underset{i=1}{\sum}} a_i
  \bar{z}_i + \overset{t}{\underset{j=1}{\sum}} b_j \bar{y}_j = C,$$
 there exist homogeneous polynomials $z_i\in K\{x\}[s]$, $i = 1,
\dots, r$ and $y_j \in K[s]$,  $j= 1, \dots, t$ of degree $e$ in $s$, such that
$$\overset{r}{\underset{i=1}{\sum}} a_i z_i +
  \overset{t}{\underset{j=1}{\sum}} b_j y_j = C,$$
 and $\|z_i\|_{(\rho,\tau)} \le L \|C\|_{(\rho,\tau)}$ and
  $\|y_j\|_\tau  \le L\|C\|_{(\rho,\tau)}$ for all $i$ and $j$,
  where the constant $L$ does not depends on $C$, $e$ and $\tau$.
\end{Lemma}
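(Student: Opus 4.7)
The plan is to divide the given data by $\{a_1,\dots,a_r\}$ using the Division Theorem (Theorem~\ref{thm.div}), extract from the hypothesis a concrete identity among normal forms, and then reduce the remaining task to a finite-dimensional $K$-linear problem whose bounded solvability follows from completeness of $K$.

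First we reduce to the case that $K$ is complete, exactly as in the proof of Theorem~\ref{thm.div} (using $\bar K\{x\}\cap K[[x]]=K\{x\}$), and fix the local degree ordering on $K[[x,s]]^N$, so that $\LM(a_i)$ involves only the $x$-variables. Applying Theorem~\ref{thm.div} with divisors $a_1,\dots,a_r$ to each $b_j$ and to $C$ yields decompositions $b_j=\sum_i a_iu_{ij}+\tilde b_j$ and $C=\sum_i a_iq_i^C+\tilde C$, with $\tilde b_j,\tilde C$ the normal forms. Since the $a_i$ depend only on the $x$-block, Remark~\ref{normNF} (with the $s$-variables playing the role of the "free" block) supplies a $\rho\le\bar\rho$ and a constant $L_1$ for which
$$\|u_{ij}\|_\rho+\|\tilde b_j\|_\rho\le L_1\|b_j\|_\rho,\qquad \|q_i^C\|_{(\rho,\tau)}+\|\tilde C\|_{(\rho,\tau)}\le L_1\|C\|_{(\rho,\tau)}$$
for every $\tau\le\bar\tau$.

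Substituting $b_j=\sum_i a_iu_{ij}+\tilde b_j$ into the given identity and exploiting that $\bar y_j\in K[s]$ contains no $x$-variable (so multiplying $\tilde b_j$ by $\bar y_j$ preserves the property that no monomial is divisible by any $\LM(a_i)$), uniqueness of the normal form in Theorem~\ref{thm.div} forces $\tilde C=\sum_j\tilde b_j\bar y_j$. It therefore remains to produce $y_j\in K[s]$, homogeneous of degree $e$, with $\sum_j\tilde b_jy_j=\tilde C$ and $\|y_j\|_\tau\le L_2\|\tilde C\|_{(\rho,\tau)}$; once we have this, setting $z_i:=q_i^C-\sum_j u_{ij}y_j$ gives $\sum_i a_iz_i+\sum_jb_jy_j=C$, and sub-multiplicativity combined with the bounds above yields $\|z_i\|_{(\rho,\tau)}\le L\|C\|_{(\rho,\tau)}$ for a constant $L$ built from $L_1$, $L_2$ and the $\|u_{ij}\|_\rho$. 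Writing $y_j=\sum_{|\beta|=e}(y_j)_\beta s^\beta$ and $\tilde C=\sum_\beta\tilde C^{(\beta)} s^\beta$, the equation decouples into the $K$-linear systems $\sum_j(y_j)_\beta\tilde b_j=\tilde C^{(\beta)}$, one per $\beta$, posed inside the finite-dimensional $K$-subspace $W:=K\tilde b_1+\cdots+K\tilde b_t$ of $B_\rho^N$; existence of a solution for each $\beta$ is guaranteed by the coefficients of $\bar y_j$ at $s^\beta$. This is the step where completeness of $K$ is essential: on the finite-dimensional $K$-vector space $W$ all norms are equivalent (with the valuation non-trivial, cf. Remark~\ref{rm.rvf}), so after fixing a basis drawn from $\tilde b_1,\dots,\tilde b_t$ the coordinate map admits a bounded $K$-linear section of operator norm at most some $L_2$ depending only on that basis, giving $(y_j)_\beta$ with $|(y_j)_\beta|\le L_2\|\tilde C^{(\beta)}\|_\rho$ uniformly in $\beta$. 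Summing against $\tau^\beta$ delivers $\|y_j\|_\tau\le L_2\|\tilde C\|_{(\rho,\tau)}\le L_1L_2\|C\|_{(\rho,\tau)}$, as required.

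I expect the main difficulty to be bookkeeping: one must verify that Remark~\ref{normNF}, in the bigraded setting, really produces a \emph{single} $\rho$ for which the division bounds hold uniformly for all $\tau\le\bar\tau$, and that the operator norm $L_2$ from the finite-dimensional norm-equivalence argument on $W$ genuinely depends on none of $\beta$, $C$, $\tau$, $e$. Once these two uniformities are established, the constant $L$ of the Lemma is a fixed polynomial expression in the quantities just mentioned, independent of the data $C$, $\tau$, $\bar z_i$, $\bar y_j$ and of the degree $e$.
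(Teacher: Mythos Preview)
Your strategy is the paper's: pass to normal forms modulo $\langle a_1,\dots,a_r\rangle$ via the Division Theorem, extract the identity $\sum_j\tilde b_j\bar y_j=\tilde C$ by uniqueness, solve that with controlled $y_j$, and rebuild $z_i=q_i^C-\sum_ju_{ij}y_j$. The one substantive difference is how you bound the solution of $\sum_j\tilde b_jy_j=\tilde C$. The paper does this by explicit linear algebra over $K$: it picks finitely many coefficient functionals $w\mapsto w_{\alpha_j,k_j}$ on which a sub-family of the $b_j$ is independent and inverts by Cramer's rule, obtaining $(y_i)_\beta=M^{(i,\beta)}/M$ and the concrete constant $L=\max_{i,j}|M_{ij}|/(\rho^{\alpha_j}|M|)$. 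This needs no completeness hypothesis. Your route via norm equivalence on the finite-dimensional space $W$ is more conceptual but lands at the same place once unwound; what it buys is that you never write down a minor.

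One point deserves care: your reduction ``to $K$ complete, exactly as in Theorem~\ref{thm.div}'' does not transfer verbatim. In Theorem~\ref{thm.div} the $q_i,r$ are first constructed \emph{formally over $K$} and only then shown to lie in $\bar K\{x\}$, whence in $K\{x\}=\bar K\{x\}\cap K[[x]]$. Here, by contrast, your $(y_j)_\beta$ come from a norm-dependent section and would a priori live only in $\bar K$; passing to $\bar K$ and solving there does not give back a $K$-solution. The easy repair is to observe that the section you describe is actually $K$-linear (it is fixed by a $K$-basis of $W$), so its values on $\tilde C^{(\beta)}\in W$ automatically lie in $K^t$; the operator bound $L_2$ then follows from norm equivalence applied to the $\bar K$-linear extension on $W\otimes_K\bar K$, since $\|w\|_\rho$ is unchanged under scalar extension. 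With this adjustment your argument is complete and essentially equivalent to the paper's; the Cramer-rule version simply makes the bounded section explicit and sidesteps the completeness detour altogether.
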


This result and the results of the previous section are used to generalize Grauert's approximation theorem to arbitrary real valued fields $K$.
For the proof of the approximation theorem, we require the following technical lemma; note that the proof of \cite[Lemma 8.2.5]{dJP00} carries over directly to our setting:

\begin{Lemma}\label{tech}
Let $Y=(Y_1,\ldots,Y_t)$ and 
  $F(x,s,Y) \in B^N_{(\rho, \bar{\tau},\sigma)}\subset K \{x,s,Y\}^N$,  $(\rho, \bar{\tau},\sigma) \in \R^{n+l+t}$.  Let $I \subset \C\{s\}$ be an
  ideal.  Then for all
  $\varepsilon > 0 $, there exists an $L = L(\varepsilon) > 0 $
and a $\tau \in (\R_{>0})^l$, $\tau \leq \bar{\tau}$, 
with the following property.\\
  Let   $ e \in \N$ and $y ^{(e)} (x,s) = (y^{(e)}_1 (x,s),
  \dots, y^{(e)}_t (x,s))\in K\{x\}[s]^t$, $y_i^{(e)}$ in normal form with respect to $I$ of degree $\leq e$ in $s$.
Let $F_e \in K\{x,s\}^N$ be the degree $e+1$ part
in $s$  of the normal form 
of $F(x,s,y^{(e)}(x,s))$ with respect to $I$.
  
If $\| y_\nu^{(e)}(x,s) \|_{(\rho, {\lambda\circ \tau})} \leq L\; {\rm for} \;
\nu =1, \ldots,t$ and some $\lambda$,  $0 < \lambda \leq 1$, then

\[
\|F_e \|_{(\rho, {\lambda\circ\tau})} \leq L \varepsilon.
\]
\end{Lemma}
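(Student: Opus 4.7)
The plan is to decompose $F(x,s,Y) = F(x,0,Y) + \tilde F(x,s,Y)$ with $\tilde F(x,s,Y) := F(x,s,Y) - F(x,0,Y) \in \langle s\rangle K\{x,s,Y\}^N$, substitute $Y = y^{(e)}(x,s)$, and exploit the hypothesis that each $y^{(e)}_\nu$ is in normal form with respect to $I$. The core observation is: for $|\gamma|\leq 1$, the summand $F_\gamma(x,0)\,y^{(e),\gamma}(x,s)$ of $F(x,0,y^{(e)})$ is an $s$-polynomial of $s$-degree $\leq e$ that already lies in normal form, since multiplication by $F_\gamma(x,0)\in K\{x\}$ leaves the $s$-monomial structure of $y^{(e)}_\nu$ unchanged and the leading monomials of $I$ are pure $s$-monomials. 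Consequently its normal form equals itself and its $s$-degree $e+1$ component vanishes, contributing $0$ to $F_e$. The remaining contributions either involve $\tilde F$, whose $(\rho,\tau)$-norm vanishes as $\tau\to 0$ (for $|\gamma|\leq 1$), or come from $|\gamma|\geq 2$ and therefore pick up at least a factor of $L^2$.

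Procedurally, I would first fix $\tau_0 \leq \bar\tau$ small enough that Remark \ref{normNF} applies to a standard basis of $I\subset K\{s\}$ with contraction factor $\frac{1}{2}$, giving $\|\NF(g,I)\|_{(\rho,\lambda\circ\tau)} \leq 2\|g\|_{(\rho,\lambda\circ\tau)}$ uniformly in $\lambda\in(0,1]$ and $\tau\leq\tau_0$. Using $K$-linearity of $\NF$, I would then split $F_e$ into the contribution from $F(x,0,y^{(e)})$ and the contribution from $\tilde F(x,s,y^{(e)})$. After the normal-form cancellation above, the first reduces to its $|\gamma|\geq 2$ part, bounded by $2L^2 C_2$ for a constant $C_2$ depending only on $\|F\|_{(\rho,\bar\tau,\sigma)}$ and $\min_i\sigma_i$. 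The second, expanded in $\gamma$, gives a bound of the form $2\bigl(C_0(\tau) + L\,C_1(\tau) + L^2 C_2'\bigr)$ where $C_0(\tau), C_1(\tau) \to 0$ as $\tau\to 0$ and $C_2'$ is a fixed constant.

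To conclude $\|F_e\|_{(\rho,\lambda\circ\tau)} \leq L\varepsilon$, I would choose $L = L(\varepsilon)$ small enough (and bounded by $\min_i\sigma_i$) so that $2L(C_2+C_2') \leq \varepsilon/2$, and then shrink $\tau \leq \tau_0$ so that $2C_0(\tau)/L + 2C_1(\tau) \leq \varepsilon/2$. The main obstacle lies in justifying the normal-form cancellation rigorously: one must carefully use the hypothesis that $y^{(e)}_\nu$ is in normal form, together with the structural fact that the leading monomials of $I$ are pure $s$-monomials, to conclude that $F_\gamma(x,0)\,y^{(e),\gamma}$ remains in normal form precisely when $|\gamma|\leq 1$. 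This asymmetry between the $|\gamma|\leq 1$ and $|\gamma|\geq 2$ cases is exactly the source of smallness in $L$ that powers the iteration in Grauert's approximation theorem.
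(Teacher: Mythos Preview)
Your proposal is correct and follows essentially the same line as the paper's proof. The paper expands $F(x,s,Y)=\sum_\nu F_\nu(x,s)Y^\nu$, splits $F_\nu(x,s)=F_\nu(x,0)+F_\nu'(x,s)$ for $|\nu|\le 1$, observes exactly your key point that $F_0(x,0)+\sum_{|\nu|=1}F_\nu(x,0)\,y^{(e)}(x,s)^\nu$ is already in normal form and of $s$-degree $\le e$ (hence contributes nothing to $F_e$), and then bounds the remaining pieces by choosing first $L$ small (to control the $|\nu|\ge 2$ tail via a geometric series) and then $\tau$ small (to make $\|F_\nu'\|_{(\rho,\tau)}$ small for $|\nu|\le 1$), together with the normal-form estimate from Remark~\ref{normNF}. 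The only cosmetic difference is that the paper does not split the $|\nu|\ge 2$ terms into $s=0$ and $s\ne 0$ parts as you do, but treats $\sum_{|\nu|\ge 2}F_\nu(x,s)y^{(e)}(x,s)^\nu$ in one block; this changes nothing of substance.
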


We are now prepared to formulate and prove Grauert's Approximation Theorem.\\
  We consider four sets of variables, $x = (x_1, \ldots, x_n)$, $s =
  (s_1, \ldots, s_l)$, $Y = (Y_1, \ldots , Y_p)$ and $Z = (Z_1,
  \ldots, Z_q)$. 

\begin{Definition}\label{def.sol}
Let $I \subset K\{s\}$ be an ideal and let $\fm:=\langle s_1,\ldots,s_l\rangle\subset K\{s\}$.  Consider an element $F = (F_1, \ldots, F_N)  \in K\{x,s,Y,Z\}^N$.

(1) A {\em solution of order $e$} of the equation $F \equiv 0 \; {\rm mod} \;I$
  is a pair \\
\noindent $(y, z) \in K[s]^p \times K\{x\}[s]^q$ such that
\[
F(x,s, y(s), z(x,s)) \equiv 0 \quad {\rm mod} \; (I+ \fm^{e+1}) \cdot
K\{x,s\}^N.
\]

(2) An {\em analytic solution} of the equation $ F \equiv 0$ mod $I$ is a
  pair $(y, z) \in K\{s\}^p \times K\{x,s\}^q$ 
  such that
\[
F(x,s, y(s), z(x,s)) \equiv 0 \quad {\rm mod} \; I\cdot K\{x,s\}^N, 
\]
\end{Definition}
We can now formulate the theorem.

\begin{Theorem}[Grauert's Approximation Theorem for real valued fields]
\label{grauertapprox}
Let $e_0 \in \N$. Suppose that the system of equations 
$F \equiv 0 \; {\rm mod} \; I$
has
a solution $(y^{(e_0)}, z^{(e_0)})$ of order $e_0$.\\  Suppose, moreover,
that for all $ e \ge e_0$ every solution 
$(y^{(e)},z^{(e)})$ of order $e$  with
$y^{(e)} \equiv y^{(e_0)} \mod
{\fm}^{e_0+1}$, 
and $z^{(e)} \equiv z^{(e_0)} \mod {\fm}^{e_0+1}$,
extends to a solution  $(y^{(e)} +
u^{(e)}, z^{(e)} + v^{(e)})$ of order $e+1$,
 with
$u^{(e)}\in K[s]^p$ and $v^{(e)} \in K\{x\}[s]^q$ homogeneous of
degree $e+1$ in $s$.

Then the system of equations $F \equiv 0$ mod $I$ has
an analytic solution $(y, z)$, with $y \equiv y^{(e_0)}$ mod
${\fm}^{e_0+1}$, and $z \equiv z^{(e_0)}$ mod ${\fm}^{e_0+1}$.
\end{Theorem}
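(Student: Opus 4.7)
The plan is to construct the analytic solution as the formal limit of a sequence $(y^{(e)}, z^{(e)})_{e \ge e_0}$ built inductively from the hypothesis, and then prove that this formal limit is actually convergent. The formal limit $y := y^{(e_0)} + \sum_{e \ge e_0} u^{(e)}$, $z := z^{(e_0)} + \sum_{e \ge e_0} v^{(e)}$ is well-defined because $u^{(e)} = y^{(e+1)} - y^{(e)}$ and $v^{(e)} = z^{(e+1)} - z^{(e)}$ are homogeneous of strictly increasing degree $e+1$ in $s$; the real work is to choose these increments (via Cartan's Lemma) so that the resulting series converge in some polyradius. After the translation $Y \mapsto Y + y^{(e_0)}(0)$, $Z \mapsto Z + z^{(e_0)}(x,0)$, I may assume $y^{(e)}(0) = 0$ and $z^{(e)}(x, 0) = 0$ for every $e \ge e_0$, making the initial norms arbitrarily small in any polyradius $(\rho, \tau)$ with $\tau$ small.

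\textbf{Linearization of the extension step.} Expanding $F(x, s, y^{(e)} + u^{(e)}, z^{(e)} + v^{(e)})$ in Taylor around $(y^{(e)}, z^{(e)})$, noting that (a) quadratic and higher cross terms in $(u^{(e)}, v^{(e)})$ have $s$-degree $\ge 2(e+1) \ge e+2$, and (b) the parts of $(\partial F/\partial Y_j)(x, s, y^{(e)}, z^{(e)})$ and $(\partial F/\partial Z_i)(x, s, y^{(e)}, z^{(e)})$ with positive $s$-degree, once multiplied by $u^{(e)}_j, v^{(e)}_i$, contribute to $s$-degree $\ge e+2$, yields
\[
F(x, s, y^{(e)} + u^{(e)}, z^{(e)} + v^{(e)}) \equiv F(x, s, y^{(e)}, z^{(e)}) + \sum_j b_j(x)\, u^{(e)}_j + \sum_i a_i(x)\, v^{(e)}_i \pmod{\fm^{e+2}},
\]
where $a_i(x) := (\partial F/\partial Z_i)(x,0,0,0)$ and $b_j(x) := (\partial F/\partial Y_j)(x,0,0,0)$ lie in $K\{x\}^N$ and are independent of $e$. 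Taking the degree-$(e+1)$ part in $s$ of the normal form modulo $I$, the condition for order-$(e+1)$ solvability becomes the linear equation
\[
\sum_i a_i(x)\, v^{(e)}_i + \sum_j b_j(x)\, u^{(e)}_j \equiv -F_e \pmod{I},
\]
with $F_e$ the obstruction of Lemma \ref{tech}. The theorem's hypothesis guarantees such $(u^{(e)}, v^{(e)})$ exist.

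\textbf{Quantitative control via Cartan and Lemma \ref{tech}.} Fix $\rho$ so that $F, a_i, b_j$ lie in the appropriate $B^N$ on polyradius $(\rho, \bar\tau)$, and let $L_0 = L_0(\rho)$ be the constant from Cartan's Lemma (Lemma \ref{car}) applied to the data $(a_1, \ldots, a_q, b_1, \ldots, b_p)$. Choose $\varepsilon > 0$ small enough that $L_0 \varepsilon$ is tiny, and let $L = L(\varepsilon)$ and $\tau$ be as in Lemma \ref{tech}. Cartan's Lemma, applied to the above linear equation at polyradius $(\rho, \sigma)$ for any $\sigma \le \tau$, produces $(u^{(e)}, v^{(e)})$ with $\|u^{(e)}\|_{(\rho, \sigma)}, \|v^{(e)}\|_{(\rho, \sigma)} \le L_0 \|F_e\|_{(\rho, \sigma)}$. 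The plan is to exploit the homogeneity of $F_e, u^{(e)}, v^{(e)}$ of degree $e+1$ in $s$: for $\mu < 1$, $\|F_e\|_{(\rho, \mu\tau)} = \mu^{e+1}\|F_e\|_{(\rho, \tau)}$ and similarly for $u^{(e)}, v^{(e)}$. Combining this with Lemma \ref{tech} (applied at polyradius $\lambda\tau$ with the fixed threshold $\|y^{(e)}\|, \|z^{(e)}\|_{(\rho, \lambda\tau)} \le L$) and Cartan on a strictly smaller polyradius $\mu\tau$ with $\mu < \lambda$, gives the key geometric bound $\|u^{(e)}\|_{(\rho, \mu\tau)}, \|v^{(e)}\|_{(\rho, \mu\tau)} \le L_0 L \varepsilon \cdot (\mu/\lambda)^{e+1}$. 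The induction to maintain $\|y^{(e)}\|, \|z^{(e)}\|_{(\rho, \lambda\tau)} \le L$ is closed by choosing the initial polyradius small (base case via the translation) and balancing the constants $\varepsilon, \lambda, \mu$ so that the cumulative contribution of the increments stays within the threshold.

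\textbf{Convergence and main obstacle.} The geometric estimate $\|u^{(e)}\|_{(\rho, \mu\tau)}, \|v^{(e)}\|_{(\rho, \mu\tau)} \le L_0 L \varepsilon \cdot (\mu/\lambda)^{e+1}$ gives $\sum \|u^{(e)}\|_{(\rho, \mu\tau)}, \sum \|v^{(e)}\|_{(\rho, \mu\tau)} < \infty$; by the completeness of $B^N_{(\rho, \mu\tau)}$ (Lemma \ref{L1}), the limits $y \in K\{s\}^p$ and $z \in K\{x,s\}^q$ are convergent. Continuity of substitution into $F$ gives $F(x, s, y, z) \equiv 0 \pmod{I}$, and $y \equiv y^{(e_0)}$, $z \equiv z^{(e_0)}$ modulo $\fm^{e_0+1}$ by construction. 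The principal difficulty is the tension between the two hypotheses of Lemma \ref{tech} and Cartan: Lemma \ref{tech} gives only a constant (non-decaying) bound $L\varepsilon$ on $\|F_e\|$ in any fixed polyradius, while Cartan's Lemma transfers this to a constant bound on $\|u^{(e)}\|, \|v^{(e)}\|$, which by itself would lead to linear (hence unbounded) growth of $\|y^{(e)}\|$ and destroy the induction. The resolution is to separate the two roles of the polyradius: use the larger polyradius $\lambda\tau$ to verify the hypothesis of Lemma \ref{tech}, and use a strictly smaller polyradius $\mu\tau$ (with $\mu < \lambda$) to apply Cartan, so that the ratio $(\mu/\lambda)^{e+1}$ from the homogeneity of $F_e$ yields genuine geometric decay of the increments on the smaller polyradius, where the series converges.
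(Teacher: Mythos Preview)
Your overall strategy---linearize the extension step via Taylor, invoke Cartan's Lemma to control the increments $(u^{(e)},v^{(e)})$, and use Lemma~\ref{tech} to bound the obstruction $F_e$---is exactly the paper's approach, and you correctly identify the crux: a merely constant bound on $\|u^{(e)}\|$ threatens the induction. However, your proposed resolution does not close the loop. You want to maintain $\|y^{(e)}\|_{(\rho,\lambda\tau)}\le L$ so that Lemma~\ref{tech} applies at polyradius $\lambda\tau$; but at \emph{that} polyradius Cartan still gives only $\|u^{(e)}\|_{(\rho,\lambda\tau)}\le L_0\,\|F_e\|_{(\rho,\lambda\tau)}\le L_0 L\varepsilon$, a constant independent of $e$. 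Hence $\|y^{(e)}\|_{(\rho,\lambda\tau)}$ grows at least linearly in $e$ and eventually exceeds $L$, destroying the hypothesis of Lemma~\ref{tech}. No fixed choice of $\varepsilon,\lambda,\mu$ prevents this: the geometric decay you correctly obtain at the \emph{smaller} polyradius $\mu\tau$ is irrelevant for maintaining the invariant, which lives at the \emph{larger} polyradius $\lambda\tau$, and passing from $\mu\tau$ back up to $\lambda\tau$ costs exactly the factor $(\lambda/\mu)^{e+1}$ that cancels your gain.

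The paper sidesteps this by changing the inductive invariant. Instead of bounding the full solution $\|y^{(e)}\|$, it bounds each \emph{homogeneous piece} separately: the invariant is $\|y_e\|_\tau < L$, where $y_e$ denotes the degree-$e$ part of $y^{(e)}$ in $s$. Since $u^{(e)}=y_{e+1}$, the estimate $\|u^{(e)}\|_\tau < L$ coming from Cartan \emph{is} the next instance of the invariant---there is no accumulation to worry about. The calibration that makes this work is the specific choice $\varepsilon = 1/M$ (with $M$ the Cartan constant), so that Lemma~\ref{tech} and Cartan cancel exactly: $\|u^{(e)}\|_\tau \le M\,\|F_e\|_\tau \le M\cdot L\varepsilon = L$. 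Convergence of $y=\sum_e y_e$ then follows at the halved polyradius $\tau/2$ via $\|y_e\|_{\tau/2}=2^{-e}\|y_e\|_\tau\le 2^{-e}L$. Your two-polyradius idea is in spirit this same halving trick, but applied at the wrong stage: the halving should be used \emph{only once, at the end}, to sum the uniformly bounded homogeneous pieces, not inside the induction to manufacture decay that is then lost when you return to verify Lemma~\ref{tech}.
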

Note that solutions of order $e$  are (1-step) nested as $y^{(e)}$ depends only on $s$ and $z^{(e)}$ on $x$ and $s$. The assumption in the theorem that every solution modulo $e$ extends to a solution modulo $e+1$ leads obviously to a formal nested solution. 
The assumption in Grauert's approximation theorem, that {\em every} solution modulo $e$ extends to a solution modulo $e+1$, is stronger than  in the nested  Artin Approximation Theorem \ref{thm.nestapprox}, where only the existence of {\em one} formal solution is assumed. But the claim of the theorem, that an analytic nested solution exists, is also stronger, since the nested  Artin Approximation Theorem does not hold in the analytic category by \cite{Ga71}.\\

\begin{proof}[Proof of Theorem \ref{grauertapprox}]
The assumption that every solution modulo $e$ extends to a solution modulo $e+1$ leads to a formal solution but not necessarily to a convergent one. We have to prove that for a suitable choice of the extension the solution will be
convergent. We cannot use Artin's nested approximation since nested approximation is wrong in general in the
analytic case.\\
  The first step of the proof is a reduction to the case  that $y^{(e_0)}$ and
  $z^{(e_0)}$ are in $\fm$ and
  $F\bigr|_{s=0} \in K\{x\}^N$.
Let
\[
y^{(e_0)} = c + \sum s_\nu ( c_\nu +y^{(e_0)\prime}_\nu) \; \quad
  z^{(e_0)} = d + \sum s_\nu(d_\nu +z^{(e_0)\prime}_\nu).
\]
Here $c,d, c_\nu$ and $d_\nu$ are in $ K\{x\}$. Moreover 
  $y^{(e_0)\prime}_\nu = (y^{(e_0)\prime}_{1,\nu},\ldots,y^{(e_0)\prime}_{p,\nu})$,
  $z^{(e_0)\prime}_\nu = (z^{(e_0)\prime}_{1,\nu },\ldots,z^{(e_0)\prime}_{q,\nu })$ with
  $y^{(e_0)\prime}_{i,\nu } \in \fm $, $z^{(e_0)\prime}_{i,\nu } \in \fm \cdot K\{x,s\}$.
 Define new variables  $Y' = \{Y'_{i\nu }\},
  Z' = \{Z'_{i\nu }\}$, and define 
\[
F^\prime(x,s,Y', Z') := F(x,s,c + \sum s_\nu(c_\nu + Y'_\nu), d +
  \sum s_\nu(d_\nu+ Z'_\nu) ).
\]
  Now $F'$ has a solution $(y^{(e_0)'}, z^{(e_0)'})$ of order
$e_0$.  Replacing $F, y^{(e_0)}$, $z^{(e_0)}$ by $F', y^{(e_0)'},
z^{(e_0)'}$ proves the claim.

In the next step we reduce to the case that our solutions are in normal form with respect to $I$.

 If $(
y, z)$ is a solution of $F \equiv 0$ modulo $I$, then it follows
immediately that $(\NF(y,I), \NF(z,I))$ (see Definition \ref{def.NF}) is also a solution of $F \equiv
0$ modulo $I$. The same is true for solutions of order $e$,
as obviously $\sum a_\alpha s^\alpha$  being in normal form
implies $\sum_{|\alpha| \leq e} a_\alpha s^\alpha$ is in normal form.

  Let 
$(\bar{\rho}, \bar{\tau}, {\sigma}) \in (\R_{>0})^{n+l+p+q}$ such that $F
\in B^N_{(\bar{\rho}, \bar{\tau}, {\sigma})}$.  We consider the elements
\[
\frac{\partial F}{\partial Y}_{i}\bigl|_{s = 0} \in K\{x\}^{N} 
\text{  and } 
\frac{\partial F}{\partial Z}_{j}\bigl|_{s = 0} \in K\{x\}^N,
\quad {\rm for } \ i=1, \dots, p \;{\rm and} \;j=1,
\dots, q.
\]
By Cartan's
 Lemma \ref{car} we can choose  a $\rho \in \R^{n}_+$, $\rho \le
\bar{\rho}$ and $M = M(\rho) \ge 1$ depending on $\tfrac{\partial F}{\partial
  Y}\bigl|_{s = 0}$ and $\tfrac{\partial F}{\partial ZZ}\bigr|_{s = 0}$
with the following property: Let $G \in B^N_{(\rho, \bar{\tau})}$, and
$\bar{u}_1, \ldots, \bar{u}_p \in K[s]$ and $\bar{v}_1, \ldots,
\bar{v}_q \in K\{x\}[s]$ such that
\[
G + \sum_{i=0}^p\bar{u}_i \frac{\partial F}{\partial Y_i}\bigl|_{s =
  0} + \sum_{i=0}^q\bar{v}_i \frac{\partial F}{\partial Z_i}\bigr|_{s
  = 0}=0,
\]
and $G,\, \bar{u}_1, \dots, \bar{u}_p$, $\bar{v}_1, \dots,
\bar{v}_q$ are homogeneous with respect to $s$ of degree $e + 1$
for $e \ge e_0$.
Then there exist $u_1, \ldots, u_p \in K[s]$ and 
$v_1, \ldots, v_q \in K\{x\}[s]$ satisfying this equation being
homogeneous as before, where, moreover, 
\[
\|u_i\|_\tau, \; \|v_i\|_{(\rho, {\tau})} \leq M \|G\|_{(\rho,{ \tau})}
\;{\rm for\;  all} \; \tau \leq \bar{\tau}.
\]
We take $\varepsilon = \tfrac{1}{M}$. Let $L = L(\varepsilon)$
and $\tau \leq \bar{\tau}$  as in 
Lemma \ref{tech}, applied to $F(x,s,Y, Z)$ and let $(y^{(e_0)},
z^{(e_0)})$ be a solution of order $e_0$.  Because $y^{(e_0)} \in \fm$ and
$z^{(e_0)} \in \fm K\{x,s\}$ we can choose,
by scaling,  the polyradius
$\tau$ so small that, moreover,
$\|y^{(e_0)}\|_\tau<L$ and $ \|z^{(e_0)}\|_{(\rho,\tau)} < L$.

We will use induction to construct 
for all $ e \ge e_0+1$ solutions $(y^{(e)}, z^{(e)})$ of order
$e$ with $ (y^{(e)}, z^{(e)}) \equiv (y^{(e-1)}, z^{(e-1)})$
modulo ${\fm}^{e}$ and with the estimates
\[
\|y_e\|_\tau, \quad \|z_e\|_{(\rho, \tau)}   <L.
\]
Here $y_e$ and $z_e$ are the degree $e$ parts in $s$
of $y^{(e)}$ and $z^{(e)}$. 
Let $(y^{(e)}, z^{(e)})$ be a solution of order $e$,
such that $(y^{(e)}, z^{(e)})$
are polynomials in $s$ of degree $\leq e$, in normal form
with respect to $I$ and $\|y_e\|_\tau \leq L$, $\|z_e\|_{(\rho, 
\tau)} \leq L.$
   Let
$F_e$ be the degree $e+1$ part in  $s$ of the normal form of
$F(x,s,y^{(e)},z^{(e)})$.  By Lemma \ref{tech}, $\|F_e\|_{(\rho,
  \tau)} < \varepsilon L = \tfrac{L}{M}$.  By assumption, there exist
$\bar{u}^{(e)}$ and $\bar{v}^{(e)}$
homogeneous  of degree $e+1$ in $s$
and in normal form with respect to $I$  such that
$(y^{(e)} + \bar{u}^{(e)}, z^{(e)} + \bar{v}^{(e)})$ is a
solution of order $e+1$.  By Taylor expansion
and taking the degree $e+1$ part of the normal form we obtain 
\[
F_e + \sum_{i=1}^p \dfrac{\partial F}{\partial Y_i}|_{s =
  0}\bar{u}_i^{(e)} + \sum_{i=1}^q\dfrac{\partial F}{\partial Z_i}|_{s
  = 0}\bar{v}_i^{(e)}=0. 
\]
By the choice of $M$ there exist
such $u_i^{(e)}$ and $v_i^{(e)}$, having moreover the estimates
\[
\|u_i^{(e)}\|_\tau, \; \| v_i^{(e)}\|_{(\rho, \tau)} < M \|F_e\|_{(\rho, \tau)} < M\dfrac{L}{M} =L.
\]
Now define $y^{(e+1)}:= y^{(e)} + u^{(e)}$ and
 $z^{(e+1)}:= z^{(e)} + v^{(e)}$. 
Then $y_{e+1} =  u^{(e)}$ and $z_{e+1}= v^{(e)}$,
and the desired estimate holds.

We now look at the formal solution $y= \sum_{e= 0}^\infty y_e$,
$z= \sum_{e= 0}^\infty z_e$. Then,
because $y_e$ and $z_e$ are homogenous of degree $e$ with
respect to $s$, and $\|y_e\|_\tau < L$,  $\|z_e\|_{(\rho,\tau)} < L$
we obtain
\[
\|y\|_{\tau/2} < \sum_{e=0}^\infty \left(\tfrac{1}{2}\right)^e \cdot L
= 2L,\quad
\|z\|_{(\rho, \tau/2)} < \sum_{e=0}^\infty \left(\tfrac{1}{2}\right)^e \cdot L 
= 2L,
\]
showing the convergence of $y$ and $z$.
This finishes the proof of Grauert's Approximation Theorem. 
\end{proof}
%%%%%%%%%%%%%%%%%%%%%%%%%%%%%%%%%

\section{Unfoldings and the Splitting Lemma}\label{sec5}

Let $K$ be again a real valued field and $K\{x\}$ the convergent power series ring over $K$ as defined in Section \ref{sec:1}.\\
As an application of the Approximation Theorem \ref{grauertapprox} we prove first in Theorem \ref{thm.suunf} the existence of a semiuniversal unfolding of an isolated hypersurface singularity $f \in \fm^2 \subset K\{x\}$, $\fm =\langle x\rangle=\langle x_1,...,x_n\rangle$, allowing a translation in the source. We use this  to prove the splitting lemma in singularity theory in arbitrary characteristic (Theorem \ref{thm.split}). Not only the result is new, but also the method. It is interesting that we use a determinacy bound for isolated singularities (Proposition \ref{prop.rdet2}) (in a perhaps unexpected way) to prove the splitting lemma for {\em non-isolated} $f \in K\{x\}$.
We would like to point out that our proof of the splitting lemma is much easier than the fairly complicated original proof by R. Thom (for differentiable functions) in his book on elementary catastrophes.

In the last section (Section \ref{sec6}) we deduce from the approximation theorem the existence of a semiuniversal deformation for an arbitrary analytic algebra $K\{x\}/I$ with isolated singularity, generalizing  Grauert's theorem for complex analytic singularities in \cite{Gr}. 

The results of this and the last section encompass and go beyond previously known results 
(for further results on unfoldings see \cite[Chapter 3.3 and 3.5 (by D. Kerner)]{GLS25}).\\

The notion of unfolding in the form we need, is as follows:

\begin{Definition} \label {def.unf}
 Let $f \in \fm \subset K\{x\}$. 
 \begin{enumerate}
 \item A {\em (p-parameter) unfolding} of $f$ is a power series 
 $F \in K\{x,s\}$, $s= (s_1,...,s_p)$, such that $F(x, 0) =f$. 
If $F(x,s) = f(x)$ we say that $F$ is the {\em constant unfolding} of $f$. $K\{s\}$ is called (the algebra of) the {\em parameter space} of $F$.
  \item Let $F \in K\{x,s\}$ and $G \in K\{x,t\}$, $t = (t_1,...,t_q)$,
 be two unfoldings of $f$. 
 A {\em right-left morphism from $G$ to $F$} is given by a tripel
 $(\Phi, \phi,\lambda),$ with
 \begin{enumerate}
  \item [(i)] $\Phi : K\{x,t\} \to K\{x,t\}$ a morphism, satisfying\\
  $\Phi(x_i) =: \Phi_i \in K\{x,t\}$, $\Phi_i(x,0) = x_i$, $i=1,...,n$,\\
  $\Phi(t_j) = t_j, \ j=1,...,q$,
 \item [(ii)] $\phi : K\{s\} \to K\{t\}$, a morphism, $\phi(s_j) =: \phi_j(t) \in K\{t \}, \ j=1,...,p$,   and   
\item [(iii)] $\lambda \in  K\{y,t\}$, $y=(y_1)$, $\lambda(y,0) = y$,   such that 
 $$\Phi(G)(x,t)= G(\Phi(x,t),t)  = \lambda(F(x,\phi(t)),$$
with $\Phi(x,t):=(\Phi_1(x,t),...,\Phi_n(x,t))$ and $\phi(t):=(\phi_1(t),...,\phi_p(t))$.
   \end{enumerate}
 If this holds, we say that  $G$ is {\em right-left induced}  from $F$.  If $\lambda$ is a translation, that is $\lambda(y,t) = y + \alpha(t),  \alpha \in K\{t\}, \alpha(0)=0$, $G$ is called {\em right induced}  from $F$.
\item  An unfolding $F$ of $f$ is called {\em  right-left complete}  (resp. {\em right complete})\,\footnote{In the literature also "versal"
is used instead of "complete". We prefer to use versal only in the sense of Definition \ref{def.def}.}  if any unfolding of $f$ is induced from $F$ by a suitable right-left (resp. right) morphism. \\
$F$ is called {\em right-left versal} (resp. {\em right versal}) if in addition the lifting property from Definition \ref{def.def} (5) holds for right-left (resp. right) induced unfoldings.
A versal unfolding is called {\em semiuniversal} or {\em miniversal} if the cotangent map of the base change map is uniquely determined (see Definition \ref{def.def} (5) below). 
 \end{enumerate}
 \end{Definition} 

\begin{Remark}\label{rm.unf}{\em
Since the Jacobian matrix of $\Phi$ at 0 is the identity in Definition \ref{def.unf}, $\Phi$ is in fact an automorphism of $K\{x,t\}$. Hence, if $G$ is right-left induced from $F$, then $G(x,t)$ is right-left equivalent to 
$H(x,t)$, where $H(x,t) = F(x,\phi(t))$ is obtained from $F$ by the base change $\phi$.

Usually the notion right equivalence is used if the translation is trivial, that is, $\alpha=0$, but the case  of a non-trivial translation is essential in the proof of Theorem \ref{thm.split}.
The  translation $\alpha$  is  introduced to take care of the constant terms of $ F(x,{\phi(t)})$ for varying $t$. 

The following theorem is known for real and complex analytic germs with proofs that  use the existence and uniqueness of the solution of an ODE. This cannot be extended to positive characteristic. Our proof uses Grauert's Approximation Theorem \ref{grauertapprox} and is rather simple.}
 \end{Remark}

\begin{Theorem}\label{thm.suunf}
Let $f\in \fm^2 \subset K\{x\}$ and assume that $\dim_K\fm/\langle\frac{\partial f}{\partial x_1},\ldots,\frac{\partial f}{\partial x_n}\rangle<\infty$.
Let $g_1,\ldots,g_p\in \fm$ be representatives of a generating system (resp. a basis) of $\fm/\langle\frac{\partial f}{\partial x_1},\ldots,\frac{\partial f}{\partial x_n}\rangle$ 
and $s=(s_1,\ldots,s_p)$ new variables.
Then 
$$F(x,s) := f(x)+\sum_{i=1}^pg_i(x)s_i\in K\{x,s\}$$ is a right-versal (resp. right-semiuniversal) unfolding  of $f$.
\end{Theorem}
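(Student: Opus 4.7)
The strategy is to apply Grauert's Approximation Theorem \ref{grauertapprox} to the equation expressing that an arbitrary unfolding of $f$ is right-induced from $F$. Given an unfolding $G \in K\{x,t\}$ of $f$, $t = (t_1,\ldots,t_q)$, we seek $\Phi(x,t) = x + Z(x,t)$, $\phi(t) \in K\{t\}^p$, and a translation $\alpha(t) \in K\{t\}$, all vanishing at $t=0$, such that
$$G(\Phi(x,t), t) = F(x, \phi(t)) + \alpha(t).$$
Introducing indeterminates $Y = (Y_1,\ldots,Y_p)$, $W$, and $Z = (Z_1,\ldots,Z_n)$ corresponding to $\phi, \alpha, \Phi - x$ respectively (so $Y, W$ depend only on $t$ and $Z$ on $(x,t)$), this becomes the system
$$H(x, t, Y, W, Z) := G(x + Z, t) - F(x, Y) - W = 0,$$
which fits the framework of Theorem \ref{grauertapprox} with $I = 0$, the theorem's $s$ replaced by our $t$, and its vector $Y$ replaced by our pair $(Y, W)$ of length $p+1$.

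First, $(Y, W, Z) = (0, 0, 0)$ is a solution of order $e_0 = 0$ since $H(x, t, 0, 0, 0) = G(x,t) - f(x) \in \langle t\rangle K\{x,t\}$. The crucial step is to verify the extension property: any solution $(Y^{(e)}, W^{(e)}, Z^{(e)})$ of order $e \geq e_0$ extends to one of order $e+1$ by adding homogeneous terms of degree $e+1$ in $t$. Writing the degree-$(e+1)$ part in $t$ of $H(x, t, Y^{(e)}, W^{(e)}, Z^{(e)})$ as $\sum_{|\beta|=e+1} r_\beta(x)\, t^\beta$ with $r_\beta \in K\{x\}$, a first-order Taylor expansion in $(Y, W, Z)$ reduces the extension problem, for each $\beta$, to producing a decomposition
$$r_\beta(x) = w_\beta + \sum_{j=1}^p u_{j,\beta}\, g_j(x) - \sum_{i=1}^n v_{i,\beta}(x)\, \frac{\partial f}{\partial x_i}(x),$$
with $w_\beta, u_{j,\beta} \in K$ and $v_{i,\beta} \in K\{x\}$. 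Such a decomposition always exists: $w_\beta := r_\beta(0)$ absorbs the constant term, and the hypothesis that $g_1, \ldots, g_p$ generate $\fm/\langle \partial f/\partial x_i\rangle_{K\{x\}}$ as a $K$-vector space provides scalars $u_{j,\beta} \in K$ such that $r_\beta(x) - r_\beta(0) - \sum_j u_{j,\beta}\, g_j(x)$ lies in $\langle \partial f/\partial x_i\rangle_{K\{x\}}$, which then determines the $v_{i,\beta}$.

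With both hypotheses of Theorem \ref{grauertapprox} met, it produces convergent $\phi \in K\{t\}^p$, $\alpha \in K\{t\}$, and $Z \in K\{x,t\}^n$, all vanishing at $t=0$. Setting $\Phi(x,t) := x + Z(x,t)$, the condition $\Phi(x,0) = x$ forces the Jacobian of $\Phi$ at the origin to be the identity, so $\Phi$ is an automorphism of $K\{x,t\}$ fixing each $t_j$; hence $G$ is right-induced from $F$, which gives right-completeness. The additional lifting property required for right-versality follows by rerunning the same construction relative to a given induction on a sub-parameter space, which amounts to starting the Grauert induction at a higher $e_0$ with the prescribed data. When the $g_j$ form a \emph{basis} of $\fm/\langle \partial f/\partial x_i\rangle$, differentiating the equation with respect to $t_j$ and evaluating at $t = 0$ yields
$$\frac{\partial G}{\partial t_j}(x, 0) \equiv \sum_k \frac{\partial \phi_k}{\partial t_j}(0)\, g_k(x) + \frac{\partial \alpha}{\partial t_j}(0) \mod \bigl\langle \partial f/\partial x_i\bigr\rangle_{K\{x\}},$$
so the cotangent map $d\phi|_{t=0}$ is uniquely determined by $G$, giving semiuniversality. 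The main obstacle is the verification of the extension step, which depends precisely on the generating hypothesis for the $g_j$; every other step is formal once the setup is arranged to match the $Y/Z$-nesting pattern of Theorem \ref{grauertapprox}.
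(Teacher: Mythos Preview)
Your proposal is correct and follows essentially the same approach as the paper: set up the induction equation $G(\Phi(x,t),t)=F(x,\phi(t))+\alpha(t)$, verify the order-$0$ solution, use the generating hypothesis on $g_1,\dots,g_p$ (together with the constant $1$ to absorb $r_\beta(0)$) and a first-order Taylor expansion to extend any order-$e$ solution to order $e+1$, then invoke Theorem~\ref{grauertapprox}. The only differences are cosmetic---you package the unknowns as $(Y,W,Z)$ to match the $Y/Z$-nesting of Theorem~\ref{grauertapprox} explicitly, and you spell out the cotangent computation for semiuniversality, which the paper omits.
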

%%%%%%%%%%%%%%%%%%%%%%%%%%%%%%%%%

 \begin{proof}
We only prove completeness; versality can be proven in a similar way, but is more complicated in terms of notation.

 Let $G(x,t)\in K\{x,t\} $ be an arbitrary unfolding of $f$, $t=(t_1,\ldots,t_q)$.
 We have to prove that there exists $\phi(t)\in \langle t\rangle K\{t\}$, an automorphism  $\Phi$ of $K\{x,t\}$, $\Phi = (\Phi_1,...,\Phi_n)$, $\Phi_i = \Phi(x_i)$,
with $\Phi_i(x,0) = x_i$,
$\Phi(t_j) = t_j$, and $\alpha \in K\{t\}$ with  $\alpha(0)=0$,
such that
\begin{equation}\label{def1}
\Phi(G)=G(\Phi(x,t),t) = F(x,\phi(t)) +\alpha(t).
\end{equation}
A solution of order $e$ (in $t$) of (\ref{def1}) is a triple $(\Phi,\phi,\alpha) \in K\{x\}[t]^n \times K[t]^p  \times K[t]$  such that 
(\ref{def1}) holds mod $\langle t \rangle^{e+1}= \langle t \rangle^{e+1}K\{x,t\}$.

Since $\Phi(x,0) =x$, $\phi(0)= 0$, $\alpha(0)=0$  and $G(x,0)=f(x)=F(x,0)$ we obviously have a solution of (\ref{def1}) of order $0$. 
To apply Theorem \ref{grauertapprox}, we have to show that every solution 
$(\Phi,\phi,\alpha)$ of $(\ref{def1})$ of order $e$ in $t$ can be extended to a solution
$(\Phi',\phi',\alpha')$ of $(\ref{def1})$ of order $e+1$. 
\\
Having the solution $(\Phi,\phi,\alpha)$ of order $e$, the difference
$G(\Phi(x,t),t) - F(x,\phi(t)) - \alpha(t)$ is a power series of order $\ge e+1$ in $t$. The homogeneous part of degree $e+1$ in $t$ of
$G(\Phi(x,t),t) - F(x,\phi(t)) -\alpha(t)$ can thus be written as
\[
 G(\Phi(x,t),t) - F(x,\phi(t)) -\alpha(t) = \sum_{| \nu | = e+1}t^{\nu}h_{\nu}(x) \mod \langle t\rangle^{e+2}
\]
with $h_{\nu} \in K\{x\}$. 
Define $g_0:=1$. By assumption $g_0,...,g_p$ generate $K\{x\}/\langle\frac{\partial f}{\partial x_1},\ldots,\frac{\partial f}{\partial x_n}\rangle$ and therefore we can write for all $\nu$ 
\[
h_{\nu} =  \sum_{j=0}^{p}b_{j \nu}g_j + \sum_{j=1}^nh_{\nu j}
\frac{\partial f}{\partial x_j}
\]
with $b_{j \nu } \in K$, $j \ge 1$,  and $h_{\nu j} \in K\{ x\}$.
We define 
\begin{align*}
\Phi'_j &= \Phi_j - \sum_{|\nu | = e+1}{}{h_{\nu j}}t^\nu \; \quad
j=1,\ldots,n\\
\phi'_j &= \phi_j + \sum_{|\nu | = e+1}b_{j \nu}t^\nu\;\quad j = 1, \ldots, p,\\
\alpha' &= \alpha +\sum_{|\nu | = e+1}b_{0 \nu}t^\nu.
\end{align*}
As $G(x,0)= f$ and $\Phi(x,0)=x$ we obtain 
$$\frac{\partial{(G(\Phi(x,t),t))}}{\partial{x_j}}\equiv \frac{\partial f}{\partial x_j} \mod \langle t\rangle .$$
This implies using Taylor's formula (Remark \ref{rm.unf}) 
\begin{align*}
G(\Phi'(x,t),t)&=  G(\Phi(x,t),t)-\sum_{j=1}^n \frac{\partial G}{\partial x_j}(\Phi(x,t),t)(\sum_{|\nu | = e+1}{}{h_{\nu j}}t^\nu) \mod \langle t\rangle^{e+2}\\
&=  G(\Phi(x,t),t)-\sum_{|\nu | = e+1}(\sum_{j=1}^nh_{\nu j}\frac{\partial f}{\partial x_j} )t^\nu \mod \langle t\rangle^{e+2}.
\end{align*}
On the other hand, by Taylor and the definition of $F$,
\begin{align*}
F(x,\phi'(t))& = F(x,\phi(t)) +\sum_{j=1}^p(\frac{\partial F}{\partial s_j}(x,\phi(t))(\sum_{|\nu | = e+1}b_{j \nu}t^\nu) \mod \langle t\rangle^{e+2}\\
&=F(x,\phi(t))+\sum_{|\nu | = e+1}(\sum_{j=1}^p b_{j\nu}g_j)t^\nu \mod \langle t\rangle^{e+2}.
\end{align*}
We obtain
$$G(\Phi'(x),t)-F(x,\phi'(t))-\alpha'(t)\equiv 0 \mod \langle t\rangle^{e+2}.$$
Now we can apply Theorem \ref{grauertapprox} to obtain an analytic solution of (\ref{def1}).
This proves the theorem.
\end{proof}

\begin{Remark}\label{rm.unf}
{\em
(1) We can of course define unfoldings $G$ of $f$ over arbitrary parameter spaces $K\{t\}/J$, $J\subset K\{t\}$ an ideal, as  an element
$G(x,t)\in (K\{t\}/J)\{x\} $ with $G(x,0)=f$.\\
 Then the power series $F$ from Theorem \ref{thm.suunf} is also right-versal for such unfoldings $G$:
Namely, let $G(t,x)\in (K\{t\}/J)\{x\} $ be an unfolding of $f$, $t=(t_1,\ldots,t_q)$. 
 We have to prove that there exists $\phi(t)\in \langle t\rangle$,
an automorphism  $\Phi$ of $(K\{t\}/J)\{x\}$  with $\Phi(x)_{t=0} = x$ and
$\Phi(t) = t$ and
an $\alpha \in K\{t\}$ with  $\alpha(0)=0$, satisfying
\begin{equation}\label{def2}
 \Phi(G)= G(\Phi(x,t),t)  = F(x,\phi(t)) +\alpha(t).
\end{equation}
To see this, we take a representative of $G(t,x)$ in $K\{t,x\}$ which  is an unfolding of $f$ over $K\{t\}$. We apply  Theorem (\ref{thm.suunf}) to this representative and pass then to  $(K\{t\}/J)\{x\} $.

(2) At several places we use (the replacement of) the Taylor series in positive characteristic in the following form:\\
Let $f(x)= \sum\limits_{\left| {{\alpha}}  \right| \ge ord(f)} {c_\alpha  x^{{\alpha}}}\in K\{x\}$, $x=(x_1,...,x_s)$ and ${z}=(z_1,...,z_s)$ new variables. Then  
	\begin{align*}
      f({x+z}) =& \sum_{|\alpha|\ge \order(f)}c_\alpha(x+z)^\alpha
	=f({ x})+
	\sum\limits_{\nu = 1}^s {\frac{{\partial f({x})}}{{\partial x_\nu }}\cdot z_\nu }  + H,\\
H=&	\sum\limits_{\left| \alpha  \right| \ge \order(f)} {c_\alpha\cdot  
\Big ( 
{\sum\limits_{\left| \gamma  \right| \ge 2 \atop \gamma  \le \alpha } 
{\binom {\alpha _1 } { \gamma _1} \cdot\ldots\cdot
\binom {\alpha _s } { \gamma _s}{x}^{\alpha  - \gamma }{z}^\gamma} }
\Big )}.
\end{align*}
If $z=z(x)$ then $\order (H(x)) \ge  \ord(f) + |\gamma|  \order ({z(x)}-1) \ge \ord(f) + 2  \order ({z(x)}-1).$
$\gamma\le \alpha$ means that $\gamma_\nu\le\alpha_\nu$ for all $\nu$, and for $k\in \Z$
we have $k{ x}^{\alpha  - \gamma }{ z}^\gamma= 0$ if   $p\mid k$ and  $k {(\modular\hskip 3pt p)}{ x}^{\alpha  - \gamma }{ z}^\gamma$ if  $p\nmid k$. 
}
\end{Remark}

For the proof of the splitting lemma we need also a determinacy bound for an isolated singularity $f\in K\{x\}$.
Recall that $f$ is {\em right equivalent} to $g\in K\{x\}$ ($f \rsim g$)
if there exists an automorphism $\varphi$ of $K\{x\}$ such that 
$\varphi(f) = g$.  $f$ is {\em contact equivalent} to 
$g$ ($f \csim g)$
if there exists in addition a unit $u\in K\{x\}$ such that $u\varphi(f) = g$. For a non-negative integer $k$ we call $f$ {\em right $k$-determined} (resp. {\em contact k-determined}) if $f \rsim g$ (resp. $f \csim g$) for each $g$ with $f-g \in \fm^{k+1}$.

\begin{Proposition}\label{prop.rdet1}
Let $f\in \fm^2 \subset K\{x\}$ and assume that $\fm^{k+1}\subset \fm \langle\frac{\partial f}{\partial x_1},\ldots,\frac{\partial f}{\partial x_n}\rangle^2$ then $f$ is right $k$-determined.
\end{Proposition}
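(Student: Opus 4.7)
The plan is to reduce right $k$-determinacy to solving the analytic equation $f(x+\xi) = f(x) + h(x)$ for $\xi = (\xi_1,\ldots,\xi_n) \in \fm^2 K\{x\}^n$, where $g = f + h$ and $h \in \fm^{k+1}$. Indeed, $\Phi: x_i \mapsto x_i + \xi_i$ then defines an analytic automorphism of $K\{x\}$ (its Jacobian at the origin is the identity because $\xi_i \in \fm^2$), realising $f \rsim g$. The strategy is to build a formal solution $\bar\xi \in \fm^2 K[[x]]^n$ by a Newton-type iteration and then invoke the Analytic Artin Approximation Theorem \ref{thm.approx} with approximation order $c = 3$ to extract a convergent $\xi \in K\{x\}^n$ with $\xi \equiv \bar\xi \pmod{\fm^3}$, which automatically lies in $\fm^2 K\{x\}^n$.

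The formal solution is constructed iteratively: set $\bar\xi^{(0)} := 0$ and $\bar\xi^{(e+1)} := \bar\xi^{(e)} + \bar\eta^{(e)}$. Writing $J := \langle \partial f/\partial x_1,\ldots,\partial f/\partial x_n\rangle$ and $\tilde f_i := (\partial f/\partial x_i)(x + \bar\xi^{(e)})$, a short Nakayama argument (using that $\bar\xi^{(e)}$ is maintained in $\fm \cdot J$ along the iteration) shows $\langle\tilde f_1,\ldots,\tilde f_n\rangle = J$ at every stage, so the hypothesis $\fm^{k+1} \subset \fm \cdot J^2$ remains applicable after each substitution. Choosing $\bar\eta^{(e)}$ to solve $\sum_i \bar\eta^{(e)}_i \tilde f_i = r^{(e)}$ for the current error $r^{(e)} := f + h - f(x + \bar\xi^{(e)})$, and applying Taylor's formula to first order (which is the only order available in arbitrary characteristic), yields a new error $r^{(e+1)} = -R^{(e)}$ with $R^{(e)}$ in the ideal generated by the products $\bar\eta^{(e)}_i\,\bar\eta^{(e)}_j$.

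The main obstacle is the convergence of this iteration in positive characteristic: the naive implication $\bar\eta^{(e)} \in \fm^a \Rightarrow R^{(e)} \in \fm^{2a}$ need not improve on the starting $\fm$-order when $k$ is large. The remedy is to track the error simultaneously in the $\fm$-adic and the $J$-adic filtrations. By induction on $e$ I would prove that $r^{(e)} \in \fm^{2^e} \cdot J^2$: the base case is exactly the hypothesis applied to $r^{(0)} = h$, and the inductive step writes $r^{(e)} = \sum_{i,j} c_{ij}\,\tilde f_i\,\tilde f_j$ with $c_{ij} \in \fm^{2^e}$, which gives $\bar\eta^{(e)}_i = \sum_j c_{ij}\,\tilde f_j \in \fm^{2^e} \cdot J$ and hence $R^{(e)} \in \fm^{2^{e+1}} \cdot J^2$. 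Thus the factor $J^2$ is preserved while the $\fm$-exponent doubles --- exactly the role played by the squared Jacobian in the hypothesis, which compensates for the missing higher-order terms of Taylor's expansion.

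Consequently $\bar\eta^{(e)} \in \fm^{2^e + 1} K[[x]]^n$ (using $J \subset \fm$), so $\bar\xi := \sum_{e \ge 0}\bar\eta^{(e)}$ converges $\fm$-adically to an element of $\fm^2 K[[x]]^n$ satisfying $f(x+\bar\xi) = f + h$. Theorem \ref{thm.approx}, applied to the analytic system $F(x,Y) := f(x+Y) - f(x) - h(x) = 0$ in the unknowns $Y = (Y_1,\ldots,Y_n)$, then produces the sought convergent solution $\xi$, and the induced $\Phi$ is the required analytic automorphism realising $f \rsim g$.
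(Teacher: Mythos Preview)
Your formal Newton iteration is correct and is precisely the algebraic heart of what the paper invokes as Newton's Lemma. The device of tracking the error simultaneously in the $\fm$-adic and $J$-adic filtrations --- proving $r^{(e)} \in \fm^{2^e} J^2$ so that the $J^2$ factor is preserved while the $\fm$-exponent doubles --- is exactly the mechanism that makes the iteration work in arbitrary characteristic, and your Nakayama argument for $\langle\tilde f_i\rangle = J$ is sound.

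The gap is in your last step. You appeal to the Analytic Artin Approximation Theorem~\ref{thm.approx} to pass from the formal solution to an analytic one, but that theorem carries the standing hypothesis that $K$ be quasi-complete when $\mathrm{char}(K)>0$, and by Schemmel's result cited in Remark~\ref{rm.nestapprox}(i) this hypothesis cannot be removed. Proposition~\ref{prop.rdet1}, however, is asserted for an \emph{arbitrary} real valued field. So as written your argument proves strictly less than what is claimed.

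The paper's route via Newton's Lemma \cite[Theorem~9.1.3]{dJP00} avoids this: rather than producing a formal solution and then approximating, Newton's Lemma shows \emph{directly} that the iterates converge in some $B_\rho$, using norm estimates of the type developed in Section~\ref{sec:2} (cf.\ Remark~\ref{normNF}). Concretely, for suitably small $\rho$ one bounds $\|\bar\eta^{(e)}\|_\rho$ in terms of $\|r^{(e)}\|_\rho$ (via the division theorem) and then $\|r^{(e+1)}\|_\rho \le C\,\|\bar\eta^{(e)}\|_\rho^2$ (from the quadratic Taylor remainder), yielding quadratic convergence in norm. Your iteration already contains all the algebraic content; replacing the appeal to Theorem~\ref{thm.approx} by these analytic estimates would close the gap and recover the full strength of the proposition.
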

\begin{proof}
The proof is a direct consequence of Newtons Lemma (see \cite[Theorem 9.1.3]{dJP00}).
\end{proof}

\begin{Proposition}\label{prop.rdet2}
Let $f\in \fm^2 \subset K\{x\}$.\\
1. If $\fm^{k+2}\subset \fm^2 \langle\frac{\partial f}{\partial x_1},\ldots,\frac{\partial f}{\partial x_n}\rangle$ then $f$ is right
 $(2k-\ord(f)+2)$-determined. \\
2. If $\fm^{k+2}\subset \fm \langle f  \rangle  + \fm^2 \langle\frac{\partial f}{\partial x_1},\ldots,\frac{\partial f}{\partial x_n}\rangle$ then $f$ is contact $(2k-\ord(f)+2)$-determined.
\end{Proposition}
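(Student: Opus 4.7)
Both parts are proved by a Newton-type iteration, analogous to the proof of Proposition~\ref{prop.rdet1} (Newton's Lemma), but with a refined analysis of the Taylor remainder exploiting the differential order $d(f)$ (resp.\ the order $\ord(f)$). The iteration yields a formal equivalence, which is then promoted to a convergent one via Analytic Artin Approximation (Theorem~\ref{thm.approx}).

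For Part~1, write $h := g - f \in \fm^{N+1}$ with $N := 2k - d(f) + 1$. Construct inductively automorphisms $\psi_m \in K[[x]]^n$ with $h_m := \psi_m(g) - f \in \fm^{M_m + 1}$, starting from $\psi_0 = \id$, $h_0 = h$, $M_0 = N$. The hypothesis $\fm^{k+2} \subset \fm^2 \langle\frac{\partial f}{\partial x_1},\dots,\frac{\partial f}{\partial x_n}\rangle$ forces $k \ge d(f)$ (by comparing orders in $\fm^2 \cdot \langle \partial_i f\rangle \subset \fm^{d(f)+2}$), so $M_m \ge k+1$ is preserved by the iteration. The inclusion $\fm^{M_m + 1} \subset \fm^{M_m - k - 1} \cdot \fm^{k+2}$ together with Grauert's Division Theorem~\ref{thm.div} (to extract sharp degree estimates) gives a decomposition
\[ h_m = \sum_{i=1}^n \xi^{(m)}_i \,\tfrac{\partial f}{\partial x_i}, \qquad \xi^{(m)}_i \in \fm^{M_m - k + 1} \subset \fm^2. \]
Set $\chi_m(x) := x - \xi^{(m)}$ and $\psi_{m+1} := \chi_m \circ \psi_m$. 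The first-order Taylor expansion of $f(x - \xi^{(m)})$ and of $h_m(x - \xi^{(m)})$ (valid in any characteristic), together with Hasse--Schmidt bounds on the second-order remainders (using $f \in \fm^{\ord(f)}$ to force $D_\alpha f \in \fm^{\ord(f)-|\alpha|}$) and the principal cancellation $\sum_i \xi^{(m)}_i \tfrac{\partial f}{\partial x_i} = h_m$, yield a quadratic recursion of the form $M_{m+1} \ge 2 M_m - 2k + d(f)$. Setting $a_m := M_m - (2k - d(f))$, this becomes $a_{m+1} \ge 2 a_m$ with $a_0 = 1$, so $M_m \to \infty$ and the $\fm$-adic limit $\hat\varphi := \lim_m \psi_m \in K[[x]]^n$ is a formal automorphism satisfying $\hat\varphi(g) = f$. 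Finally, applying Theorem~\ref{thm.approx} to the system $g(\Phi(x)) - f(x) = 0$ in the unknowns $\Phi = (\Phi_1,\dots,\Phi_n)$, with $\hat\varphi$ as the formal solution and approximating modulo $\fm^2$ (which preserves invertibility of the Jacobian at $0$), yields the desired convergent automorphism $\varphi \in K\{x\}^n$ with $\varphi(g) = f$.

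Part~2 proceeds by the same iteration with the enlarged hypothesis $\fm^{k+2} \subset \fm\langle f\rangle + \fm^2 \langle\frac{\partial f}{\partial x_1},\dots,\frac{\partial f}{\partial x_n}\rangle$ providing a decomposition $h_m = a_m f + \sum_i \xi^{(m)}_i \tfrac{\partial f}{\partial x_i}$ with $a_m \in \fm^{M_m - k}$, $\xi^{(m)}_i \in \fm^{M_m - k + 1}$; we correct by both $\chi_m$ and the unit factor $(1 - a_m)$, updating $u_{m+1} := (1 - a_m) u_m$ and $\psi_{m+1} := \chi_m \circ \psi_m$. Since $f \in \fm^{\ord(f)}$ now plays the role of a \emph{zeroth} Hasse--Schmidt derivative of order $\ord(f)$, it supplies a saving of $\ord(f) - 1$ in place of the saving $d(f)$ coming from the partial derivatives, leading to the bound $2k - \ord(f) + 2$. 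The main technical obstacle in both parts is the sharp degree bookkeeping in the quadratic recursion, in particular the Hasse--Schmidt bound on the second-order Taylor remainder in positive characteristic (replacing the classical $\partial^\alpha f / \alpha!$ expansion, whose factorials may vanish modulo $p$), and the sharp order bound on the $\xi^{(m)}_i$ obtained from Grauert's Division Theorem, which together guarantee that the iteration is in the quadratic-convergence regime already at $M_0 = N$.
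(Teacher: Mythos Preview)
Your route differs from the paper's and has a genuine gap. The paper does not construct a formal solution and then invoke Artin approximation; instead it applies Grauert's Approximation Theorem~\ref{grauertapprox} directly to $F(x,y):=f(x)-g(y)=0$, showing that the identity is a solution of order $e_0=2k-d(f)+1$ and that every order-$e$ solution (for $e\ge e_0$) extends to one of order $e+1$ via a single linear correction $\varphi^{(e+1)}_i=\varphi^{(e)}_i-b_i$ with $b_i\in\fm^{e-k+1}$. The Taylor remainder estimate the paper uses, namely $H\in\fm^{d(f)-1}\langle b_1,\dots,b_n\rangle^2\subset\fm^{e+2}$, is essentially the one underlying your recursion, but convergence is obtained from Theorem~\ref{grauertapprox}, not Theorem~\ref{thm.approx}.

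This distinction is not cosmetic: Theorem~\ref{thm.approx} requires $K$ to be quasi-complete when $\mathrm{char}(K)>0$, whereas Proposition~\ref{prop.rdet2} is stated for an arbitrary real valued field. Your argument therefore establishes only a weaker statement. Grauert's theorem carries no such hypothesis, which is precisely why the paper invokes it here (and indeed why the paper develops it in the first place). Separately, there is an internal inconsistency in your order bookkeeping: the Hasse--Schmidt bound you actually cite, $D_\alpha f\in\fm^{\ord(f)-|\alpha|}$, feeds into the remainder to give only $M_{m+1}\ge 2M_m-2k+\ord(f)-1$, hence determinacy degree $2k-\ord(f)+2$ (the bound from \cite{BGM12}) rather than $2k-d(f)+1$. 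Since $d(f)\ge\ord(f)-1$ with strict inequality possible when $p\mid\ord(f)$, the recursion $M_{m+1}\ge 2M_m-2k+d(f)$ you state does not follow from the bound you invoke; you would need $D_\alpha f\in\fm^{d(f)-1}$ for $|\alpha|=2$, which is what the paper asserts for its remainder $H$ but which is not a consequence of $f\in\fm^{\ord(f)}$ alone.
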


\begin{proof} The proof was first given in  \cite[Theorem 3]{BGM12} for formal power series. We give here a (shorter) proof for $K\{x\}$ using of Grauert's approximation theorem. We prove only statement 1., the proof of statement 2. is similar.\\
Let $N\geq 2k-\ord(f)+2=:e_0$ and assume that $f-g\in \fm^{N+1}$. We have to prove that there exists an automorphism $\varphi=(\varphi_1,\ldots,\varphi_n)$
of $K\{x\}$ such that $f(\varphi)=g$. We consider the equation 
$$F(x,y):=f(y)-g(x)=0, \ y=(y_1,\ldots,y_n),$$
and want to apply Theorem \ref{grauertapprox} to the equation $F=0$. Since $f-g\in \fm^{e_0+1}$,
this equation has the solution $\varphi^{(e_0)}(x_i)=x_i$ of order  $e_0$:
$$F(x,\varphi^{(e_0)})=0 \mod \fm^{e_0+1}.$$
Let $\varphi^{(e)}$ be a solution of $F(x,y)=0$ of order $e \ge e_0$ such that $\varphi^{(e)}\equiv \varphi^{(e_0)} \mod \fm^{e_0+1}$, that is , $\varphi^{(e)}(x_i) = x_i + h_i$ with $h_i \in \fm^{e_0+1}$.
By assumption we have 
$\fm^{k+2}\subset \fm^2j(f)$ and
$F(x,\varphi^{(e)})\subset \fm^{e+1}$.
Since $\fm^{e+1} = \fm^{e-k-1}\fm^{k+2} \subset \fm^{e-k+1}j(f)$,
we can write
$$F(x,\varphi^{(e)})=\sum_{i=1}^nb_i(x)\frac{\partial f}{\partial x_i}(x)$$
with $b_i\in \fm^{e-k+1}$. Now define
$\varphi^{(e+1)}$ by $\varphi^{(e+1)}_i=\varphi^{(e)}_i-b_i$. We obtain by Taylor's formula (see Remark \ref{rm.unf})
\begin{align*}
F(x,\varphi^{(e+1)})=F(x,\varphi^{(e)}-b)=F(x,\varphi^{(e)})-\sum_{i=1}^nb_i(x)\frac{\partial F}{\partial y_i}(x,\varphi^{(e)})+H(x)
\end{align*}
with $\ord(H) \ge \ord(f) + 2( \ord(b) -1) \ge \ord(f) + 2( e-k)
\ge e + (e_0-2k +\ord(f)) = e+2.$\\
Moreover
$\frac{\partial F}{\partial y_i}(x,\varphi^{(e)}) = \frac{\partial f}{\partial x_i}(\varphi^{(e)}) =   \frac{\partial f}{\partial x_i}(x+h) = \frac{\partial f}{\partial x_i}(x) +
 \sum_{j=1}^n h_j \frac{\partial^2 f}{\partial x_j\partial x_i}(x) + G(x)$ and we can estimate $\ord(b_i(\sum_{j=1}^n h_j \frac{\partial^2 f}{\partial x_j\partial x_i}(x) + G(x)) \ge e+2.$
 
This implies $F(x,\varphi^{(e+1)}) =0  \mod \fm^{e+1}$
and thus $\varphi^{(e+1)}$ is an extension of the solution $\varphi^{(e)}$ of order $e+1$. We can apply
Grauert's Approximation Theorem \ref{grauertapprox} and obtain a solution $\varphi$ of $F=0$. This proves statement 1.
\end{proof}

We formulate now the general splitting lemma for power series in $K\{x\}$, which was proved in \cite[Section 2 and 3]{GP25} under additional assumptions. Note that for $f \in \fm^2$ the rank of the Hessian matrix $H(f):=\Big( \frac{\partial^2 f}{\partial x_i\partial x_j}(0)\Big)_{i,j=1,\ldots,n}$ is invariant under right equivalence. 

We would like to mention that the proof for the existence of the splitting in characteristic 2 uses a new idea for {\em non-isolated singularities}.

\begin{Theorem}[Splitting lemma in any characteristic]
\label{thm.split}
Let $K$ be a real valued field and $f \in \fm^2 \subset K\{x\}$.
\begin{enumerate}
\item Let $char(K) \ne2$ and   $\rank H(f)=k$. Then 
 $$ f\ \rsim \ a_1x_1^2+\ldots+a_kx_k^2+g(x_{k+1},\dots,x_n) $$
   with $a_i \in K$,  $a_i \ne 0$, and
   \mbox{$g\in\fm^3$}. 
   $g$ is called the {\em residual
     part\/}\index{residual part} of $f$, it is uniquely determined up to   right equivalence in $K\{x_{k+1},...,x_n\}$.   
\item Let char$(K)=2$ and $\rank H(f)=2l$\ \footnote{In characteristic 2 the rank of the Hessian matrix is even.}. Then $f$ is right equivalent to 
$$ \sum_{i \text{ odd, } i=1}^{2l-1}(a_i x_i^2 + x_ix_{i+1} +
a_{i+1} x_{i+1}^2) + \sum_{i=2l+1}^{n}d_i x_i^2 + h(x_{2l+1},...,x_n),
$$
with  $a_i, d_i \in K$, $h\in \langle x_{2l+1},\ldots,x_{n} \rangle^3$. 
$g:=  \sum_{i=2l+1}^{n}d_i x_i^2  + h(x_{2l+1},\ldots,x_{n})$ is called the {\em residual part} of $f$, it is uniquely determined up to right equivalence in $K\{x_{2l+1},\ldots,x_{n}\}$.
\end{enumerate} 
\end{Theorem}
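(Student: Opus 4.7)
The plan is to use Grauert's approximation theorem through the results of Section \ref{sec5}, reducing the splitting to two successive applications of already-established facts. The formal case is handled in \cite[Sec.~2,~3]{GP25}; the new task is to produce a convergent change of variables over an arbitrary real valued field.

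First, a linear change of variables over $K$ brings $H(f)$ into normal form: $\mathrm{diag}(2a_1,\dots,2a_k,0,\dots,0)$ in case (1), or hyperbolic $2\times 2$ blocks on the non-degenerate part together with a diagonal $\sum d_i x_i^2$ contribution on the complement in case (2). Write $u=(x_1,\dots,x_k)$ in case (1), $u=(x_1,\dots,x_{2l})$ in case (2), let $s$ denote the remaining variables, and let $Q(u)$ denote the resulting quadratic normal form. Next, I would reduce to the special case $f(u,0)=Q(u)$: in both characteristics the partials $\partial Q/\partial u_i$ generate the maximal ideal of $K\{u\}$, so $d(Q)=1$ and Proposition \ref{prop.rdet2}(1) shows that $Q$ is right $2$-determined in $K\{u\}$. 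Since $f(u,0)-Q(u)\in\langle u\rangle^3$, there exists a convergent automorphism $\psi$ of $K\{u\}$ with $f(\psi(u),0)=Q(u)$; extending $\psi$ by the identity on $s$ and pulling $f$ back, we may assume $f(u,0)=Q(u)$.

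Now I would view $f\in K\{u,s\}$ as an unfolding of $Q$ over the parameter space $K\{s\}$. Because $\langle u\rangle/\langle \partial Q/\partial u_i\rangle=0$, Theorem \ref{thm.suunf} shows that the $0$-parameter constant unfolding $F=Q$ is right-versal. Right-versality applied to the unfolding $f$ yields an automorphism $\Phi$ of $K\{x\}$ with $\Phi(s_j)=s_j$ and $\Phi_i|_{s=0}=u_i$, together with a translation $\alpha\in\langle s\rangle K\{s\}$, such that
\[
f(\Phi(u,s),s)=Q(u)+\alpha(s),
\]
which is the desired right-equivalence with $g:=\alpha$. The precise form of the residual---$g\in\fm^3$ in case (1), and $g=\sum d_is_i^2+h$ with $h\in\fm^3$ in case (2)---follows from comparing Hessians at the origin: since $\Phi_i|_{s=0}=u_i$ and $\Phi(s_j)=s_j$, the Jacobian $J_\Phi$ at $0$ has block form $\bigl(\begin{smallmatrix}I & D \\ 0 & I\end{smallmatrix}\bigr)$, and $J_\Phi^{T}H(f)J_\Phi=H(Q)+H(\alpha)$ then forces $D=0$ and $H(\alpha)=0$, which in each characteristic gives the asserted shape of $g$. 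Uniqueness of $g$ up to right equivalence in $K\{s\}$ follows the classical argument and is already established in \cite{GP25}.

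The main obstacle is that the splitting is intrinsically nested: the residual $g$ depends only on $s$, while $\Phi$ depends on all variables. Since nested analytic Artin approximation fails in general \cite{Ga71}, convergence of $\Phi$ and $g$ must be secured indirectly, via Proposition \ref{prop.rdet2} and Theorem \ref{thm.suunf}, both of which ultimately rest on Grauert's approximation theorem (Theorem \ref{grauertapprox}). In characteristic $2$, additional linear-algebraic care is required in Step 1 to produce the paired normal form, for which we rely on the classification of symmetric bilinear forms developed in \cite{GP25}.
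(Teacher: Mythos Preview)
Your proposal is correct and follows essentially the same route as the paper: reduce the $2$-jet to the quadratic normal form $Q$, use Proposition~\ref{prop.rdet2} to conclude that $Q$ is right $2$-determined and hence achieve $f(u,0)=Q(u)$, then view $f$ as an unfolding of $Q$ and invoke Theorem~\ref{thm.suunf} (with $p=0$) to obtain $f(\Phi(u,s),s)=Q(u)+\alpha(s)$. The paper defers case~(1) to \cite{GP25} and only spells out case~(2), whereas you treat both uniformly and add an explicit $2$-jet/Hessian comparison to pin down the shape of the residual; this is a minor presentational difference, not a different method.
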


If the field $K$ is algebraically closed, the coefficients $a_i$ and $d_i$ can be made to $1$. More precisely, we have

\begin{Corollary} \label{cor.split}
Let $f$ be as in Theorem \ref{thm.split}.
\begin{enumerate}
\item Let $char(K)\ne 2$ and assume that $K$ coincides with its subfield $K^2$ of squares. Then, with $k=\rank H(f),$ 
  $$ f\ \rsim x_1^2+\ldots+x_k^2+g(x_{k+1},\dots,x_n). $$
$g$  uniquely determined up to  right equivalence in $K\{x_{2l+1},\ldots,x_{n}\}$. 
\item  Let $char(K) = 2$ and assume that quadratic equations are solvable in $K$. Then  $f$ is right equivalent to one of the following normal forms, with $g$ unique up to right equivalence in $K\{x_{2l+1},\ldots,x_{n}\}$, $2l=\rank H(f)$:
$$
\begin{array}{llll}
(a) & x_1x_2+x_3x_4+\ldots+x_{2l-1}x_{2l}+x_{2l+1}^2 &+g(x_{2l+1},\ldots,x_{n}), & 1\le  2l+1 \le n,\\
(b)& x_1x_2+x_3x_4+\ldots+x_{2l-1}x_{2l} &+ g(x_{2l+1},\ldots,x_{n}),  & 2 \le 2l \le n.
\end{array}
$$
\end{enumerate}
\end{Corollary}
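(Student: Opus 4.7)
The plan is to derive Corollary \ref{cor.split} directly from the normal forms in Theorem \ref{thm.split} by performing further linear changes of coordinates made possible by the extra hypotheses on $K$. For part (1), the theorem gives $f\rsim a_1x_1^2+\cdots+a_kx_k^2+g$ with $a_i\in K\setminus\{0\}$; by hypothesis each $a_i=b_i^2$ for some $b_i\in K$, and the diagonal automorphism of $K\{x\}$ sending $x_i\mapsto b_i^{-1}x_i$ for $i\le k$ and fixing the remaining variables turns $a_ix_i^2$ into $x_i^2$ while leaving $g$ unchanged. Uniqueness of $g$ is inherited verbatim from Theorem \ref{thm.split}(1), since this final change of variables is the identity on $K\{x_{k+1},\ldots,x_n\}$.

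For part (2), the central step is to reduce each binary block $q=ax^2+xy+by^2$ (with $x=x_i$, $y=x_{i+1}$) to $xy$ via a linear automorphism that touches only those two variables, thus not disturbing the other blocks or the residual. If $a\ne 0$, I will first solve the quadratic $a\alpha^2+\alpha+b=0$ (possible by hypothesis) and substitute $x\mapsto x+\alpha y$ to kill the $y^2$-coefficient, leaving $ax^2+xy$, then substitute $y\mapsto y+ax$, which in $\characteristic(K)=2$ cancels the remaining $x^2$ term because $2a=0$. If $a=0$ and $b\ne 0$, the single substitution $x\mapsto x+by$ suffices; if $a=b=0$ nothing is needed. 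Applied to all $l$ blocks, this turns the first $2l$ variables into $\sum_{i\text{ odd}}x_ix_{i+1}$ and leaves $\sum_{i\ge 2l+1}d_ix_i^2+h$ intact.

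To finish, note that the hypothesis "quadratic equations solvable" includes $X^2-c=0$, so $K=K^2$; hence in characteristic $2$ we have $\sum_{i\ge 2l+1}d_ix_i^2=\xi^2$ with $\xi:=\sum_{i\ge 2l+1}\sqrt{d_i}\,x_i$. If $\xi=0$, we are already in case (b) with $g:=h$. Otherwise $\xi$ is a nonzero linear form in $x_{2l+1},\ldots,x_n$ and an invertible linear change of just these variables sends $\xi$ to $x_{2l+1}$; the resulting form is $x_1x_2+\cdots+x_{2l-1}x_{2l}+x_{2l+1}^2+g$ with the new $g$ being the transformed $h$, still in $\langle x_{2l+1},\ldots,x_n\rangle^3$. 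This is case (a). Uniqueness of $g$ up to right equivalence in $K\{x_{2l+1},\ldots,x_n\}$ transfers from the uniqueness of the residual part in Theorem \ref{thm.split}(2), since every additional transformation introduced above is a right equivalence supported on the residual variables (or on the block variables only).

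The main obstacle is the characteristic-$2$ reduction of the binary blocks: it is the only place where the full strength of "quadratic equations solvable" is used (via an Artin--Schreier-type equation in $\alpha$), and the two-step substitution must be checked to genuinely compose to a linear automorphism of $K\{x_i,x_{i+1}\}$ whose effect is exactly the claimed cancellation in char $2$, independently of the other blocks and of the residual part.
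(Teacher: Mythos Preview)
The paper states Corollary~\ref{cor.split} without proof, treating it as an immediate consequence of Theorem~\ref{thm.split} under the extra hypotheses on $K$; your argument supplies precisely the elementary details one expects. The diagonal rescaling in part~(1), the two-step linear reduction of each binary block $ax_i^2+x_ix_{i+1}+bx_{i+1}^2$ to $x_ix_{i+1}$ in part~(2) (using an Artin--Schreier-type root for the $y^2$-coefficient and then $y\mapsto y+ax$ to kill the $x^2$-term via $2a=0$), and the Frobenius identity $\sum_{i>2l} d_ix_i^2=\xi^2$ followed by a linear change sending $\xi$ to $x_{2l+1}$ are all correct and only touch the intended variables.

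One small caveat on uniqueness in case~(a): what Theorem~\ref{thm.split}(2) yields directly is that the \emph{full} residual part, hence after your normalization $x_{2l+1}^2+g$, is unique up to right equivalence in $K\{x_{2l+1},\ldots,x_n\}$. Concluding uniqueness of $g$ alone (with $g\in\fm^3$) is a slightly stronger statement than what you have literally justified. Since the paper itself gives no argument here and is equally terse about what $g$ is required to satisfy, this looseness is shared with the original.
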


\begin{proof}[Proof of Theorem \ref{thm.split}] 
1. The splitting lemma for char$(K) \ne2$ was proved in \cite[Theorem 2.1]{GP25} for formal power series  and for $f\in K\{x\}$ under the assumption that $f$ has an isolated singularity. Moreover, the statement was proved for  $f\in \C\{x\}$ or  $f\in \R\{x\}$  without assuming that $f$ has an isolated singularity (loc. cit. Theorem 2.4). The proof uses as an intermediate step the existence of a convergent semiuniversal unfolding (with non-trivial translation), a result basically due to Mather.
Since we proved the existence of a convergent semiuniversal unfolding in Theorem \ref{thm.suunf} for arbitrary real valued $K$, the proof given in \cite[Theorem 2.4]{GP25} works as well in our case and thus proves the splitting lemma in characteristic $\ne 2$.

2. The uniqueness of the residual part in characteristic  2 was already 
proved in \cite[Remark 3.6]{GP25}.  The existence of a splitting was proved in 
loc. cit. (Theorem 3.5) for formal and algebraic power singularities over any field. It remains to prove the existence for convergent power series over real valued fields. \\
By \cite[Theorem 3.1]{GP25}  (applied to the 2-jet of $f$) we can assume that
  $ f(x)= q + g$, where
$$ q(x_1,...,x_{2l}) = \sum_{i \text{ odd, } i=1}^{2l-1}(a_i x_i^2 +  x_ix_{i+1}+ a_{i+1} x_{i+1}^2)$$
and $g(x_1,...,x_n)=\sum_{i=2l+1}^{n}d_i x_i^2 +h(x_1,...,x_n)$,
with $a_i, d_i \in K$ and $h\in \fm^3 \subset K\{x\}$.\\
Setting $g'(x_1,...,x_{2l}):= g(x_1,...,x_{2l},0,...,0)$ we get
 $g(x_{1},\dots,x_n)=g'(x_1,...,x_{2l}) + \sum_{i=2l+1}^{n}d_i x_i^2 +h'(x_1,...,x_n)$, with 
 $h'= \sum_{i=2l+1}^n x_ih_i(x_1,...,x_n)$. Then $f = q+g = 
 q+ g'+ \sum_{i=2l+1}^{n}d_i x_i^2 + h'=f'+  \sum_{i=2l+1}^{n}d_i x_i^2+ h',$ with
  $$f'(x_1,...,x_{2l}):=  f(x_1,...,x_{2l},0,...,0) =q(x_1,...,x_{2l}) + g'(x_1,...,x_{2l}).$$
 Since $d(q)=1$ we get from Proposition \ref{prop.rdet2} that $q$ is 2-determined.
Since $g' \in \fm^3$, it follows that $f'$
 is right equivalent to $q$ by an automorphism $\varphi$ of 
 $K\{ x_1,...,x_{2l}\}$. Setting $\varphi_i(x_1,...,x_{2l}) := \varphi(x_i)$,
we have
 $$\varphi(f') = q(\varphi_1,...,\varphi_{2l})+g'(\varphi_1,...,\varphi_{2l}) = q(x_1,...,x_{2l}).$$
 Now define the automorphism  $\psi$ of $K\{ x_1,...,x_n\}$ by $x_i\mapsto \varphi_i(x_1,...,x_{2l})$ for
  $i=1,\dots,{2l}$, and \mbox{$x_i\mapsto x_i$} for $i>{2l}$. Then 
  $\psi(f') = \varphi(f') =q$ and
 we have
  $$\psi(f) = 
  q+\sum_{i=2l+1}^{n}d_i x_i^2 +\sum_{i={2l}+1}^n x_ih_i(\varphi_1,...,\varphi_{2l},x_{{2l}+1},...,x_n). $$
 Thus, after applying $\psi$, we may assume that $f= q +\sum_{i=2l+1}^{n}d_i x_i^2+ h'$,
 $h'= \sum_{i={2l}+1}^n x_ih_i(x_1,...,x_n).$
  That is $f(x_1,...,x_{2l},0,...,0) = q$, in other words, $f$ is an unfolding of $q$.\\
  Since $\langle \frac{\partial q}{\partial x_1},..., \frac{\partial q}{\partial x_{2l}}\rangle =\langle x_1,...,x_{2l}\rangle$, $q$ is a semiuniversal unfolding of itself by Theorem \ref{thm.suunf}, and 
the unfolding $f$ can be right induced from $q$.
That is,  there exists an automorphism  $\Phi =(\Phi_1,...,\Phi_n)$ of $K\{x\}$  with $\Phi_i=\Phi(x_i)$,
 $\Phi_i(x_1,...,x_{2l},0) = x_i$, $i=1,...,2l$,
  $\Phi(x_j) = x_j$, $j=2l+1,...,n$  
  and $\alpha \in K\{t\}$ with  $\alpha(0)=0$,
such that 
$$\Phi(f)=f(\Phi_1(x),...,\Phi_{2l}(x),x_{2l+1},...,x_n) = 
q(x_1,...,x_{2l}) +\alpha(x_{2l+1},...,x_n).$$ 
Now $\Phi(f)= \Phi(q)+\Phi(\sum_{i=2l+1}^{n}d_i x_i^2)+\Phi(h')
=q  +\sum_{i=2l+1}^{n}d_i x_i^2 + h$, where $h:=\Phi(h') \subset K\{x_{2l+1},...,x_n\}$. This proves statement 2.
\end{proof}

\section{Semiuniversal Deformation}\label{sec6}
We are now going to prove the existence of a convergent semiuniversal deformation for an analytic $K$-algebra\,\footnote{An {\em analytic $K$-algebra}, or just {\em analytic algebra},  is a $K$-algebra isomorphic to  $K\{x_1,...,x_n\}/I$ 
for some $n\ge 0$ and some ideal $I$. For properties of analytic algebras see \cite[Sections 1.1.1 and 1.1.2]{GLS25}} with isolated singularity, $K$ a real valued field, by applying Theorem \ref{grauertapprox}. Before we formulate it, we recall the definition of a (semiuni-) versal deformation (see \cite{GLS25} for a geometric formulation for complex germs and \cite{St03} for general cofibred groupoids.)

\begin{Definition} \label{def.def}Let $R=K\{x\}/I$ and $T=K\{t\}/J$ be two analytic $K$-algebras, $x=(x_1,...,x_n)$, $t=(t_1,...,t_q)$ with $I,J$ ideals and $K$ a real valued field. 
\begin{enumerate}
   \item  
   A {\em deformation of $R$ over $T$} is a flat morphism $\phi: T \to \kr$ of analytic $K$-algebras together with an isomorphism 
$\kr \otimes_T T/\langle t \rangle \to R$. The algebras $\kr$ (resp. $T$, resp. $\kr/\langle t \rangle \kr \cong R$) are called the (algebras of the) {\em total space} (resp. {\em base space}, resp. {\em special fiber}) of the deformation. We denote a deformation of $R$ by 
$(\phi,\iota): T\to \kr \to R$, with $\iota :\kr \to R$ the canonical projection, or just by $\phi: T\to \kr$. 
   \item 
   Let $(\phi',\iota'): T'\to \kr' \to R$ be a second deformation of $R$ over the analytic $K$-algebra $T'$. A {\em morphism} from $(\phi',\iota')$ to $(\phi,\iota)$ 
is given by a pair of morphisms $(\psi: \kr \to \kr', \varphi : T\to T')$
such that $\iota = \iota' \circ \psi$ and $\psi \circ \varphi = \phi' \circ \varphi$. Two deformations over the {\em same} base space $T$ are {\em isomorphic}  if there exists a morphism  $(\psi,\varphi)$ with $\psi$ an isomorphism and $\varphi = \id_T$.  
\item 
Let $(\phi,\iota): T\to \kr \to R$ be a deformation of $R$, 
$\varphi : T\to S$ a morphism of analytic algebras, and let the lower square of the commutative diagram
$$
\begin{xymatrix}{
&R&\\
 \kr \hat\otimes_T S \ar[ur]^{\varphi^*\iota}  && \kr \ar[ll]_{\tilde\varphi} \ar[ul]_\iota \\
S \ar[u]^{\varphi^*\phi} && T \ar[ll]^\varphi \ar[u]_\phi  }
\end{xymatrix}
$$
be the analytic pushout\,\footnote{\,The analytic pushout can be constructed as the analytic tensor product $\kr \hat\otimes S$ (see \cite[Kapitel III, § 5]{GR71})
modulo the ideal generated by $\{(\phi(t)r,s)-(r,\varphi(t)s), t\in T, r \in \kr, s \in S\}$. It is the algebraic counterpart to the geometric pullback or fiber product.} of $\phi$ and $\varphi$. 
Then the morphism $\varphi^*\phi$ is flat (see e.g. \cite[Proposition 1.1.87]{GLS25}) and
there is a natural map 
$\varphi^*\iota:  \kr \hat\otimes_T S \to R$  such that 
$$(\varphi^*\phi, \varphi^*\iota) : S \to \kr \hat\otimes_T S \to R$$
is a deformation of $R$ over $S$. 
$(\varphi^*\phi, \varphi^*\iota)$ is called the 
{\em deformation induced from $(\phi, \iota)$ by $\varphi$} and 
$\varphi$ is called the {\em base change map}.
$(\tilde \varphi,  \varphi)$ is a 
morphism of deformations from $(\varphi^*\phi, \varphi^*\iota)$ to
$(\phi, \iota)$. 
  \item    
  A deformation $(\phi,\iota): T\to \kr \to R$ of $R$ over $T$ is called {\em complete} if any deformation $(\psi,j): S\to \kq\to R$ of $R$ over some analytic algebra $S$ can be induced from $(\phi,\iota)$ by some base change map $T \to S$   (up to isomorphism of deformations of $R$ over $S$).
\item
 The deformation $(\phi,\iota)$ of $R$ over $T$ is called {\em versal} if it is complete and the following lifting property holds:\\
Let  $(\psi,j)$ be a given deformation of $R$ over $S$. Let $k: S\to S'$ a surjection and $\varphi': T\to S'$ a morphism of analytic algebras, such that 
the induced deformations  $(\varphi'^* \phi, \varphi'^* \iota)$ and 
$(k^*\psi,k^*j)$ over $S'$ are isomorphic. Then there exists a morphism $\varphi: T \to S$ such that $k\circ \varphi = \varphi'$ 
and $(\psi,j) \cong (\varphi^* \phi, \varphi^* \iota)$.
\item 
A versal deformation is called {\em semiuniversal} or {\em miniversal} if, with the notations from 5.,  the cotangent map of
$\varphi$, $\dot\varphi: \fm_T/\fm_T^2 \to \fm_S/\fm_S^2$, is uniquely determined by $(\phi,\iota)$ and $(\psi,j)$. 
\end{enumerate}
\end{Definition}
Versality of $(\phi,\iota)$ means that $(\phi,\iota)$ is not only complete but in addition that any deformation $(\psi,j)$ can be induced from $(\phi,\iota)$ by a base change that  extends a given base change  inducing  $(k^*\psi,k^*j)$ from 
 $(\phi,\iota)$. $(\phi,\iota)$ is called {\it formally versal} if the lifting property 5. holds for Artinian analytic algebras $S$ and $S'$.\\
 Property 5. is needed to construct a versal deformation (completeness is not sufficient): starting from the trivial deformation $T=K \to \kr =R$
 one extends this to bigger and bigger Artinian base spaces. This can be done since the so called 'Schlessinger conditions'  (a kind of formal version of the lifting property 5.) hold for the deformation functor. In the limit one gets finally to a formal object, which is formally versal. \\
 To get an analytic versal deformation Grauert formulates the lifting property 5.
as the solution of analytic equations. Constructing the liftings such that the assumptions of Grauert's Approximation Theorem hold, leads then to an analytic versal deformation.
\bigskip

Before we prove the existence of a semiuniversal deformation of an  analytic algebra $R= K\{x\} /I$ with an isolated singularity, we need some more definitions. 

Recall that  $R$ is {\em regular} iff 
$\dim(R) = \dim_K \fm/\fm^2$, where $\dim(R)$ is the Krull dimension of $R$ and $\fm= \fm_R$ the maximal ideal of $R$.
Let $I$ be generated by $f_1,...,f_m$ and let $Jac(I)$ be
the Jacobian matrix $(\frac{\partial f_j}{\partial x_i})$. We denote by by $I_k(Jac(I))$ the ideal generated by the $k\times k$-minors of $Jac(I)$ (which is independent of the chosen generators $f_j$).
By the Jacobian criterion $R$ is regular iff $I_d(Jac(I)) = K\{x\}$ with $d=\dim(R)$, 
and this is equivalent to $R\cong K\{y_1,..,y_d\}$   by the implicit function theorem. 
Moreover, if $r$ is the minimal cardinality of a system of generators of $I$, then $R$ is regular iff $Jac(I)_{x=0}$ has rank $n-r$, and then $d=n-r$.
We define the {\em singular locus} of $R$ (or of $I$) as a subset of $\Spec R$ by
$$\Sing(R) := \{P \in \Spec R \mid R_P \text{ is not regular} \}.$$ 
We say that  $R$ (or $I$) has an {\em isolated singularity} (at 0) if the maximal ideal  $\fm$ is an isolated point of $\Sing(R)$ or if $\fm$ is a non-singular point.

\begin{Remark}{\em \label{rm.sing} The following description of $\Sing(R)$  follows easily from the Jacobian criterion. It shows in particular that $\Sing (R)$ is a closed subset of $\Spec R$.

(1) If  $R$ is pure $d$-dimensional (i.e. $\dim R/P =d$ for all minimal primes $P\in \Spec R$) then
$$\Sing (R) = V(I+ I_d(Jac(I))$$
with $V(J) = \{P\in \Spec R\mid P\supset J\}$.

(2) If $R$ is not pure dimensional we consider the minimal primes $P_1,\dots,P_r$ of $R$. Then $R/P_i$ is pure dimensional and we define the singular locus of $R$ as
\begin{align*}
\Sing (R) = \bigcup\nolimits_{i=1}^r \Sing (R/P_i) \cup  \bigcup\nolimits_{i\neq j}V(P_i)\cap V(P_j),
\end{align*}
The points in $\Spec R \setminus \Sing (R)$ are called {\em non-singular} or {\em smooth} points of $R$.}
\end{Remark}

Grauert's approximation theorem is by far the most difficult part of Grauert's original proof for the existence of a semiuniversal deformation of an arbitrary complex analytic germ with an isolated singularity. The same holds for analytic algebras over real valued fields in the following theorem.

\begin{Theorem}[Semiuniversal deformation over a real valued field]
\label{thm.sudef}
Assume that the analytic $K$-algebra $R=K\{x\} /I$, $K$ a real valued field, has an isolated singularity.  Then $R$ has a semiuniversal deformation.
\end{Theorem}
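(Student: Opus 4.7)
The plan is to follow Grauert's original strategy: build a formal semiuniversal deformation by the standard Schlessinger infinitesimal machinery, recognise it as a formal solution of a nested system of analytic equations, and pass to a convergent solution via Theorem~\ref{grauertapprox}. Write $I=\langle f_1,\ldots,f_m\rangle$ and fix generators $r_1,\ldots,r_\ell\in K\{x\}^m$ of the module of syzygies, so that $\sum_i r_{ji}f_i=0$. A deformation of $R$ over $T=K\{s\}/\langle h_1,\ldots,h_d\rangle$ (with $s=(s_1,\ldots,s_p)$) can be encoded, via the standard flatness-by-lifting-syzygies criterion (cf.\ \cite{GLS25}), as tuples $(F_i,R_{ji},c_{jk},h_k)$ satisfying
\[
F_i(x,0)=f_i,\qquad R_j(x,0)=r_j,\qquad \sum_{i} R_{ji}F_i=\sum_{k} h_k c_{jk},
\]
with $h_k\in\langle s\rangle K\{s\}$. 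Setting $Y=(h_k)$ and $Z=(F_i,R_{ji},c_{jk})$ puts this into precisely the nested form required by Theorem~\ref{grauertapprox}: the $h_k$, which specify the base, depend only on $s$.

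Because $R$ has an isolated singularity, the cotangent modules $T^1_R$ and $T^2_R$ are finite-dimensional $K$-vector spaces. Choose $g_1,\ldots,g_p\in K\{x\}^m$ (together with liftings of the corresponding syzygy relations) whose classes form a $K$-basis of $T^1_R$; this furnishes a first-order solution $(F_i^{(1)},R_{ji}^{(1)},h_k^{(1)}=0,c_{jk}^{(1)}=0)$ with $F_i^{(1)}-f_i=\sum_\nu s_\nu g_{i\nu}$. By the classical Schlessinger formal-deformation procedure --- at each order $e+1$ the obstruction to extending a solution of order $e$ lies in a finite-dimensional quotient of $T^2_R$ and can be killed using the freedom to add fresh degree $e+1$ generators to the $h_k$ --- one inductively produces $\hat F_i,\hat R_{ji},\hat h_k,\hat c_{jk}$ solving the system to all orders over a formal base $K[[s]]/\hat J$. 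In particular, a formal solution of every order exists extending the first-order data.

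Now apply Theorem~\ref{grauertapprox} with $e_0=1$ and the above first-order solution. The essential step is to verify its extension hypothesis: \emph{every} solution of order $e\ge 1$ extending the first-order data admits a homogeneous degree $e+1$ correction. Given such a solution, the degree $e+1$ part of the flatness defect $\sum_i R_{ji}F_i-\sum_k h_k c_{jk}$ defines, modulo the natural Koszul coboundaries coming from the $R$- and $F$-corrections, an element of $T^2_R\otimes_K K[s]_{e+1}$. Because $\{g_1,\ldots,g_p\}$ spans $T^1_R$ and because adding new degree $e+1$ terms to the $h_k$ modifies the defect by a controlled $T^1_R$-coboundary, the same obstruction-killing argument as in the formal hull produces the required corrections $u_k^{(e)}$ to $h_k$ (depending only on $s$) and $v_i^{(e)}, w_{ji}^{(e)}, \tilde c_{jk}^{(e)}$ to the remaining unknowns. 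Theorem~\ref{grauertapprox} then delivers convergent $F_i,R_{ji},h_k,c_{jk}$. Setting $T:=K\{s\}/\langle h_1,\ldots,h_d\rangle$ and $\kr:=\bigl(K\{x,s\}/\langle F_1,\ldots,F_m\rangle\bigr)\otimes_{K\{s\}}T$ yields a convergent deformation of $R$, with flatness following from the lifting of the $r_j$ to $R_j$.

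Semiuniversality is obtained by an analogous application of Theorem~\ref{grauertapprox}: given any convergent deformation $(\psi,j)$ of $R$ over an analytic algebra $S$, the existence of a base change $\varphi\colon T\to S$ together with an isomorphism of induced deformations is encoded by a further nested analytic system; formal solvability is guaranteed by formal semiuniversality of the hull, and the extension hypothesis is again supplied by the infinitesimal obstruction calculus, so Theorem~\ref{grauertapprox} produces a convergent base change. Uniqueness of the induced cotangent map $\fm_T/\fm_T^2\to\fm_S/\fm_S^2$ then follows from the fact that the $g_\nu$ form a \emph{basis} of $T^1_R$. \textbf{The main obstacle} is verifying the extension hypothesis of Theorem~\ref{grauertapprox} for an \emph{arbitrary} solution of order $e$ (not merely the inductively constructed one) and with the base-correction $u_k^{(e)}$ depending only on $s$; this requires a uniform $T^2_R$-obstruction calculus valid along every formal lift, and is exactly the point at which the nested form of Grauert's approximation theorem is indispensable --- the ordinary analytic Artin approximation would not respect the nesting $h_k\in K\{s\}$.
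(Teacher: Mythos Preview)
Your proposal is correct and follows the same route as the paper, which for this theorem gives no details and simply defers to Stevens \cite[Section 9]{St03}, observing that the purely algebraic reduction to Theorem~\ref{grauertapprox} works verbatim over any real valued field. One cosmetic point: the input to Theorem~\ref{grauertapprox} is the \emph{extension property} (every order-$e$ solution extends to order $e+1$), not a pre-existing formal hull --- the paper explicitly stresses that Grauert's method never invokes a formal solution --- and the number $d$ of base-ideal generators $h_k$ must be fixed in advance (bounded via $\dim_K T^2_R$, which is finite for an isolated singularity) before the theorem is applied, so your phrase ``add fresh degree $e+1$ generators to the $h_k$'' should be read as adding degree $e+1$ \emph{terms} to the fixed $h_1,\ldots,h_d$, not enlarging $d$.
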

Instead of requiring that $R$ has an isolated singularity, it is sufficient to require that $\dim_K T^1_R < \infty$, where $T^1_R$ denotes the $K$-vector space of isomorphism classes of infinitesimal deformations of $R$, that is, deformations  over $T_\eps = K + \eps K$, $\eps^2 =0$. The support $T^1_R$ is contained in $\Sing(R)$, implying that $\dim_K T^1_R < \infty$ if $R$ has an isolated singularity. We refer to \cite [Section 2.1.4]{GLS25} or \cite[Section 3]{St03} for a discussion of $T^1_R$.
A different approach to proving the existence of a semiuniversal deformation of an isolated singularity using Banach-analytic techniques was given in \cite{Ha85}.

We mention that previously to Grauert, Schlessinger had proved the existence of a (in a certain sense formally) semiuniversal deformation of a formal analytic singularity $R=K[[x]] /I$, $K$ any field and $T^1_R$ finite dimensional, as well as Tjurina for a normal isolated singularity $R=\C\{x\} /I$. Grauert proved his theorem after a joint Oberseminar with Brieskorn in G\"ottingen in 1971 (the first author was participating as a student) in order to understand the results of Schlessinger and Tjurina. Finally, Grauert did not use their results, but developed his division and approximation theorem from scratch. In particular, Grauert did not use the existence of a formal solution (as one might expect in view of the standard applications of Artin's approximation theorems), but constructed a convergent approximation theorem in one go with estimates in every step (as we did  in Theorem \ref{grauertapprox}). 

\begin{proof}
The original proof of Grauert has been  presented in slightly different forms  by several authors. We refer to the book of Stevens \cite{St03} for a particular clear presentation and for further references. 
The step to derive Theorem \ref{thm.sudef} from Theorem  \ref{grauertapprox} is purely algebraic and much easier than the proof of the approximation theorem. It works in the same manner for $K\{x\}$ as for $\C\{x\}$, see \cite[Section 9]{St03}, so that we do not reproduce it here but refer to \cite{St03}. 
This proves  the existence of a convergent semiuniversal deformation as claimed in Theorem \ref{thm.sudef}.
\end{proof}

For general isolated singularities one cannot say anything about the semiuniversal deformation beyond its existence. For complete intersections however, in particular for hypersurfaces,  an explicit description can be given. This is well known for complex analytic and formal singularities. We are going to prove this for analytic complete intersections over arbitrary real valued fields $K$.
The analytic algebra $R=K\{x\}/I$, $x=(x_1,...,x_n)$, is called a {\it complete intersection}, if a minimal set of generators $f_1,...,f_k \in K\{x\}$ of $I$  satisfies  $k = n-\dim R$. We say also that $I$ is a complete intersection. We call the analytic algebra $R$ an ICIS if $R$ is a complete intersection with isolated singularity. We note that our proof, which uses Grauert's approximation theorem over $K$, is much shorter than the original proof over $\C$ given in \cite{KaS}.

\begin{Theorem} \label{thm.suICIS}
Let $R=K\{x\}/I$ be an ICIS, $K$ a real valued field, and $I$  minimally generated by $f_1,...,f_k \in K\{x\}$. Let $g_1,\dots,g_{\tau}\in R^k$,
$g_i=(g_i^1,\dots,g_i^k)$,  represent a basis
(respectively a system of generators) for the
finite dimensional $K$-vector space
$$ T^1_R  = K\{x\}^k\big/\textstyle\Bigl\langle\big(\frac{\partial f_i}{\partial x_j}\big)\cdot K\{x\}^n+\langle
f_1,\dots,f_k\rangle K\{x\}^k\Big\rangle $$
with $\big(\frac{\partial f_i}{\partial x_j}\big) : K\{x\}^n \to K\{x\}^k$  the Jacobian matrix of $(f_1,...,f_k)$.
With $t_1,\dots,t_\tau$  new variables
let $t=(t_1,\dots,t_\tau)$ and set
\begin{eqnarray*}
 F_1 & := & f_1+\sum_{j=1}^{\tau} t_jg_j^1,\\
[-0.6em]
\vdots & & \qquad \vdots\\
F_k  & := & f_k +\sum_{j=1}^{\tau} t_jg_j^k,\\
\kr & := & K\{x,t\} / \langle F_1, ... , F_k \rangle.
\end{eqnarray*}
Then $T=K\{t\} \to \kr \to R=\kr / \langle t_1,\dots,t_\tau \rangle$,
with $K\{t\} \to \kr$ induced by the inclusion $K\{t\} \to K\{x,t\}$ and 
$\kr \to R$ the canonical projection,
is a semiuniversal (respectively versal) deformation of
$R$.
\end{Theorem}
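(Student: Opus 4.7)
The plan is to mirror the strategy of Theorem \ref{thm.suunf}: verify flatness of the candidate family directly from the complete intersection structure, reformulate the versality property as a system of analytic equations in a base change, and then apply Grauert's Approximation Theorem \ref{grauertapprox} after an inductive extension argument. Flatness is the easy part: since $f_1,\ldots,f_k$ is minimal and $R$ is a complete intersection, $f_1,\ldots,f_k$ form a regular sequence in $K\{x\}$; because $F_i\equiv f_i\bmod \langle t\rangle$, a standard lifting argument shows that $F_1,\ldots,F_k$ form a regular sequence in $K\{x,t\}$, hence $\kr=K\{x,t\}/\langle F_1,\ldots,F_k\rangle$ is a relative complete intersection over $T=K\{t\}$ and therefore flat over $T$. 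Also $\kr/\langle t\rangle\kr \cong R$ by construction.

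To prove versality, let $(\psi,j):S\to\kq\to R$ be an arbitrary deformation, where $S=K\{s\}/J$. Because $\kq\to R$ is a flat deformation of a complete intersection, by lifting relations one can represent $\kq=K\{x,s\}/(\langle G_1,\ldots,G_k\rangle+J\cdot K\{x,s\})$ with $G_i\in K\{x,s\}$, $G_i(x,0)=f_i$. Inducing $\kr$ via a base change $\phi: K\{t\}\to K\{s\}/J$ with $\phi(t_j)\in\langle s\rangle/J$ and identifying the pushout with $\kq$ amounts to finding $\phi$, an automorphism $\Phi$ of $K\{x,s\}$ over $K\{s\}$ with $\Phi(x_i)\equiv x_i\bmod\langle s\rangle$, and a matrix $U=(u_{ij})\in GL_k(K\{x,s\})$ with $U(x,0)=\mathbf{1}_k$, such that
\begin{equation}\label{eq.vers}
F_i\bigl(\Phi(x,s),\phi(s)\bigr)\;\equiv\;\sum_{j=1}^k u_{ij}(x,s)\,G_j(x,s)\pmod{J\cdot K\{x,s\}},\quad i=1,\ldots,k.
\end{equation}
A \emph{solution of order $e$} is a triple $(\Phi,\phi,U)$ of polynomials in $s$ satisfying \eqref{eq.vers} modulo $\langle s\rangle^{e+1}+J$. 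The trivial choice $\Phi=\id$, $\phi=0$, $U=\mathbf{1}_k$ gives a solution of order $0$.

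The key inductive step — and the main obstacle — is to show that every solution of order $e$ extends to a solution of order $e+1$. Given an order-$e$ solution, the degree-$(e+1)$ part in $s$ of the difference of the two sides of \eqref{eq.vers} is a vector $(h_1,\ldots,h_k)$ of homogeneous polynomials of degree $e+1$ in $s$ with coefficients in $K\{x\}$, which represents an obstruction class in $T^1_R\otimes_K K[s]_{e+1}$ modulo $J$. By the very definition of the $g_j$ as generators of $T^1_R$, one can write
\[
h_i=\sum_{\nu,j}b_{j,\nu}\,g_j^i\,s^\nu+\sum_{\nu,l}a_{l,i,\nu}\,\frac{\partial f_i}{\partial x_l}s^\nu+\sum_{\nu,l}c_{l,i,\nu}\,f_l\,s^\nu \pmod{J},
\]
with $b_{j,\nu}\in K$, $a_{l,i,\nu}, c_{l,i,\nu}\in K\{x\}$. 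Then the updates $\phi_j\mapsto\phi_j+\sum_\nu b_{j,\nu}s^\nu$, $\Phi_l\mapsto\Phi_l-\sum_{\nu,i}a_{l,i,\nu}s^\nu$, $u_{il}\mapsto u_{il}+\sum_\nu c_{l,i,\nu}s^\nu$ give, by a first-order Taylor computation as in the proof of Theorem \ref{thm.suunf}, an extension to order $e+1$. Collecting the new unknowns into one vector and the equation \eqref{eq.vers} into one system $\mathcal{F}\equiv 0\bmod J$, we are precisely in the setup of Theorem \ref{grauertapprox} (using Remark \ref{rm.nestapprox}(iv)), which yields an analytic solution $(\Phi,\phi,U)$ of \eqref{eq.vers}, hence an isomorphism of deformations $\kq\cong\varphi^*\kr$ over $S$.

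This proves completeness; the full lifting property of Definition \ref{def.def}(5) is proved by the same argument, solving \eqref{eq.vers} with a prescribed value of $\phi$ modulo a surjection $k:S\to S'$ (this is where nestedness in Grauert's theorem is used, with the variables of $S'$ treated as parameters). Finally, in the basis case, $g_1,\ldots,g_\tau$ induce a $K$-linear isomorphism $\fm_T/\fm_T^2 \xrightarrow{\sim} T^1_R$; the cotangent map $\dot\phi:\fm_T/\fm_T^2\to\fm_S/\fm_S^2$ of any inducing base change $\phi$ is then uniquely determined by the condition that its composition with the Kodaira--Spencer map of $\kq$ equals the Kodaira--Spencer map of $\kr$, yielding semiuniversality.
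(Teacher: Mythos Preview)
Your approach is essentially the paper's: cast versality as an analytic equation in a triple $(\Phi,\phi,U)$, lift every order-$e$ solution to order $e+1$ using the presentation of $T^1_R$, and invoke Theorem~\ref{grauertapprox}. The paper writes the equation in the equivalent form $G(\Phi(x,s),s)=M\,F(x,\phi(s))$ and first reduces to the case $J=0$ rather than working modulo $J$ via Remark~\ref{rm.nestapprox}(iv), but these are cosmetic differences; the inductive Taylor step and the appeal to Grauert's approximation are the same.

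One slip in your inductive step should be corrected: in the decomposition of $h_i$, the coefficients of the Jacobian part cannot depend on $i$. The Jacobian submodule $\big(\tfrac{\partial f_i}{\partial x_j}\big)\cdot K\{x\}^n\subset K\{x\}^k$ consists of vectors whose $i$-th component is $\sum_l a_l\,\tfrac{\partial f_i}{\partial x_l}$ with $a_l\in K\{x\}$ \emph{independent of the row index} $i$; this is exactly what allows a single automorphism $\Phi$ to kill that part simultaneously in all $k$ equations. The correct expression is therefore
\[
h_i=\sum_{j,\nu}b_{j,\nu}\,g_j^i\,s^\nu+\sum_{l,\nu}a_{l,\nu}\,\frac{\partial f_i}{\partial x_l}\,s^\nu+\sum_{l,\nu}c_{l,i,\nu}\,f_l\,s^\nu,
\]
and the update reads $\Phi_l\mapsto\Phi_l-\sum_\nu a_{l,\nu}s^\nu$ (no sum over $i$). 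With this fix (and the obvious sign adjustment in $\Delta\phi_j$ coming from your reversed form of the equation), the first-order Taylor cancellation goes through exactly as in the paper.
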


Note that $\big(\frac{\partial f_i}{\partial x_j}\big)\cdot R^n$ is the submodule of $R^k$ generated by the columns of $\big(\frac{\partial f_i}{\partial x_j}\big)$ and that
$\tau = \dim_K T^1_R,$
the {\it Tjurina number} of $R$, is finite since $R$ is an ICIS.

\begin{Corollary} \label{cor.ICIS}
Let $R=K\{x\}/\langle f \rangle$ be an isolated hypersurface singularity and $g_1,\dots,g_{\tau}\in K\{x\}$ representatives of
a $K$-basis (respectively a system of generators) of the
 Tjurina algebra
$$ T^1_R= K\{x\}/\textstyle\big\langle f,\frac{\partial
f}{\partial x_1},\dots,\frac{\partial f}{\partial
x_n}\big\rangle.$$
Setting $F= f+\sum_{j=1}^{\tau} t_jg_j$
and
$\kr =  K\{x,t\} / \langle F \rangle$, then 
$K\{t\} \to \kr \to R$
is a semiuniversal (respectively versal) deformation of
$R$.
\end{Corollary}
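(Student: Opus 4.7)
The plan is to obtain Corollary \ref{cor.ICIS} as a direct specialization of Theorem \ref{thm.suICIS} to the hypersurface case $k=1$. There is no separate argument to carry out: every ingredient of the corollary corresponds to an ingredient of the theorem under this substitution, and the only work is to make the identifications explicit. Since Theorem \ref{thm.suICIS} has already been established, there is no substantial obstacle; the only point that requires a small check is that the expression for $T^1_R$ appearing in Theorem \ref{thm.suICIS} specializes to the Tjurina algebra appearing in the corollary.

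First I would verify that $R = K\{x\}/\langle f \rangle$ is an ICIS in the sense of the preceding discussion. The ideal $\langle f \rangle$ is generated by a single element, so setting $k=1$ gives $n - \dim R = 1 = k$, which means that $\langle f \rangle$ is a complete intersection. Together with the hypothesis that $R$ has an isolated singularity, this shows $R$ is an ICIS, so Theorem \ref{thm.suICIS} applies with $(f_1,\ldots,f_k) = (f)$.

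Next I would identify the $T^1_R$ from Theorem \ref{thm.suICIS} with the Tjurina algebra. For $k=1$ the Jacobian map $\big(\tfrac{\partial f_i}{\partial x_j}\big)\colon K\{x\}^n \to K\{x\}^k$ becomes the single-row map $K\{x\}^n \to K\{x\}$ whose image is the Jacobian ideal $\langle \tfrac{\partial f}{\partial x_1},\ldots,\tfrac{\partial f}{\partial x_n}\rangle$. Adding $\langle f\rangle K\{x\}^1 = \langle f \rangle$ yields the ideal $\langle f, \tfrac{\partial f}{\partial x_1},\ldots,\tfrac{\partial f}{\partial x_n}\rangle$, so
$$T^1_R \;=\; K\{x\}\big/\textstyle\big\langle f,\tfrac{\partial f}{\partial x_1},\ldots,\tfrac{\partial f}{\partial x_n}\big\rangle,$$
which is exactly the Tjurina algebra of the corollary. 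Consequently $g_1,\ldots,g_\tau \in K\{x\}$ as chosen in the corollary are valid representatives of a $K$-basis (resp. a generating system) of $T^1_R$ in the sense of Theorem \ref{thm.suICIS}.

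Finally, the construction $F = f + \sum_{j=1}^\tau t_j g_j$ and $\kr = K\{x,t\}/\langle F \rangle$, together with the canonical projection $\kr \to R$ and the inclusion $K\{t\} \hookrightarrow K\{x,t\}$, is precisely the construction given in Theorem \ref{thm.suICIS} in the case $k=1$. Thus $K\{t\} \to \kr \to R$ is a semiuniversal (resp. versal) deformation of $R$, completing the proof.
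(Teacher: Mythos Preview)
Your proposal is correct and matches the paper's approach exactly: the paper states the corollary immediately after Theorem \ref{thm.suICIS} without a separate proof, treating it as the obvious specialization to $k=1$, and then proceeds directly to the proof of Theorem \ref{thm.suICIS}. Your explicit verification that the $T^1_R$ of the theorem becomes the Tjurina algebra when $k=1$ is precisely the (trivial) identification the paper leaves implicit.
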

\begin{proof}[Proof of Theorem \ref{thm.suICIS}]
Let $F=(F_1,\ldots,F_k)$. It is enough to prove the theorem for deformations over the base space $K\{s\}$,  $s=(s_1,\ldots,s_q)$, since a deformation over $K\{s\}/I$ induces a deformation over $K\{s\}$ (this follows, for example, from \cite [Proposition 217]{GLS25}).
Let $G(x,s)=(G_1(x,s),\ldots,G_k(x,s))\in K\{x,s\}^k $ be a deformation of $f=(f_1,\ldots,f_k)$, i.e.
$$G(x,0)=M_0f(x)$$
for an invertible $k\times k$-matrix $M_0\in K\{x\}^{k^2}$. Considering $\tilde G:=M_0^{-1}G$ instead of $G$ we may assume that $M_0$ is the $k\times k$ unit matrix $\Id_k$.\\
 We have to prove that there exists $\phi(s)\in \langle s\rangle K\{s\}$, an automorphism  $\Phi$ of $K\{x,s\}$, $\Phi = (\Phi_1,...,\Phi_n)$, $\Phi_i = \Phi(x_i)$,
with $\Phi_i(x,0) = x_i$,
$\Phi(s_j) = s_j$, and a $k\times k$-matrix $M(x,s)$ with $M(x,0)=\Id_k$,
such that (see e.g. \cite[Proposition 4.3]{GPh19})
\begin{equation}\label{p64}
\Phi(G)=G(\Phi(x,s),s) = MF(x,\phi(s)).
\end{equation}
A solution of order $e$ (in $s$) of (\ref{p64}) is a triple $(\Phi,\phi,M) \in K\{x\}[s]^n \times K[s]^{\tau}  \times K[s]^{k^2}$  such that 
(\ref{p64}) holds mod $\langle s \rangle^{e+1}= \langle s \rangle^{e+1}K\{x,s\}^k$.

Since $\Phi(x,0) =x$, $\phi(0)= 0$, $M(x,0)=\Id_k$  and $G(x,0)=f(x)=F(x,0)$ we obviously have a solution of (\ref{p64}) of order $0$. 
To apply Theorem \ref{grauertapprox}, we have to show that every solution 
$(\Phi,\phi,M)$ of $(\ref{p64})$ of order $e$ in $s$ can be extended to a solution
$(\Phi',\phi',M')$ of $(\ref{p64})$ of order $e+1$. 
Having the solution $(\Phi,\phi,M)$ of order $e$, the difference
$G(\Phi(x,s),s) - MF(x,\phi(s))$ is of order $\ge e+1$ in $s$. The homogeneous part of degree $e+1$ in $s$ of
$G(\Phi(x,s),s) - MF(x,\phi(s))$ can thus be written as
\[
 G(\Phi(x,s),s) - MF(x,\phi(s)) = \sum_{| \nu | = e+1}s^{\nu}h_{\nu}(x) \mod \langle s\rangle^{e+2}
\]
with $h_{\nu} \in K\{x\}^k$. 
By assumption $g_1,...,g_{\tau}$ generate\\ 
$K\{x\}^k\big/\textstyle\Bigl\langle\big(\frac{\partial f_i}{\partial x_j}\big)\cdot K\{x\}^n+\langle
f_1,\dots,f_k\rangle K\{x\}^k\Big\rangle$
and therefore we can write for all $\nu$ 
\[
h_{\nu} =  \sum_{j=1}^{\tau}b_{\nu j}g_j + \sum_{j=1}^nh_{\nu j}
\frac{\partial f}{\partial x_j} + L_\nu f
\]
with $b_{j \nu } \in K$, $h_{\nu j} \in K\{ x\}$ and $L_\nu\in K\{x\}^{k^2}$ a $k\times k$-matrix..
We define 
\begin{align*}
\Phi'_j &= \Phi_j - \sum_{|\nu | = e+1}{}{h_{\nu j}}s^\nu \; \quad
j=1,\ldots,n\\
\phi'_j &= \phi_j + \sum_{|\nu | = e+1}b_{j \nu}s^\nu\;\quad j = 1, \ldots, p,\\
M' &= M +\sum_{|\nu | = e+1}s^\nu L_\nu.
\end{align*}
As $G(x,0)= f$ and $\Phi(x,0)=x$ we obtain 
$$\frac{\partial{(G(\Phi(x,s),s))}}{\partial{x_j}}\equiv \frac{\partial f}{\partial x_j}  \mod \langle s\rangle .$$
Using Taylor's formula  (see Remark \ref{rm.unf}) with $\Phi' = (\Phi'_1,...,\Phi'_n)$ and $\phi' = (\phi'_1,...,\phi'_p)$, we get
\begin{align*}
G(\Phi'(x,s),s)&=  G(\Phi(x,s),s)-\sum_{j=1}^n \frac{\partial G}{\partial x_j}(\Phi(x,s),s)(\sum_{|\nu | = e+1}{}{h_{\nu j}}s^\nu) \mod \langle s\rangle^{e+2}\\
&=  G(\Phi(x,s),s)-\sum_{|\nu | = e+1}(\sum_{j=1}^nh_{\nu j}\frac{\partial f}{\partial x_j} )s^\nu \mod \langle s\rangle^{e+2}.
\end{align*}
On the other hand, by Taylor and the definition of $F$,
\begin{align*}
F(x,\phi'(s))& = F(x,\phi(s)) +\sum_{j=1}^\tau(\frac{\partial F}{\partial t_j}(x,\phi(s))(\sum_{|\nu | = e+1}b_{j \nu}s^\nu) \mod \langle s\rangle^{e+2}\\
&=F(x,\phi(s))+\sum_{|\nu | = e+1}(\sum_{j=1}^\tau b_{j\nu}g_j)s^\nu \mod \langle s\rangle^{e+2}.
\end{align*}
This implies, since $M \mod \langle s\rangle =\Id_k$,
$$M'F(x,\phi'(s))
=(M+\sum_{|\nu | = e+1}s^\nu L_\nu)F(x,\phi'(s))$$
$$=MF(x,\phi(s))+\sum_{|\nu | = e+1}s^\nu L_\nu f +\sum_{|\nu | = e+1}(\sum_{j=1}^\tau b_{j\nu}g_j)s^\nu \mod \langle s\rangle^{e+2}.$$
We obtain
$$G(\Phi'(x),s)-M'F(x,\phi'(s))\equiv 0 \mod \langle s\rangle^{e+2}.$$
Now we can apply Theorem \ref{grauertapprox} to obtain an analytic solution of (\ref{def1}).
This proves that $F$ defines a complete deformation. We omit the proof of the versality of $F$, since it can be proved along the same lines, just with more complicated notations. 
To see that $F$ is semiuniversal if $g_1,\dots,g_{\tau}$ are a
basis of $T^1_R$ we have to show that the tangent map of
$\phi$ is uniquely determined. This follows as in the proof of \cite[Theorem 2.1.16]{GLS25}.
\end{proof}

{\bf Acknowledgement:} The first author would like to thank the Mathematical Institute of the University of Freiburg for its hospitality.

%{\bf Conflict of interest:} On behalf of all authors, the corresponding author states that there is no conflict of interest. 

\bigskip

\noindent Gert-Martin Greuel\\
Department of Mathematics,
Rheinland-Pfälzische Technische Universität\\ Kaiserslautern-Landau (RPTU), Germany\\
Email address: greuel@mathematik.uni-kl.de
\medskip

\noindent Gerhard Pfister\\
Department of Mathematics,
Rheinland-Pfälzische Technische Universität\\ Kaiserslautern-Landau (RPTU), Germany\\
Email address: pfister@mathematik.uni-kl.de

\end{document}